\definecolor{darkblue}{rgb}{0.0, 0.0, 0.8}
\theoremstyle{definition}
\newtheorem{examples}[theorem]{Examples}
\newtheorem{question}[theorem]{Question}
\newcommand{\M}{\mathcal{M}}
\newcommand{\MZp}{\M_{\sim}^{Z,p}}
\newcommand{\dgwz}{\mathrm{GW}_p^Z}
\newcommand{\dgwinf}{\mathrm{GW}_{\infty}^Z}
\newcommand{\dgwRn}{\mathrm{GW}_p^{\mathbb{R}^n}}
\newcommand{\dgw}{\mathrm{GW}_p}
\newcommand{\disp}{\mathrm{dis}_p}
\newcommand{\eccout}{\textnormal{ecc}_{p,X,Y}^{\textnormal{out}}}
\newcommand{\eccoutpX}{\textnormal{ecc}_{p,z_0,X}^{\textnormal{out}}}
\newcommand{\eccoutpY}{\textnormal{ecc}_{p,z_0,Y}^{\textnormal{out}}}
\newcommand{\eccin}{\textnormal{ecc}_{p, X,Y}^{\textnormal{in}}}
\newcommand{\eccinpX}{\textnormal{ecc}_{p,z_0,X}^{\textnormal{in}}}
\newcommand{\size}{\textnormal{size}_{p,z_0}}
\newcommand{\R}{\mathbb{R}}
\newcommand{\define}[1]{\textbf{#1}}
\begin{document}

\title{The Z-Gromov-Wasserstein Distance}

\author{%
\name Martin Bauer \email mbauer2@fsu.edu\\[0.2em]
\name Tom Needham \email tneedham@fsu.edu\\[0.2em]
\name Mao Nishino \email mnishino@fsu.edu\\[0.2em]
\addr Department of Mathematics\\
Florida State University\\
Tallahassee, FL 32306-4510, USA
\AND
\name Facundo M\'{e}moli \email facundo.memoli@rutgers.edu\\[0.2em]
\addr Department of Mathematics\\
Rutgers University\\
Piscataway, NJ 08854-8019, USA
}

\editor{Qiang Liu}

\maketitle

\begin{abstract}
The Gromov-Wasserstein (GW) distance is a powerful tool for comparing metric measure spaces which has found broad applications in data science and machine learning. Driven by the need to analyze data sets whose objects have increasingly complex structure (such as node and edge-attributed graphs), several variants of GW distance have been introduced in the recent literature. With a view toward establishing a general framework for the theory of GW-like distances, this paper considers a vast generalization of the notion of a metric measure space: for an arbitrary metric space $Z$, we define a $Z$-network to be a measure space endowed with a kernel valued in $Z$.  We introduce a method for comparing $Z$-networks by defining a generalization of GW distance, which we refer to as $Z$-Gromov-Wasserstein ($Z$-GW) distance. This construction subsumes many previously known metrics and offers a unified approach to understanding their shared properties. This paper demonstrates that the $Z$-GW distance defines a metric on the space of $Z$-networks which retains desirable properties of $Z$, such as separability, completeness, and geodesicity. Many of these properties were unknown for existing variants of GW distance that fall under our framework. Our focus is on foundational theory, but our results also include computable lower bounds and approximations of the distance which will be useful for practical applications.
\end{abstract}

\begin{keywords}
  Gromov-Wasserstein distance, metric measure spaces, optimal transport, attributed networks, geodesics
\end{keywords}

\newpage
\tableofcontents

\section{Introduction}

Frequently in pure and applied mathematics, one requires a mechanism for measuring the dissimilarity of objects which are a priori incomparable. For example, the Gromov-Hausdorff distance provides such a mechanism for comparing abstract Riemannian manifolds and famously provides a notion of convergence for sequences of manifolds, under which certain geometric features are preserved~\citep{cheeger1997structure,Gromov2006Metric}. The Gromov-Hausdorff distance, in fact, defines a metric on the space of arbitrary compact metric spaces. This is useful in applied areas such as shape analysis and data science, where it is increasingly common to encounter analysis tasks involving ensembles of complex data objects which naturally carry metric structures, such as point clouds or graphs---the ability to compute distances between these objects opens these problems to metric-based techniques \citep{memoli2004comparing,memoli2005theoretical,chazal2009gromov,JMLR:v11:carlsson10a, memoli2012some}. Introducing measures on the metric spaces allows for the application of ideas from the field of optimal transport, leading to the following construction, introduced and studied in~\citep{memoli2007,memoli2011}: given a pair of metric spaces endowed with probability measures $(X,d_X,\mu_X)$ and $(Y,d_Y,\mu_Y)$, the \emph{Gromov-Wasserstein $p$-distance} (for $p \geq 1$) between them is given by the quantity 
\begin{equation}\label{eqn:GW_intro}
\mathrm{GW}_p(X,Y) = \frac{1}{2} \inf_{\pi} \left( \iint_{(X\times Y)^2} |d_X(x,x') - d_Y(y,y')|^p \pi(dx \times dy) \pi(dx' \times dy') \right)^{1/p},
\end{equation}
where the infimum is over probability measures $\pi$ on $X \times Y$ whose left and right marginals are $\mu_X$ and $\mu_Y$, respectively. Intuitively, such a measure describes a probabilistic correspondence between the sets $X$ and $Y$, and the integral in~\eqref{eqn:GW_intro} measures the extent to which such a correspondence distorts the metric structures of the spaces. The optimization problem \eqref{eqn:GW_intro} therefore seeks a probabilistic correspondence which minimizes total metric distortion. It was shown by~\citet{memoli2007} that the Gromov-Wasserstein $p$-distance defines a metric on the space of all triples $(X,d_X,\mu_X)$, considered up to measure-preserving isometries, when $(X,d_X)$ is compact and $\mu_X$ is fully supported (the compactness assumption can be relaxed; see \citealt{memoli2022comparison,sturm2023space}).

The Gromov-Wasserstein (GW) framework has become a popular tool in data science and machine learning---see~\citep{demetci2022scot,chowdhury2021generalized,
chapel2020partial,
xu2020gromov,
chowdhury2020gromov,
alvarez2018gromov}, among many others---due to its flexibility in handling diverse data types, its  robustness to changes in metric or measure structures~\citep[Theorem 5.1]{memoli2011}, and recent advances in scalable and empirically accurate computational schemes for its estimation~\citep{peyre2016gromov,xu2019scalable,chowdhury2021quantized,scetbon2022linear,li2023efficient,vedula2024scalable}. It was observed by~\citet{peyre2016gromov} that the formula for GW distance in~\eqref{eqn:GW_intro} still makes sense when the condition that $d_X$ and $d_Y$ are metrics is relaxed; that is, the formula gives a meaningful comparison between structures of the form $(X,\omega_X,\mu_X)$, where $\omega_X:X \times X \to \R$ is an arbitrary measurable function.  This point of view was later studied formally by \citet{chowdhury2019gromov}, where it was shown that the GW distance defines a metric on the space of such triples $(X,\omega_X,\mu_X)$, considered up to a natural notion of equivalence. This is convenient, for example, when handling graph data sets, where it is natural to represent a graph's structure through its adjacency function, Laplacian, or heat kernel.

Driven by applications to machine learning on increasingly complex data types, several variants of GW distance have been introduced in the literature~\citep{memoli2009spectral,vayer2020fused,woojin-thesis,memoli2021ultrametric,arya2023gromov,yang2023exploiting}. For example, \citet{vayer2020fused} introduces an adaptation of GW distance which is equipped to handle graphs endowed with \emph{node features}; that is, graphs whose nodes are endowed with values in some auxiliary metric space (in fact, this idea goes back further to, at least,~\citealt[Section 5]{chazal2009gromov}, where a similar construction provided variants of both the Gromov-Hausdorff distance and the $p=\infty$ version of GW distance). Each time a new variant of the GW distance is introduced in the literature, its metric properties have been re-established. Reviewing these proofs reveals that they tend to follow a common template, suggesting the existence of a higher-level explanation of their shared properties. 

\begin{figure}
    \centering
    \includegraphics[width=0.85\linewidth]{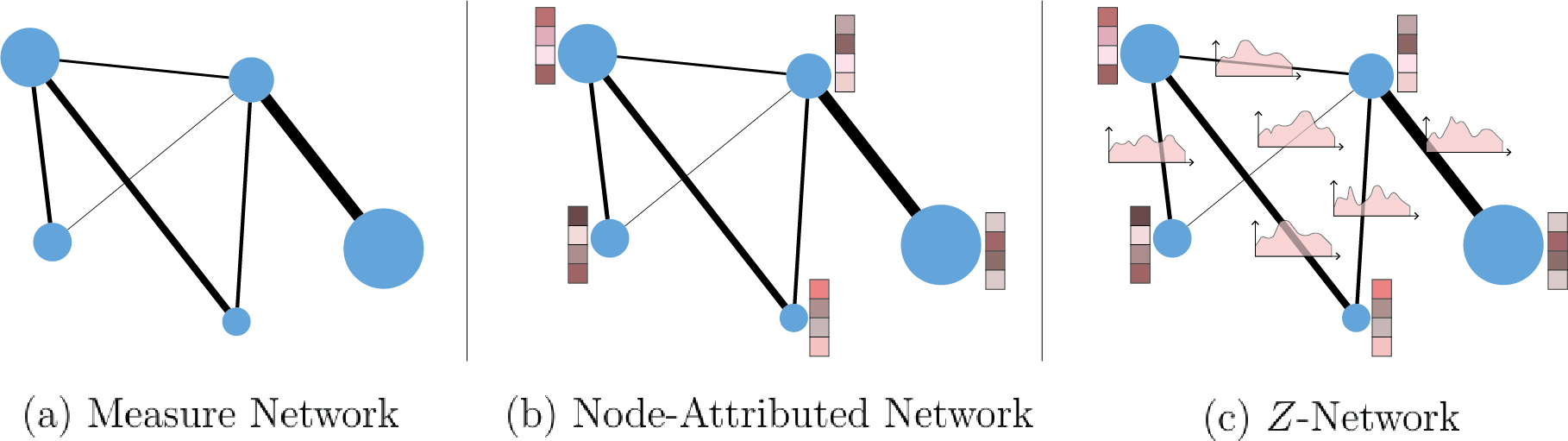}
    \caption{Schematic illustration of types of networks. { (a)} A graph with edge weights and node weights (each visualized by size variations). This structure is encoded as a {measure network}, and two such structures can be compared through the {Gromov-Wasserstein distance} \citep{memoli2007,chowdhury2019gromov}. { (b)} A weighted graph with additional node features, consisting of an assignment of a vector in $\R^n$ to each node (visualized as a column vector). These objects can be compared via the {Fused Gromov-Wasserstein distance} \citep{titouan2019optimal,vayer2020fused}. { (c)} Additionally, a graph can be endowed with edge features, assigning a point in some fixed metric space $Z$ to each edge (here, we visualize a 1-dimensional probability distribution attached to each edge). These complex objects are modeled as {$Z$-networks}, in the language of this paper, and two such objects can be compared through our proposed framework. By choosing an appropriate target space $Z$, one recovers many notions of distance between structured objects that have appeared previously in the literature---see \Cref{tab:metrics}.}
    \label{fig:Z-networkSchematic}
\end{figure}

The main goal of this paper is to formally develop a high-level structure which encompasses the GW variants described in the previous paragraph. This is accomplished by studying triples $(X,\omega_X,\mu_X)$ where $\omega_X:X \times X \to Z$ is now a function valued in some fixed but arbitrary metric space $(Z,d_Z)$; we call such a triple a \emph{$Z$-network} (see \Cref{fig:Z-networkSchematic}). Two $Z$-networks can be compared via a natural generalization of the GW distance~\eqref{eqn:GW_intro}: the \emph{$Z$-Gromov-Wasserstein $p$-distance} between $Z$-networks  $(X,\omega_X,\mu_X)$ and $(Y,\omega_Y,\mu_Y)$ is
\begin{equation}\label{eqn:ZGW_intro}
\mathrm{GW}_p^Z(X,Y) = \frac{1}{2}\inf_{\pi} \left( \iint_{(X\times Y)^2} d_Z\big(\omega_X(x,x'), \omega_Y(y,y')\big)^p \pi(dx \times dy) \pi(dx' \times dy') \right)^{1/p}.
\end{equation}
One should immediately observe that if $Z = \R$ (with its standard metric) then $\mathrm{GW}_p^Z$ recovers $\mathrm{GW}_p(X,Y)$. Moreover, we will show that the GW variants described above correspond to $Z$-GW distances for other appropriate choices of $Z$. The broad goal of the paper is to establish a general theoretical framework for GW-like distances, as understanding geometric properties of this general structure eliminates the need to re-derive the properties for GW variants which fall into our framework. Besides enabling the avoidance of such redundancies for future variants of GW distance, this general perspective leads to novel insights about existing metrics: several of our theoretical results were previously unknown or only shown in weaker forms for distances already studied in the literature.

\subsection{Main Contributions and Outline}

Let us now outline our main results, which are stated here somewhat informally.

\begin{itemize}[leftmargin=*]
    \item We show that several GW-like metrics appearing in the literature can be realized as $Z$-GW distances. In particular, \Cref{thm:ZGW_generalizes} states that the Wasserstein distance, (standard) GW distance~\citep{memoli2007}, ultrametric GW distance~\citep{memoli2021ultrametric}, $(p,q)$-GW distance~\citep{arya2023gromov}, Fused GW distance~\citep{vayer2020fused}, Fused Network GW distance~\citep{yang2023exploiting}, spectral GW distance~\citep{memoli2009spectral} and GW distance between weighted dynamic metric spaces~\citep{woojin-thesis} can all be realized as $Z$-GW distances. 
    We also explain how the graphon cut metric~\citep{borgs2008convergent} fits into our framework. Finally, we show that $Z$-GW distances define natural metrics on spaces of shape graphs~\citep{guo2022statistical}, connection graphs~\citep{robertson2023generalization} and probabilistic metric spaces~\citep{menger1942statistical}. These results are summarized in \Cref{tab:metrics}.
    \item \Cref{thm:ZGW_is_a_metric} says that, when $(Z,d_Z)$ is separable, the $Z$-GW distance $\mathrm{GW}_p^Z$ defines a metric on the space $\MZp$ consisting of $Z$-networks, considered up to a natural notion of equivalence (\Cref{def:weak_isomorphism}). As a consequence, \Cref{cor:fusedGW_triangle} shows that Fused GW and Fused Network GW distances are metrics; these were previously only shown to satisfy a certain \emph{weak triangle inequality}.
    The proof of \Cref{thm:ZGW_is_a_metric} relies on a technical result, which says that the solution of the optimization problem~\eqref{eqn:ZGW_intro} is always realized (\Cref{thm: optimal coupling}).
    \item Several geometric and topological properties of the metric space $(\MZp, \mathrm{GW}_p^Z)$ are established: it is separable (\Cref{prop: separability}), complete if and only if $Z$ is (\Cref{thm:complete}), contractible, regardless of the topology of $Z$ (\Cref{thm:contractible}), and geodesic if $Z$ is (\Cref{thm:geodesic}). Many of these results are novel when restricted to the examples of $Z$-GW distances covered in \Cref{thm:ZGW_generalizes}.
    \item Approximations of $Z$-GW distances are established via certain polynomial-time computable lower bounds in \Cref{thm:lower_bounds}. Moreover, \Cref{thm: approximation_Rn} provides a quantitative approximation of general $Z$-GW distances by $\R^n$-GW distances, the latter of which should be efficiently estimable through mild adaptations of existing GW algorithms. 
\end{itemize}
\begin{table}[t]
\centering
\small
\setlength{\tabcolsep}{3pt}
\renewcommand{\arraystretch}{1.0}

\begin{tabularx}{\linewidth}{@{}
  >{\raggedright\arraybackslash}p{0.22\linewidth}
  >{\raggedright\arraybackslash}p{0.20\linewidth}
  >{\raggedright\arraybackslash}X
  >{\raggedright\arraybackslash}p{0.22\linewidth}
@{}}
\toprule
Metric & Data Type & Target space $Z$ & References\\
\midrule
Wasserstein Distance & Distributions over $(Z,d_Z)$ & $Z$, $d_Z$ (any metric space) & \citet{villani2003topics} \\
Gromov-Wasserstein (GW) Distance & Metric measure spaces & $\mathbb{R}$, standard metric & \citet{memoli2007,memoli2011,chowdhury2019gromov} \\
Ultrametric GW Distance & Ultrametric measure spaces & $\mathbb{R}$, max metric & \citet{memoli2021ultrametric} \\
$(p,q)$-GW Distance & Metric measure spaces & $\mathbb{R}$, $\Lambda_q$ (see Equation \ref{eqn:q_metrics}) & \citet{arya2023gromov} \\
Fused GW Distance & Node-attributed graphs & $\mathbb{R} \times Y$ for a metric space $Y$ & \citet{titouan2019optimal,vayer2020fused} \\
Fused Network GW Distance & Node- and edge-attributed graphs & Product of node and edge spaces & \citet{yang2023exploiting,kawano2024multi} \\
Spectral GW Distance & Riemannian manifolds & Continuous functions, sup metric & \citet{memoli2009spectral} \\
GW Distance for Weighted DMSs & Weighted Dynamic Metric Spaces & Cts. functions, interleaving distance & \citet{woojin-thesis} \\
\addlinespace
\midrule
Shape Graph Distance & Embedded 1-d stratified spaces & Space of curves, any metric & \citet{sukurdeep2022new,srivastava2020advances,bal2024statistical} \\ 
Connection Graph Distance & $\mathrm{O}(n)$-attributed graphs & $\mathrm{O}(n)$,  Frobenius distance & \citet{bhamre2015orthogonal,singer2011angular,robertson2023generalization} \\
Probabilistic Metric Space Distance & Probabilistic metric spaces & Wasserstein space over $\mathbb{R}$ & \citet{menger1942statistical,wald1943statistical,kramosil1975fuzzy} \\
\bottomrule
\end{tabularx}
\caption{Summary of Gromov-Wasserstein-like distances which fall under the framework described in this paper. In each case, we provide the name of the metric, the type of data objects which it is able to compare, the choice $(Z,d_Z)$ which realizes the metric in our framework, and references to where the metric was first studied. The last three distances are novel to this paper, but handle data objects which have been studied in prior work.}
\label{tab:metrics}
\end{table}

The structure of the paper is as follows. After concluding the introduction section with a discussion of related work, the main definitions of the objects of interest are given in \Cref{sec:basic_definitions}. \Cref{sec:examples} describes examples of $Z$-GW distances, including some distances which have already appeared in the literature and some which are novel to the present paper. The basic geometric properties of $Z$-GW distances are established in \Cref{sec: metric_properties}. Finally, our results on approximations of $Z$-GW distances are presented in \Cref{sec:approximations}. The main body of the paper concludes with a discussion of open questions and future research directions in~\Cref{sec:discussion}. Proofs of our main results are included in the main body of the paper, but we relegate proofs of a few technical results to \Cref{sec:appendix}.

\subsection{Related Work}\label{sec:related_work}
 Concepts related to the Z-Gromov-Wasserstein distance have been considered previously, see, e.g., \citep{jain2009structure,peyre2016gromov,yang2023exploiting,kawano2024multi}. We now give precise comparisons of these previous works to the setting of the present article.

 \citet{peyre2016gromov} already considers variants of GW distance between $\R$-networks (i.e., objects of the form $(X,\omega_X,\mu_X)$ with $\omega_X$ valued in $\R$) where the integrand of~\eqref{eqn:GW_intro} is replaced with a more general \emph{loss function} of the form 
 \[
 X \times Y \times X \times Y \ni (x,y,x',y') \mapsto L(\omega_X(x,x'),\omega_Y(y,y')) \in \R,
 \]
 with $L: \R \times \R \to \R$ an arbitrary function. The article places a specific focus on the case where $L$ is the Kullback-Leibler divergence, which is not a metric, and therefore does not fall within our framework. The geometric properties that we establish for $Z$-GW distances in this paper largely depend on the assumption that $(Z,d_Z)$ is a metric space, so we restrict our attention to this setting.

    While we were in the process of preparing the article, \citet{yang2023exploiting} independently introduced the idea of a $Z$-network, although in the slightly less general context that network kernels are assumed to be bounded and continuous. A version of the $Z$-GW distance is also formulated therein, and their framework is applied to the analysis of attributed graph data. The work of~\citet{yang2023exploiting} is primarily geared toward computational applications, while the present paper is focused on developing the mathematical properties of the $Z$-GW distance; all of our results are novel, with connections to previous results clarified as necessary below. We also remark here that the work of Yang et al.\ gives a notion of $Z$-GW distance which appears at first glance to be more general, in that it includes additional terms handling node features and edge weights; we show, however, in \Cref{sec:attributed_graphs} that these terms are, in fact, not necessary. A similar framework to that of Yang et al.\ was also recently studied by~\citet{kawano2024multi}, where the focus was restricted to $\R^n$-valued kernels on finite sets, also with a view toward applications to attributed graph data sets. In summary, \cite{yang2023exploiting} and \cite{kawano2024multi} focus on extending the notion of fused Gromov-Wasserstein distance to do machine learning on attributed networks, whereas our goal is to develop a unifying theory on a more general class of GW-like distances.
    
    The first work to consider structures similar to $Z$-networks, to our knowledge, is that of \citet{jain2009structure}, where the objects under consideration are kernels over a finite set of fixed size $n$, valued in a fixed metric space. These structures are compared using a distance similar to the $Z$-GW distance, with a key difference being that the formulation does not involve measures: the optimization is performed over the conjugation action by the permutation group on $n$ letters, rather than on the space of couplings. Kernels on sets of different cardinalities are compared by ``padding" the smaller kernel with some fixed value to bring them to the same size---e.g., if the target metric space is a vector space, then the kernel is padded with zero vectors. The formulations of \citet{yang2023exploiting,kawano2024multi} and the present paper can then be viewed as OT-based extensions of the ideas of Jain and Obermayer, which are able to avoid the arguably unnatural padding operation.

    Another recent approach to a general framework for Gromov-Wasserstein-like distances is developed by~\citet{zhang2024geometry}. In that paper, the generalization is given by allowing optimization over spaces of measure couplings with a certain additional structure, rather than varying the target of the kernel function. That framework encompasses a different collection of variants of GW distances (namely, those of the form presented by~\citealt{chowdhury2023hypergraph,titouan2020co}) than those considered here. Moreover,~\citealt{zhang2024geometry} is mainly focused on curvature bounds and on a computational framework, rather than the structural results presented in this paper.

\section{\texorpdfstring{$Z$}{Z}-Gromov-Wasserstein Distances}\label{sec:basic_definitions}

We now present the main definitions and constructions of the paper.

\subsection{Background}

We begin by recalling some background terminology and notation on various notions of distance between probability measures. 

\subsubsection{Basic Terminology and Notation}\label{sec:basic_terminology}

We emphasize here that the term \define{metric space} is reserved for a pair $(Z,d_Z)$, where $d_Z:Z \times Z \to \R$ satisfies the usual axioms of symmetry, positive-definiteness and the triangle inequality. If $d_Z(z,z') = 0$ for some $z \neq z'$ (i.e., $d_Z$ does not satisfy the positive-definiteness axiom), then we refer to $d_Z$ as a \define{pseudometric} and $(Z,d_Z)$ as a \define{pseudometric space}. When the existence of a preferred (pseudo)metric is clear from context, we may abuse notation and use only $Z$ to denote the metric space. 

Recall that a topological space $Z$ is called a \define{Polish space} if it is completely metrizable and separable (i.e., contains a countable dense subset). When metrized by a particular complete metric $d_Z$, we call $(Z,d_Z)$ a \define{Polish metric space}. A standard assumption for many results in abstract measure theory is that the underlying space is Polish---for example, see \citep{srivastava2008course}.

Let $(X,\mu)$ be a measure space, $Y$ a measurable space and $\phi:X \to Y$ a measurable map. The \define{pushforward measure} on $Y$ is denoted $\phi_\ast \mu$. We recall that this is defined on a measurable subset $A \subset Y$ by $\phi_\ast \mu(A) = \mu(\phi^{-1}(A))$. 

Given a point $z \in Z$, the associated \define{Dirac measure} is the Borel measure $\delta_z$ on $Z$ defined on a Borel set $A \subset Z$ by  
\[
\delta_z(A) = \left\{\begin{array}{lr}
1 & \mbox{if $z \in A$}\\
0 & \mbox{if $z \not \in A$.}
\end{array}\right.
\]
Dirac measures will appear in several constructions below (e.g., \Cref{sec:probabilistic_metric_spaces}, the proofs of \Cref{prop:dense_subset} and \Cref{thm:complete}, etc.).

\subsubsection{Optimal Transport Distances}

Our constructions are largely inspired by concepts from the field of optimal transport (OT), which studies certain metrics on spaces of probability distributions, called \emph{Wasserstein distances}. The following concepts are well-established---see~\citep{villani2003topics} as a general reference on OT.

\begin{definition}[Coupling]
    Let $(X,\mu)$ and $(Y,\nu)$ be probability spaces. A \define{coupling} of $\mu$ and $\nu$ is a probability measure $\pi$ on $X \times Y$ with marginals $\mu$ and $\nu$, respectively. We denote the set of couplings between $\mu$ and $\nu$ as $\mathcal{C}(\mu,\nu)$.
\end{definition}

\begin{definition}[Wasserstein Distance]\label{def:wasserstein}
    Let $(Z,d_Z)$ be a Polish metric space and let $\mu$ and $\nu$ be Borel probability measures on $Z$ with finite $p$-th moments, for $p \in [1,\infty]$. The \define{Wasserstein $p$-distance} between $\mu$ and $\nu$ is
    \[
        \mathrm{W}_p(\mu,\nu) = \inf_{\pi \in \mathcal{C}(\mu,\nu)} \|d_Z\|_{L^p(\pi)}.
    \]
    This is written more explicitly, for $p < \infty$, as
    \[
    \mathrm{W}_p(\mu,\nu) =  \inf_{\pi \in \mathcal{C}(\mu,\nu)} \left(\int_{Z \times Z} d_Z(x,y)^p \pi(dx \times dy)\right)^{\frac{1}{p}},
    \]
    and the integral expression is replaced by an essential supremum in the $p = \infty$ case.
\end{definition}
The classical notion of Wasserstein distance between probability measures on the \emph{same} metric space can be adapted to compare probability measures on \emph{distinct} metric spaces---this was the setting of the initial work on \emph{Gromov-Wasserstein} distances~\citep{memoli2007,memoli2011}. The concept extends further to define metrics on the space of more general objects, called \emph{measure networks}, as was first formalized in~\citep{chowdhury2019gromov}. We recall the relevant definitions below.

\begin{definition}[Measure Network]\label{def:measure_network}
    A \define{measure network} is a triple $(X,\omega_X,\mu_X)$ such that $X$ is a Polish space, $\mu_X$ is a Borel probability measure and
    \[
        \omega_X:X \times X \to \R
    \]
    is a measurable function which we refer to as the \define{network kernel}. We frequently abuse notation and write $X$ in place of $(X,\omega_X,\mu_X)$, when the existence of a preferred network kernel and measure are clear from context.
\end{definition}

\begin{examples}
    Two of the most common sources of examples of measure networks are:
    \begin{enumerate}
        \item {\bf Metric measure spaces.} A \define{metric measure space} is a measure network such that the network kernel is a metric (which induces the given topology on $X$).
        \item {\bf Graph representations.} Given a (possibly weighted) graph, one can construct a measure network representation by taking $X$ to be the vertex set, $\mu_X$ some probability measure on the vertices (e.g., uniform or degree-weighted), and $\omega_X$ a graph kernel which encodes connectivity information. Natural choices of graph kernel include a (weighted) adjacency function~\citep{xu2019scalable} or a graph heat kernel~\citep{chowdhury2021generalized}.
    \end{enumerate}
\end{examples}

Measure networks can be compared via the pseudometric defined by~\citet{memoli2007,chowdhury2019gromov}, which we now recall.

\begin{definition}[Gromov-Wasserstein $p$-Distance]\label{def:gromov_wasserstein}
    Consider measure networks $ (X,\omega_X,\mu_X)$ and $ (Y,\omega_Y,\mu_Y)$, and let $p \in [1,\infty]$. The \define{$p$-distortion} of a coupling $\pi \in \mathcal{C}(\mu_X,\mu_Y)$ is
    \[
    \mathrm{dis}_p(\pi) = \|\omega_X - \omega_Y\|_{L^p(\pi \otimes \pi)},
    \]
    where $\pi \otimes \pi$ is the product measure on $(X \times Y)^2$, and where we consider $(x,y,x',y') \mapsto \omega_X(x,x') - \omega_Y(y,y')$ as a function on this space. Explicitly, for $p < \infty$ this is given by
    \[
        \mathrm{dis}_p(\pi) = \left(\int_{X \times Y} \int_{X \times Y} |\omega_X(x,x') - \omega_Y(y,y')|^p \pi(dx \times dy) \pi(dx' \times dy')\right)^{1/p},
    \]
    and for $p = \infty$ it is given by the essential supremum
    \[
    \mathrm{dis}_\infty(\pi) = \mathrm{esssup}_{\pi \otimes \pi} \; |\omega_X - \omega_Y| =  \sup_{(x,y),(x',y') \in \mathrm{supp}(\pi)} \; |\omega_X(x,x') - \omega_Y(y,y')|,
    \]
    where $\mathrm{supp}(\pi)$ denotes the support of the measure. Note that we suppress the dependence on $X$ and $Y$ from our notation for the distortion function. 
    The \define{Gromov-Wasserstein (GW) $p$-distance} between $X$ and $Y$ is
    \[
        \dgw (X,Y) = \frac{1}{2} \inf_{\pi \in \mathcal{C}(\mu_X,\mu_Y)} \disp (\pi).
    \]
\end{definition}

In this article we will introduce a generalization of the GW distance and when clarity is needed, we refer to $\dgw$ as the \define{\emph{standard} Gromov-Wasserstein (GW) $p$-distance}. The standard GW $p$-distance is finite when restricted to the space of measure networks whose network kernels have finite $p$-th moment. On this subspace, it defines a pseudometric whose distance-zero equivalence classes can be precisely characterized: see \citep[Theorem 2.4]{chowdhury2019gromov} or the discussion in \Cref{sec: metric_properties} below.

\subsubsection{Metric Space-Valued \texorpdfstring{$L^p$}{Lp}-Spaces}\label{sec:metric_space_valued_Lp}

The main object of study in this paper generalizes the above definitions to the case where the network kernel $\omega$ takes values in a general metric space. To ensure that the distance is finite, we will assume that the network kernel is an $L^p$ function, in the sense of \citet[Section 1.1]{Korevaar1993SobolevSA}. This parallels the definition of the Wasserstein distance, where we assume that the measures have finite $p$-th moments. 

We will now recall the definition of $L^p$ functions valued in a metric space. For measurable functions $f$ and $g$, the natural definition of a distance between $f$ and $g$ is the integral of $d(f(x),g(x))$, but this function is not always measurable due to Nedoma's pathology \citep[Section 15.9]{SchillingKuhn2021}. However, if we assume that the target metric space is separable, the distance is measurable, and the integral is well-defined. With this in mind, we present the following definition, from~\citet[Section 1.1]{Korevaar1993SobolevSA}.

\begin{definition}[{Metric Space-Valued $L^p$ Spaces}]
\label{def:metric_lp_spaces}
Let $X$ be a measure space equipped with a measure $\mu_X$, and suppose $(Y,d_Y)$ is a separable metric space. Fix $y_0\in Y$. We define the \define{space of $L^p$-functions $X \to Y$} by
    \begin{equation} 
        L^p(X,\mu_X; Y) = \left\{f:X\to Y \middle| \int_{X}d_Y(f(x),y_0)^p\mu_X(dx)<\infty \right\}
    \end{equation}
for $p \in [1,\infty)$; similarly, for $p = \infty$, the space is defined by
\begin{equation} 
    L^\infty(X,\mu_X; Y) = \left\{f:X \to Y \middle| 
\inf \{C \geq 0 \mid d_Y(f(x),y_0) \leq C \mbox{ for $\mu_X$-a.e. $x \in X$} \} < \infty \right\}.
\end{equation}
These can be expressed somewhat more concisely, for all $p \in [1,\infty]$, as 
\[
L^p(X,\mu_X;Y) = \{f:X \to Y \mid \|d_Y(f(\cdot),y_0)\|_{L^p(\mu_X)} < \infty\}.
\]
The $L^p$ space is then equipped with the distance
    \[
        D_p(f,g)         = \|d_Y(f(x),g(x))\|_{L^p(\mu_X)} = \left\{\begin{array}{cl}
        \left(\int_{X}d_Y(f(x),g(x))^p\mu_X(dx)\right)^{1/p} & 1\leq p<\infty \\
     \textnormal{esssup}_{\mu_X}d_Y(f(x),g(x)) & p=\infty. \end{array}\right.
   \]
\end{definition}

\begin{remark}
    If $f \in L^p(X,\mu_X;Y)$, then $\|d_Y(f(\cdot),y)\|_{L^p(\mu_X)} < \infty$ holds for every $y \in Y$. This is an easy consequence of the triangle inequalities of $d_Y$ and the $L^p(\mu_X)$-norm. Therefore, the above definition of $L^p$ spaces is independent of the choice of $y_0$.
\end{remark}

The distance $D_p$ is a pseudometric which assigns distance zero to functions that agree almost everywhere. We abuse notation and consider $L^p(X,\mu_X; Y)$ as a metric space by implicitly identifying functions that differ only on a measure zero set. The following properties of $L^p$ spaces will be useful later to establish various properties of the metrics introduced in this paper---see \Cref{prop: separability}, \Cref{prop:dense_subset} and \Cref{thm:complete}. The proof is provided in \Cref{app:lp complete_separable}. 

\begin{proposition}
    \label{prop: lp complete_separable}
    Let $X$ be a measure space equipped with a measure $\mu_X$, and suppose $(Y,d_Y)$ is a separable metric space. Then:
    \begin{enumerate}
        \item $L^p(X,\mu_X; Y)$ is a complete metric space if $Y$ is complete.
        \item $L^p(X,\mu_X; Y)$ is separable. \label{prop: lp_separable}
        \item Any $f\in L^p([0,1]^d,\mathscr{L}^d;Y)$ can be approximated by piecewise constant functions that are constant on a grid of a fixed step size. Here, $\mathscr{L}^d$ is the Lebesgue measure on~$[0,1]^d$. \label{prop: lp_grid_approx}
    \end{enumerate}
\end{proposition}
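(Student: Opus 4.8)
The plan is to treat the three parts in turn, reducing each to a standard real-variable argument adapted to the metric-space-valued setting. Throughout I will use that $Y$ is separable, so that $x \mapsto d_Y(f(x),g(x))$ is measurable and $D_p$ is well defined, and that (in the setting where these spaces arise) $X$ carries a Borel probability measure on a Polish space, so that the measure algebra of $(X,\mu_X)$ is separable. For completeness (part 1, first for $p < \infty$) I would run the usual Riesz--Fischer argument. Given a Cauchy sequence $(f_n)$ in $L^p(X,\mu_X;Y)$, extract a subsequence $(f_{n_k})$ with $D_p(f_{n_k},f_{n_{k+1}}) \le 2^{-k}$ and set $g(x) = \sum_k d_Y(f_{n_k}(x),f_{n_{k+1}}(x))$. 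The triangle inequality for the $L^p(\mu_X)$-norm gives $\|g\|_{L^p(\mu_X)} \le \sum_k 2^{-k} < \infty$, so $g(x) < \infty$ for $\mu_X$-a.e.\ $x$; at each such $x$ the sequence $(f_{n_k}(x))$ is Cauchy in $Y$, hence convergent since $Y$ is complete, and I define $f(x)$ to be its limit (and $f(x) = y_0$ on the null set where $g = \infty$). The pointwise a.e.\ limit of measurable maps into a metric space is measurable, so $f$ is measurable; the bound $d_Y(f_{n_k}(x),f(x)) \le \sum_{j \ge k} d_Y(f_{n_j}(x),f_{n_{j+1}}(x)) \le g(x)$ together with dominated convergence then yields $f \in L^p(X,\mu_X;Y)$ and $D_p(f_{n_k},f) \to 0$; since a Cauchy sequence with a convergent subsequence converges to the same limit, $f_n \to f$. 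The $p = \infty$ case is analogous, replacing the dominating series by the countable union of the null sets $\{x : d_Y(f_n(x),f_m(x)) > D_\infty(f_n,f_m)\}$, off of which $(f_n(x))$ is uniformly Cauchy.

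For separability (part 2, for $p < \infty$) the strategy is to exhibit an explicit countable dense family of \emph{rational simple functions}. Fix a countable dense set $\{y_i\} \subseteq Y$ and a countable subalgebra $\{A_j\}$ of measurable sets that is dense in the measure algebra of $(X,\mu_X)$ (available because the Borel $\sigma$-algebra of a Polish space is countably generated). I would first show that finitely-valued simple functions are dense: covering $Y$ by countably many balls of radius $\eps$ centered at the $y_i$ and mapping each resulting Borel piece to its center produces a countably-valued approximant $f_\eps$ with $d_Y(f,f_\eps) \le \eps$ pointwise, and truncating to the finitely many pieces carrying all but $\eps$ of the mass of $d_Y(f,y_0)^p$ (controlled by dominated convergence) yields a finitely-valued approximant. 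Then I approximate the level sets of such a simple function by sets from $\{A_j\}$: since its finitely many values lie in a set of finite diameter $M$, replacing a level set $E$ by a close $A_j$ changes the function only on $E \triangle A_j$ and contributes at most $M \cdot \mu_X(E \triangle A_j)^{1/p}$ to $D_p$, which can be made arbitrarily small. The resulting functions, with values among the $\{y_i\}$ and level sets among the $\{A_j\}$, form a countable dense set.

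Part 3 is a specialization of this construction to $X = [0,1]^d$ with Lebesgue measure, where the natural countable algebra is provided by finite unions of grid cubes of a fixed step size. By regularity of Lebesgue measure, every measurable set is approximated in symmetric difference by a finite union of grid cubes once the step size is small enough; applying this to the level sets of a finitely-valued simple approximant of $f$, and again using that the finite value set has diameter $M$ to bound the error on the small symmetric-difference set by $M \cdot \delta^{1/p}$, produces a function that is constant on each cube of a fixed grid and approximates $f$ in $D_p$.

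The main obstacle, and the place where the hypotheses are genuinely used, is part 2: separability of $L^p(X,\mu_X;Y)$ fails for arbitrary measure spaces and for $p = \infty$, so the argument must invoke both separability of $Y$ (to discretize the target) and separability of the measure algebra of $(X,\mu_X)$ (to discretize the domain), the latter holding because $X$ is Polish with a Borel probability measure. The technical care needed there---controlling the $L^p$-tail when passing from countably-valued to finitely-valued simple functions, and bounding the error incurred by replacing level sets---also furnishes exactly the estimates reused in part 3. The completeness argument in part 1, by contrast, is a routine transcription of Riesz--Fischer whose only genuinely non-scalar subtlety is the measurability of the pointwise limit.
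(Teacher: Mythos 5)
Your proposal is correct, but for parts 2 and 3 it takes a genuinely different route from the paper. For completeness the paper simply cites Korevaar--Schoen, whose argument is the Riesz--Fischer transcription you wrote out, so there is no real difference there. For separability, however, the paper does not build an explicit dense set: it isometrically embeds $Y$ into the separable Banach space $C([0,1])$ via the Banach--Mazur theorem, uses the Pettis measurability theorem to identify $L^p(X,\mu_X;Y)$ with a subset of the Lebesgue--Bochner space $L^p(X,\mu_X;C([0,1]))$, and then invokes the standard separability of that Bochner space. Your construction via rational simple functions (countable dense target set plus countable dense subalgebra of the measure algebra) is more elementary and self-contained, and it has the advantage of directly producing the simple-function density that you then reuse in part 3; the paper's embedding argument is shorter but yields no explicit dense family, which is why the paper later proves the density of $n$-point networks separately. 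For part 3 the paper argues via Lusin's theorem and uniform continuity on a compact set (plus an explicit truncation for unbounded $f$), whereas you approximate level sets of simple functions by unions of grid cubes using regularity of Lebesgue measure; both are standard and correct, though you should make explicit that the finitely many dyadic approximants can be refined to a single common grid. Finally, you are right to flag that separability genuinely requires the measure algebra of $(X,\mu_X)$ to be separable and fails for $p=\infty$; the paper's proof silently assumes the former (``since $X$ is separable'') and its cited Bochner-space result only covers $p<\infty$, so your explicit caveats are, if anything, more careful than the original.
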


\begin{remark}
    \Cref{prop: lp_separable} and \Cref{prop: lp_grid_approx} in \Cref{prop: lp complete_separable} are classical results when $Y$ is a Banach space. However, to the best of our knowledge, the case when $Y$ is a general separable metric space has not been studied in the prior literature.
\end{remark}

\subsection{\texorpdfstring{$Z$}{Z}-Valued Measure Networks}

In this subsection, we introduce our main objects of study, which generalize the notion of a measure network (\Cref{def:measure_network}) and an extension of GW distance for comparing them. The basic idea is to allow network kernels to take values in some fixed, but arbitrary metric space. Similar ideas were considered recently in the machine learning literature~\citep{yang2023exploiting,kawano2024multi}, with applications to analysis of attributed graph data sets, but the idea goes back at least to~\citet{jain2009structure}---see the discussion in Section~\ref{sec:related_work}. The definitions given below generalize those which have appeared in the previous literature.

\subsubsection{Main Definitions}
Throughout this section, and much of the rest of the paper, we fix a complete and separable metric space $(Z,d_Z)$, which we frequently refer to only as $Z$. 

\begin{definition}[$Z$-Network]
    A \define{$Z$-valued $p$-measure network} is a triple $(X,\omega_X,\mu_X)$, where $X$ is a Polish space, $\mu_X$ is a Borel probability measure and
    \[
        \omega_X:X \times X \to Z
    \]
    is an element of $L^p(X\times X, \mu_X\otimes \mu_X; Z)$. We refer to $\omega_X$ as a \define{network kernel}, or \define{$Z$-valued $p$-network kernel}, when additional clarity is necessary. We frequently abuse notation and write $X$ in place of $(X,\omega_X,\mu_X)$. For short, we refer to $X$ as a \define{$(Z,p)$-network}; if the particular value of $p$ is not important to the discussion, we omit $p$ and refer to $X$ as a \define{$Z$-network}.
\end{definition}

We now introduce a notion of distance between $Z$-networks.

\begin{definition}[Gromov-Wasserstein Distance for $(Z,p)$-Networks]\label{def:ZGW_distance}
     Let $p \in [1,\infty]$ and let $X = (X,\omega_X,\mu_X)$ and $Y = (Y,\omega_Y,\mu_Y)$ be $Z$-valued $p$-networks. The associated \define{$p$-distortion} of a coupling $\pi \in \mathcal{C}(\mu_X,\mu_Y)$ is
    \[
        \mathrm{dis}^Z_p(\pi) = \|d_Z \circ (\omega_X \times \omega_Y) \|_{L^p(\pi \otimes \pi)}.
    \]
    Explicitly, for $p < \infty$,
    \[
        \mathrm{dis}^Z_p(\pi) = \left(\int_{X \times Y} \int_{X \times Y} d_Z(\omega_X(x,x'),\omega_Y(y,y'))^p \pi(dx \times dy) \pi(dx' \times dy')\right)^{1/p}
    \]
    and, for $p = \infty$,
    \[
        \mathrm{dis}^Z_\infty(\pi) = \mathrm{esssup}_{\pi \otimes \pi} d_Z \circ (\omega_X \times \omega_Y).
    \]
    The \define{$Z$-Gromov-Wasserstein ($Z$-GW) $p$-distance} between $X$ and $Y$ is
    \[
        \dgwz (X,Y) = \frac{1}{2} \inf_{\pi \in \mathcal{C}(\mu_X,\mu_Y)} \disp^Z (\pi).
    \]
    When the metric on $Z$ needs to be emphasized, we write this as $\mathrm{GW}_p^{(Z,d_Z)}$. 
\end{definition}

Important examples of $Z$-networks are provided below in \Cref{sec:examples}. Sections \ref{sec: metric_properties} and \ref{sec:approximations} will be dedicated to rigorously developing and unveiling the fundamental properties of the $Z$-GW distance.

\section{Examples of \texorpdfstring{$Z$}{Z}-Networks and \texorpdfstring{$Z$}{Z}-Gromov-Wasserstein Distances}\label{sec:examples}

We show in \Cref{sec: metric_properties} below that the $Z$-Gromov-Wasserstein distance from \Cref{def:ZGW_distance} induces a metric on a certain quotient of the space of $Z$-networks and establish some of its geometric properties. To motivate this, we first provide many examples of $Z$-GW distances; these include metrics which have already appeared in the literature (\Cref{sec:existing_ZGW}) and other natural metrics that appear to be novel (\Cref{sec:other_examples}). An important takeaway message is that it is not always necessary to re-establish metric properties of variants of GW distances from scratch---a practice which is common in the recent literature---since these properties frequently follow immediately from high-level principles. 

\subsection{Examples of Existing \texorpdfstring{$Z$}{Z}-Gromov-Wasserstein Distances in the Literature}\label{sec:existing_ZGW}
In this subsection, we show that the $Z$-GW distance, as defined above, generalizes several optimal transport distances which have previously appeared in the literature. Our main results are summarized in the following theorem (see also \Cref{tab:metrics}):
\begin{theorem}\label{thm:ZGW_generalizes}
    For appropriate choices of $Z$, the following distances can be realized as $Z$-Gromov-Wasserstein distances: Wasserstein distance, standard GW distance~\citep{memoli2007}, ultrametric GW distance~\citep{memoli2021ultrametric}, $(p,q)$-GW distance~\citep{arya2023gromov}, Fused GW distance~\citep{vayer2020fused}, Fused Network GW distance~\citep{yang2023exploiting}, spectral GW distance~\citep{memoli2009spectral}, and GW distance between weighted dynamic metric spaces~\citep{woojin-thesis}.
\end{theorem}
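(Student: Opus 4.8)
The plan is to verify \Cref{thm:ZGW_generalizes} by treating it as a collection of independent unfolding arguments, one per named distance, rather than a single unified computation. In each case I would (i) recall the ambient space of objects and the defining formula of the distance as it appears in its source paper, (ii) specify a target metric space $(Z,d_Z)$ and a recipe that turns each object of that type into a $(Z,p)$-network $(X,\omega_X,\mu_X)$, and (iii) show that, under this recipe, the integrand $d_Z(\omega_X(x,x'),\omega_Y(y,y'))^p$ appearing in $\disp^Z(\pi)$ coincides pointwise with the integrand in the original definition, so that the two infima over $\mathcal{C}(\mu_X,\mu_Y)$ agree term-by-term and hence the distances agree. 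The conceptual content is almost entirely in choosing $Z$; once $Z$ is chosen, matching the integrands is a direct substitution.

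Concretely, I would proceed through the list following \Cref{tab:metrics}. For the \emph{Wasserstein distance} the kernel is trivial---one takes $\omega_X\equiv$ (a constant encoding the point, or more precisely treats $X$ itself as valued in $Z$ with a degenerate kernel)---and I would check that the double integral collapses to the single integral defining $\mathrm{W}_p$; this is the most delicate of the ``easy'' cases since it requires reconciling the two-variable kernel formalism with a one-variable transport problem. For \emph{standard GW} one takes $Z=\R$ with the standard metric and the identity $|d_X(x,x')-d_Y(y,y')| = |a-b|$ with $a=\omega_X(x,x')$, $b=\omega_Y(y,y')$ gives the claim immediately. For \emph{ultrametric GW} take $Z=\R$ with the metric induced so that distortion uses the max; for \emph{$(p,q)$-GW} take $Z=\R$ with the metric $\Lambda_q$ from \eqref{eqn:q_metrics}. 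For \emph{Fused GW} take $Z=\R\times Y$ with a suitable product metric and encode the node features into the diagonal entries of the kernel, $\omega_X(x,x) = (\text{feature of }x)$ and off-diagonal entries recording edge weights, then verify the fused objective splits into the standard GW term plus the feature-transport term exactly as in \cite{vayer2020fused}. For \emph{Fused Network GW}, \emph{spectral GW}, and \emph{GW for weighted DMSs} I would use, respectively, a product of node and edge spaces, a space of continuous functions under the sup metric, and a space of continuous functions under the interleaving distance, in each case reading off $d_Z$ from the source paper's loss.

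The main obstacle I anticipate is not any single matching identity but the uniform handling of two recurring subtleties. First, several of the target spaces ($Z=\R$ with $\Lambda_q$, the sup-metric function space, the interleaving-distance space) must be checked to be \emph{separable} (and, for the metric and geometric theorems cited elsewhere, complete), since \Cref{def:metric_lp_spaces} and all of \Cref{def:ZGW_distance} presuppose a separable $Z$ for measurability of $d_Z\circ(\omega_X\times\omega_Y)$; verifying separability of the interleaving-distance space and confirming $\Lambda_q$ is a genuine metric are the least routine points. Second, for the ``fused'' distances I must confirm that encoding node features via the diagonal of the kernel interacts correctly with the coupling $\pi$: the feature term in the original objective is a single integral against $\pi$, whereas the $Z$-GW objective is a double integral against $\pi\otimes\pi$, so I would isolate the diagonal contribution and show the double integral restricted to the ``diagonal-pairing'' reduces to the desired single integral (using that $\pi\otimes\pi$ restricted to matched diagonal blocks reproduces $\pi$), while the off-diagonal contributions reproduce the structural GW term. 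Establishing this decomposition cleanly---and in a form general enough to cover Fused Network GW with both node and edge attributes---is where I would spend the bulk of the effort; the remaining cases are then essentially bookkeeping.
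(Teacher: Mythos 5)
Your overall strategy---treating the theorem case by case, choosing $(Z,d_Z)$ per \Cref{tab:metrics}, and matching integrands pointwise---is exactly the paper's approach, and your treatments of the standard, ultrametric, $(p,q)$, and weighted-DMS cases line up with Propositions \ref{prop:GWasZGW}, \ref{prop:uGWasZGW}, \ref{prop:pqGWasZGW}, and \ref{prop:DMS} (for the Wasserstein case the paper takes $\omega$ to be projection onto the first coordinate and rescales $d_Z$ by $2$, which is the precise version of your ``degenerate kernel''). However, two of your cases contain genuine gaps. First, for the fused distances your plan to encode node features on the \emph{diagonal} of the kernel and then ``isolate the diagonal contribution'' of the double integral cannot work: the diagonal pairing $\{(x,y,x,y)\}$ has $\pi\otimes\pi$-measure zero for non-atomic couplings, so the diagonal entries of the kernel contribute nothing to $\disp^Z(\pi)$. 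Moreover, the fused objective \eqref{eqn:fused_network_GW} does not split into a structural GW term plus a feature-transport term, because the sum of the three terms sits inside a $(\cdot)^{p/q}$ power. The paper's construction (\Cref{prop:FNGW_equals_ZGW}) avoids both problems by recording the node feature as a coordinate of the kernel at \emph{every} pair, $\overline{\omega}_X(x,x') = (\psi_X(x),\omega_X(x,x'),\phi_X(x,x'))$ with $Z=\Psi\times\Omega\times\R$ carrying a weighted $\ell^q$ metric, after which the $Z$-GW integrand equals the FNGW integrand identically and no decomposition is needed.

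Second, your spectral GW case misses a structural obstruction: the spectral GW distance of \cite{memoli2009spectral} is $\frac{1}{2}\inf_\pi\sup_{t>0}c^2(t)\|\Gamma^{\mathrm{spec}}_{M,N,t}\|_{L^p(\pi\otimes\pi)}$, with the supremum over $t$ \emph{outside} the $L^p(\pi\otimes\pi)$ norm, whereas any $Z$-GW distance with $Z$ a function space under the sup metric necessarily produces $\inf_\pi\|\sup_{t>0}c^2(t)\Gamma^{\mathrm{spec}}_{M,N,t}\|_{L^p(\pi\otimes\pi)}$, with the supremum \emph{inside}. These are not equal in general; the paper (\Cref{prop:specGWasZGW}) only realizes the latter variant $\widetilde{\mathrm{GW}}_p^\mathrm{spec}$ as a $Z$-GW distance and proves, via a generalized Minkowski inequality, that it upper-bounds the original. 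So ``reading off $d_Z$ from the source paper's loss'' does not recover the original spectral GW distance, and your proof of that case would fail as stated. Your attention to separability of the target spaces is well placed (it is needed for measurability of $d_Z\circ(\omega_X\times\omega_Y)$), though the paper itself leaves those verifications implicit.
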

The definitions of these optimal transport-type distances are provided below, as necessary, and the proof of the theorem is split  among Propositions \ref{prop:GWasZGW}, \ref{prop:WasZGW}, \ref{prop:uGWasZGW}, \ref{prop:pqGWasZGW}, \ref{prop:FNGW_equals_ZGW}, \ref{prop:specGWasZGW}, and \ref{prop:DMS}, which treat the various metrics independently.

\subsubsection{Wasserstein Distance and Gromov-Wasserstein Distances}
The first result is obvious from the definitions. 
\begin{proposition}\label{prop:GWasZGW}
    The standard Gromov-Wasserstein distance between measure networks (\Cref{def:gromov_wasserstein}) is a $Z$-GW distance with $(Z,d_Z)$ equal to $\R$ with its standard metric. 
\end{proposition}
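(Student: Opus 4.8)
The plan is to verify that, when $(Z,d_Z)$ is $\R$ with its standard metric, both the underlying object class and the distortion functional of the $Z$-GW construction coincide \emph{verbatim} with those of the standard GW distance, so that the two notions of distance agree for every pair of inputs.

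First I would record that $\R$ with the standard metric $d_{\R}(a,b)=|a-b|$ is a complete separable metric space, hence a legitimate target $Z$ for the construction of \Cref{def:ZGW_distance}. Next I would match the object classes: an $(\R,p)$-network is a triple $(X,\omega_X,\mu_X)$ with $\omega_X\in L^p(X\times X,\mu_X\otimes\mu_X;\R)$, and for $Z=\R$ this metric-space-valued $L^p$ space (\Cref{def:metric_lp_spaces}) reduces to the classical space of real-valued $L^p$ functions. This is precisely the class of measure networks whose kernels have finite $p$-th moment, which is exactly the subspace on which $\dgw$ is finite (as noted following \Cref{def:gromov_wasserstein}).

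The central---and essentially only---step is the identification of the two distortion functionals. For any coupling $\pi\in\mathcal{C}(\mu_X,\mu_Y)$, substituting $d_{\R}(a,b)=|a-b|$ into the integrand defining $\disp^{\R}$ gives
\[
d_{\R}\big(\omega_X(x,x'),\omega_Y(y,y')\big)=\big|\omega_X(x,x')-\omega_Y(y,y')\big|,
\]
so the integrands of $\disp^{\R}(\pi)$ and $\disp(\pi)$ agree pointwise. Hence $\disp^{\R}(\pi)=\disp(\pi)$ for every $\pi$, both in the $p<\infty$ case (equality of the $L^p(\pi\otimes\pi)$-norms) and in the $p=\infty$ case (equality of the essential suprema). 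Taking the infimum over the common coupling set $\mathcal{C}(\mu_X,\mu_Y)$ and multiplying by $\tfrac12$ then yields $\dgwR(X,Y)=\dgw(X,Y)$.

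Since every step is a direct unwinding of definitions, there is no substantial obstacle. The only point requiring a word of care is confirming that the $L^p$-kernel requirement in the definition of a $Z$-network coincides with the finite-$p$-th-moment condition under which $\dgw$ is defined; for $Z=\R$ this is immediate.
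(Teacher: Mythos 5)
Your proposal is correct and matches the paper's treatment: the paper simply declares this result ``obvious from the definitions,'' and your argument is exactly the direct unwinding of those definitions (substituting $d_{\R}(a,b)=|a-b|$ into the distortion functional and matching the object classes). No discrepancy to report.
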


Consider measures $\mu$ and $\nu$ on a metric space $(Z,d_Z)$. It is well-known that the standard GW distance is not a generalization of the Wasserstein distance, in the sense that, for $X = (Z,d_Z,\mu)$ and $Y = (Z,d_Z,\nu)$, 
    \begin{equation}
     \mathrm{GW}_p(X,Y) \leq \mathrm{W}_p(\mu,\nu),
    \end{equation}
(see \citealt[Theorem 5.1(c)]{memoli2011}), but equality does not hold in general---as a simple example, consider the case where $Z = \R$ and $\mu$ and $\nu$ are translates of one another, yielding $\mathrm{GW}_p(X,Y) = 0$ and $\mathrm{W}_p(\mu,\nu) > 0$. Next, we show that $Z$-GW distance does give such a generalization.

\begin{proposition}\label{prop:WasZGW}
    The Wasserstein distance over an arbitrary metric space $(Z,d_Z)$ (\Cref{def:wasserstein}) can be realized as a $Z$-GW distance.
\end{proposition}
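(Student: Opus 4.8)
The plan is to encode each probability measure on $Z$ as a $Z$-network whose kernel ignores its second argument, so that the $Z$-GW double integral collapses to the single integral defining $\mathrm{W}_p$. Concretely, given Borel probability measures $\mu,\nu$ on $(Z,d_Z)$ with finite $p$-th moments, I would set $X := (Z,\omega_X,\mu)$ and $Y := (Z,\omega_Y,\nu)$, where $\omega_X(x,x') := x$ and $\omega_Y(y,y') := y$ depend only on the first coordinate (these are the coordinate projections, hence measurable into the separable space $Z$). The first step is to check that these are genuine $(Z,p)$-networks, i.e.\ that $\omega_X \in L^p(Z\times Z,\mu\otimes\mu;Z)$; fixing a basepoint $z_0\in Z$, this reduces to
\[
\int_{Z\times Z} d_Z(\omega_X(x,x'),z_0)^p\,(\mu\otimes\mu)(dx\times dx') = \int_Z d_Z(x,z_0)^p\,\mu(dx) < \infty,
\]
which is exactly the finite $p$-th moment hypothesis under which $\mathrm{W}_p(\mu,\nu)$ is defined (the analogous bound for $\nu$ holds, recalling that the $L^p$ condition is independent of the choice of $z_0$).

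The second step is the distortion computation. For any $\pi \in \mathcal{C}(\mu,\nu)$ the integrand $d_Z(\omega_X(x,x'),\omega_Y(y,y'))^p = d_Z(x,y)^p$ does not depend on the primed variables, so integrating the product measure $\pi\otimes\pi$ over $(x',y')$ contributes a factor of $1$ and leaves
\[
\mathrm{dis}^Z_p(\pi)^p = \int_{X\times Y} d_Z(x,y)^p\,\pi(dx\times dy),
\]
with the $p=\infty$ case identical upon replacing the integral by an essential supremum. Since $\mathcal{C}(\mu,\nu)$ is literally the same index set as in \Cref{def:wasserstein}, taking the infimum over $\pi$ gives $\inf_\pi \mathrm{dis}^Z_p(\pi) = \mathrm{W}_p(\mu,\nu)$, and therefore $\dgwz(X,Y) = \tfrac12\,\mathrm{W}_p(\mu,\nu)$.

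The only genuine subtlety, and the point I would flag as the main obstacle, is the factor $\tfrac12$ built into \Cref{def:ZGW_distance}: because the kernels here are constant in their second argument, the pairwise structure of the GW double integral degenerates to a single integral and the normalizing $\tfrac12$ is no longer compensated, so the construction above realizes $\tfrac12\mathrm{W}_p$ rather than $\mathrm{W}_p$ on the nose. To obtain an exact realization I would rescale the target metric, observing that $(Z,2d_Z)$ is again a complete, separable metric space and that
\[
\mathrm{GW}_p^{(Z,2d_Z)}(X,Y) = \tfrac12\Big(\inf_\pi \int_{X\times Y} (2\,d_Z(x,y))^p\,\pi(dx\times dy)\Big)^{1/p} = \mathrm{W}_p^{(Z,d_Z)}(\mu,\nu),
\]
with the $p=\infty$ case again analogous. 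Alternatively, one simply records the identity $\dgwz(X,Y)=\tfrac12\mathrm{W}_p(\mu,\nu)$ and regards $\mathrm{W}_p$ as realized up to this global constant. I expect no further difficulties: beyond the normalization, the only things to verify are that the index set of couplings is unchanged and that the moment hypothesis makes the chosen kernels admissible.
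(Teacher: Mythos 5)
Your construction (kernels given by projection onto the first coordinate, collapse of the double integral, and rescaling the target metric to $2d_Z$ to absorb the $\tfrac12$ normalization) is exactly the argument the paper gives, with the minor bonus that you explicitly verify the $L^p$ admissibility of the kernel from the finite $p$-th moment hypothesis. The proposal is correct and essentially identical to the paper's proof.
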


\begin{proof}
    Let $(Z,d_Z)$ be a metric space and let $\mu$ and $\nu$ be Borel probability measures on $Z$ with finite $p$-th moments. To account for different scaling conventions in the definitions of Wasserstein and GW distances, we replace $d_Z$ with $\hat{d}_Z = 2 \cdot d_Z$ as the metric on the target for our $Z$-networks. Define $Z$-valued $p$-measure networks $X = (Z,\omega_X,\mu)$ and $Y = (Z,\omega_Y,\nu)$ with $\omega_X, \omega_Y:Z \times Z \to Z$ both denoting projection onto the first coordinate, i.e., $\omega_X(z,z') = \omega_Y(z,z') = z$. Then, for any coupling $\pi \in \mathcal{C}(\mu,\nu)$, we have, for $p < \infty$,
    \begin{align} 
        \frac{1}{2^p} \mathrm{dis}_p^Z(\pi)^p & = \frac{1}{2^p}\int_{X \times Y} \int_{X \times Y} \hat{d}_Z(\omega_X(x,x'),\omega_Y(y,y'))^p \pi(dx \times dy) \pi(dx' \times dy') \\
        & = \frac{1}{2^p}\int_{X \times Y} \int_{X \times Y} 2^p \cdot d_Z(x,y)^p \pi(dx \times dy) \pi(dx' \times dy') \\
                     & = \int_{X \times Y} d_Z(x,y)^p \pi(dx \times dy).
    \end{align}
    The cost of any coupling is therefore the same for the $Z$-GW and Wasserstein distances, and it follows that $\dgwz(X,Y) = \mathrm{W}_p(\mu,\nu)$. We wrote out the calculation using integrals for the sake of clarity, but re-writing in terms of norms shows that it extends to the $p=\infty$ case without change.
\end{proof}

The \emph{ultrametric Gromov-Wasserstein} distances were introduced by~\citet{memoli2021ultrametric} as a way to compare \define{ultrametric measure spaces}---that is, measure networks $(X,\omega_X,\mu_X)$ such that the network kernel is a metric on $X$ which additionally satisfies the \emph{strong triangle inequality} $\omega_X(x,x'') \leq \max\{\omega_X(x,x'),\omega_X(x',x'')\}$. For $p \in [1,\infty)$ (or with the obvious extension if $p = \infty$), the \define{ultrametric Gromov-Wasserstein} distance between ultrametric measure spaces $X$ and $Y$ is 
\begin{equation}
\mathrm{GW}_{p,\infty}(X,Y) = \frac{1}{2} \left(\iint_{(X \times Y)^2}  \Lambda_\infty(\omega_X(x,x'),\omega_Y(y,y'))^p \pi(dx \times dy) \pi(dx' \times dy') \right)^{1/p},
\end{equation}
where $\Lambda_\infty:\R_{\geq 0} \times \R_{\geq 0} \to \R$ is the function, 
\begin{equation}\label{eqn:lambda_infty}
\Lambda_\infty(a,b) = \left\{\begin{array}{cl}
\max\{a,b\} & a \neq b \\
0 & a = b. \end{array}\right.
\end{equation}
It is easy to check that $\Lambda_\infty$ is a metric, so that $\mathrm{GW}_{p,\infty} = \mathrm{GW}_p^{(\R_{\geq 0},\Lambda_\infty)}$. We have proved the following:

\begin{proposition}\label{prop:uGWasZGW}
    The ultrametric Gromov-Wasserstein distance introduced by~\citet{memoli2021ultrametric} is a $Z$-GW distance with $(Z,d_Z) = (\R_{\geq 0}, \Lambda_\infty)$.
\end{proposition}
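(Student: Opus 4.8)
The plan is to isolate the one ingredient with any content --- that $\Lambda_\infty$ is a bona fide metric on $\R_{\geq 0}$ --- and then to observe that, granted this, the defining expression for $\mathrm{GW}_{p,\infty}$ is verbatim an instance of \Cref{def:ZGW_distance} with $(Z,d_Z) = (\R_{\geq 0},\Lambda_\infty)$.

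First I would verify the metric axioms for $\Lambda_\infty$. Symmetry follows from $\max\{a,b\}=\max\{b,a\}$ together with the symmetry of the predicate $a\neq b$, and positive-definiteness is immediate: $a=b$ gives $\Lambda_\infty(a,b)=0$ by fiat, while $a\neq b$ forces $\max\{a,b\}>0$ since at least one of the nonnegative reals $a,b$ is strictly positive. The only axiom requiring thought is the triangle inequality, and in fact I would establish the stronger strong triangle inequality $\Lambda_\infty(a,c)\le\max\{\Lambda_\infty(a,b),\Lambda_\infty(b,c)\}$ by a short case split: if $a=c$ the left-hand side is $0$; if exactly one of $a=b$, $b=c$ holds then the right-hand side collapses to $\max\{a,c\}=\Lambda_\infty(a,c)$ (equality); and if $a,b,c$ are pairwise distinct the right-hand side equals $\max\{a,b,c\}\ge\max\{a,c\}$. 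Since every ultrametric is a metric, this closes the nontrivial step --- and incidentally explains the $\infty$ subscript, as $\Lambda_\infty$ is itself an ultrametric.

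With $\Lambda_\infty$ confirmed to be a metric, the realization reduces to matching definitions. An ultrametric measure space $(X,\omega_X,\mu_X)$ has $\omega_X$ valued in $\R_{\geq 0}$ and $p$-integrable against $\mu_X\otimes\mu_X$ (part of the standing hypotheses on the spaces over which $\mathrm{GW}_{p,\infty}$ is defined; note $\Lambda_\infty(\cdot,0)$ is the identity on $\R_{\geq 0}$, so this integrability is exactly $L^p$-membership for the new metric), whence $\omega_X\in L^p(X\times X,\mu_X\otimes\mu_X;(\R_{\geq 0},\Lambda_\infty))$ and $X$ is a $(Z,p)$-network. I would then compare the two objective functionals directly: both integrate $\Lambda_\infty(\omega_X(x,x'),\omega_Y(y,y'))^p$ against $\pi\otimes\pi$, extract the $p$-th root, and carry the prefactor $\tfrac12$, with the infimum taken over the same coupling set $\mathcal{C}(\mu_X,\mu_Y)$; hence the two distances agree on every pair of inputs, and the $p=\infty$ case follows identically upon replacing integrals by essential suprema.

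The subtle point --- which I expect to be the real obstacle, though it does not obstruct the realization itself --- is that $(\R_{\geq 0},\Lambda_\infty)$ is \emph{not} separable: for any $a>0$ and any $b\neq a$ one has $\Lambda_\infty(a,b)=\max\{a,b\}\ge a$, so $a$ is $\Lambda_\infty$-isolated and the uncountable set $\R_{>0}$ is discrete. Consequently the standing separability hypothesis on $Z$ fails here, and one should not read this realization as importing the separable-$Z$ conclusions (such as \Cref{thm:ZGW_is_a_metric}). For the mere equality of distances, however, separability is unnecessary: the integrand $\Lambda_\infty\circ(\omega_X\times\omega_Y)$ is measurable because $\Lambda_\infty$ is Borel as a real-valued function on the standard space $\R_{\geq 0}^2$ --- it equals the continuous map $\max$ off the closed diagonal and $0$ on it --- so Nedoma's pathology never intervenes and the defining integrals are well-posed. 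I would flag this explicitly to keep the scope of the claim honest.
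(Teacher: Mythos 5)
Your proposal is correct and follows the same route as the paper: the paper's entire argument is the one-line observation that $\Lambda_\infty$ is a metric, after which $\mathrm{GW}_{p,\infty} = \mathrm{GW}_p^{(\R_{\geq 0},\Lambda_\infty)}$ holds by inspection of the two defining formulas. Your case analysis for the strong triangle inequality simply fills in what the paper dismisses as "easy to check."

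Your closing observation, however, goes beyond the paper and is worth keeping: $(\R_{\geq 0},\Lambda_\infty)$ is indeed not separable, since $\Lambda_\infty(a,b) = \max\{a,b\} \geq a$ for every $b \neq a$ makes each $a>0$ isolated, so $\R_{>0}$ is an uncountable discrete subspace. The paper's standing hypothesis for the section (a complete and separable $Z$) and the hypotheses of \Cref{thm:ZGW_is_a_metric} and of the $L^p$-space machinery therefore do not apply to this choice of $Z$, and the paper does not acknowledge this. You are right that the realization itself survives --- the integrand is Borel measurable as a real-valued function regardless, and the $L^p$ membership condition reduces to finiteness of the $p$-th moment of the ultrametric --- but the caveat that one cannot blindly import the separable-$Z$ consequences for this particular instance is a genuine and correct refinement of the statement's scope.
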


Analogously to \eqref{eqn:lambda_infty}, one can define a family of functions  $\Lambda_q:\R_{\geq 0} \times \R_{\geq 0} \to \R$, for $q \in [1,\infty)$, by 
\begin{equation}\label{eqn:q_metrics}
\Lambda_q(a,b) = |a^q - b^q|^{1/q}. 
\end{equation}
This leads to the \define{$(p,q)$-Gromov-Wasserstein distance} of~\citet{arya2023gromov}:
\begin{equation}\label{eqn:GWpq}
\mathrm{GW}_{p,q}(X,Y) = \frac{1}{2}\inf_{\pi \in \mathcal{C}(\mu_X,\mu_Y)} \left(\iint_{(X \times Y)^2} \Lambda_q(\omega_X(x,x'),\omega_Y(y,y'))^p \pi(dx \times dy) \pi(dx' \times dy') \right)^{1/p}
\end{equation}
(defined here for $p < \infty$, with the $p=\infty$ version defined similarly). The functions $\Lambda_q$ are metrics on $\R_{\geq 0}$ \citep[Proposition 1.13]{arya2023gromov} and we clearly have $\mathrm{GW}_{p,q} = \mathrm{GW}_p^{(\R_{\geq 0},\Lambda_q)}$. Thus we have shown:

\begin{proposition}\label{prop:pqGWasZGW}
    The $(p,q)$-Gromov-Wasserstein distance of~\citet{arya2023gromov} is a $Z$-GW distance with $(Z,d_Z) = (\R_{\geq 0}, \Lambda_q)$. 
\end{proposition}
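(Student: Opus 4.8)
The plan is to observe that, after substituting $d_Z = \Lambda_q$ into \Cref{def:ZGW_distance}, the defining formula for $\mathrm{GW}_p^{(\R_{\geq 0},\Lambda_q)}$ and the formula \eqref{eqn:GWpq} for $\mathrm{GW}_{p,q}$ become literally identical: in both cases the infimum runs over the same set $\mathcal{C}(\mu_X,\mu_Y)$ of couplings, and the integrand against $\pi \otimes \pi$ is $\Lambda_q(\omega_X(x,x'),\omega_Y(y,y'))^p$. Consequently the two optimization problems agree coupling-by-coupling, and the distances coincide. The only genuine content, then, is to confirm that $(\R_{\geq 0},\Lambda_q)$ is an admissible target space for the $Z$-GW framework and that the relevant kernels land in the correct $L^p$ space; everything else is a direct comparison of definitions.

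First I would record that $\Lambda_q$ is a metric on $\R_{\geq 0}$, which is \cite[Proposition 1.13]{arya2023gromov}, so that $(\R_{\geq 0},\Lambda_q)$ is a bona fide metric space. Next I would verify the standing completeness and separability hypotheses on $Z$ imposed in \Cref{sec:basic_definitions}. To this end, consider the bijection $\phi:\R_{\geq 0} \to \R_{\geq 0}$, $\phi(a) = a^q$; since $\Lambda_q(a,b) = |\phi(a) - \phi(b)|^{1/q}$, the map $\phi$ is an isometry from $(\R_{\geq 0},\Lambda_q)$ onto the snowflake space $(\R_{\geq 0},|\cdot|^{1/q})$. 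Because $t \mapsto t^{1/q}$ is continuous, strictly increasing, and vanishes only at $0$, a sequence is $|\cdot|^{1/q}$-Cauchy (respectively convergent) if and only if it is $|\cdot|$-Cauchy (respectively convergent); thus completeness and separability transfer from $(\R_{\geq 0},|\cdot|)$ to $(\R_{\geq 0},|\cdot|^{1/q})$, and hence, via $\phi$, to $(\R_{\geq 0},\Lambda_q)$.

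I would then check that the $(p,q)$-GW objects---metric measure spaces $(X,d_X,\mu_X)$ with finite $p$-th moment---are exactly the $(\R_{\geq 0},\Lambda_q)$-networks of the required type. Choosing the basepoint $z_0 = 0$, one has $\Lambda_q(d_X(x,x'),0) = |d_X(x,x')^q|^{1/q} = d_X(x,x')$, so the condition $\omega_X = d_X \in L^p(X \times X,\mu_X \otimes \mu_X;(\R_{\geq 0},\Lambda_q))$ reduces precisely to the standard finite-$p$-th-moment requirement $\iint d_X(x,x')^p \, \mu_X(dx)\,\mu_X(dx') < \infty$. Hence the two classes of objects, together with their admissibility conditions, coincide.

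I do not expect any real obstacle here: once the target is confirmed to satisfy the framework's hypotheses, the equality $\mathrm{GW}_{p,q} = \mathrm{GW}_p^{(\R_{\geq 0},\Lambda_q)}$ is an immediate matching of the two defining expressions, valid for $p < \infty$ and, via the analogous essential-supremum formulation, for $p = \infty$ as well. If any step merits care, it is only the completeness verification through the snowflake identification above, which is what guarantees that the general $Z$-GW theory of \Cref{sec: metric_properties} applies to this choice of $Z$.
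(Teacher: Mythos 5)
Your proposal is correct and follows essentially the same route as the paper, which simply cites \cite[Proposition 1.13]{arya2023gromov} for the fact that $\Lambda_q$ is a metric and then observes that the two defining formulas coincide term-by-term. Your additional verifications---that $(\R_{\geq 0},\Lambda_q)$ is complete and separable via the snowflake isometry $a \mapsto a^q$, and that the $L^p$ kernel condition with basepoint $0$ reduces to the finite $p$-th moment condition---are details the paper leaves implicit, and they are carried out correctly.
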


\begin{remark}\label{rem:comparison_to_sturm}
    ~\citet{sturm2023space} considers a distance very similar to the $(p,q)$-GW distance defined in \eqref{eqn:GWpq}. The distinction is that the integrand in Sturm's version is raised to a $q$th power (i.e., it integrates the function $\Lambda_q(\omega_X(\cdot,\cdot),\omega_Y(\cdot,\cdot))^{qp}$). As was observed by~\citet{arya2023gromov}, this version does not enjoy the same homogeneity under scaling that \eqref{eqn:GWpq} does. Moreover, we prefer the formulation of \eqref{eqn:GWpq}, since $\Lambda_q$ defines a metric, but $\Lambda_q^q$ does not. 
\end{remark}

\subsubsection{Attributed Graphs}\label{sec:attributed_graphs}

In practice, graph data sets are commonly endowed with additional attributes. As a concrete example (from~\citealt{kawano2024multi}), consider a graphical representation of a molecule, where nodes represent atoms and edges represent bonds. Each node and edge is then naturally endowed with a categorical label---the atom type and bond type, respectively. This information can be one-hot encoded, leading to an $\R^n$-valued label on each node and an $\R^m$-valued label on each edge ($n$ and $m$ being the number of atom and bond types present in the data set, respectively). Attributed graphs are also ubiquitous in modeling social networks (see, e.g.,~\citealp{bothorel2015clustering}), where node attributes encode statistics about members of the network and edge attributes can record diverse information about member interactions---in this setting, data typically includes (non-binary) attributes in Euclidean spaces. Graphs with attributes in more exotic metric spaces are also of interest---we describe examples from~\citet{bal2022statistical} and~\citet{robertson2023generalization} in detail below, in \Cref{sec:other_examples}. 

A general model for graphs with metric space-attributed nodes and edges is introduced in the work of \citet{yang2023exploiting}. Following their ideas, we consider the following structure.

\begin{definition}[Attributed Network]\label{def:attributed_network}
    We work with the following collection of hyperparameters $\mathcal{H} = (p,\Omega,d_\Omega,\Psi,d_\Psi)$, where $p \in [1,\infty]$, and $(\Omega,d_\Omega)$, $(\Psi,d_\Psi)$ are separable metric spaces. An \define{attributed network with hyperparameter data $\mathcal{H}$}, or simply \define{$\mathcal{H}$-network}, is a 5-tuple of the form $(X,\psi_X,\phi_X,\omega_X,\mu_X)$, where
    \begin{itemize}
        \item the triple $(X,\phi_X,\mu_X)$ is a measure network (valued in $\R$), with $\phi_X \in L^p(\mu_X\otimes \mu_X)$, which models the underlying graph of an attributed graph,
        \item $\psi_X \in L^p(X,\mu_X; \Psi)$ models node features attributed in $\Psi$, and 
        \item $\omega_X \in L^p(X \times X, \mu_X \otimes \mu_X; \Omega)$ models edge features attributed in $\Omega$.
    \end{itemize}
\end{definition}

\begin{remark}
    The definition above is more general than the one given by~\citet[Definition 2.1]{yang2023exploiting}: therein, all functions are assumed to be bounded and continuous. Moreover, \citet{yang2023exploiting} does not assume separability of the target metric spaces, which can potentially lead to technical issues of well-definedness, following the discussion in \Cref{sec:metric_space_valued_Lp}.
\end{remark}

A notion of distance between $\mathcal{H}$-networks is also given by~\citet{yang2023exploiting}, as we now recall.

\begin{definition}[Fused Network Gromov-Wasserstein Distance, \cite{yang2023exploiting}]\label{def:fused_network_GW}
    For fixed hyperparameter data $\mathcal{H} = (p,\Omega,d_\Omega,\Psi,d_\Psi)$, let  
    \[
    X = (X,\psi_X,\phi_X,\omega_X,\mu_X)\quad  \mbox{and}\quad  Y = (Y,\psi_Y,\phi_Y,\omega_Y,\mu_Y)
    \]
    be $\mathcal{H}$-networks. Given additional hyperparameters $q \in [1,\infty)$ and $\alpha,\beta \in [0,1]$ with $\alpha + \beta \leq 1$, the associated \define{fused network Gromov-Wasserstein (FNGW) distance} is 
    \begin{multline}\label{eqn:fused_network_GW}
        \mathrm{FNGW}^\mathcal{H}_{q,\alpha,\beta}(X,Y) = \frac{1}{2} \inf_{\pi \in \mathcal{C}(\mu_X,\mu_Y)} \left(\int_{X \times Y} \int_{X \times Y} \left((1-\alpha-\beta)\,d_\Psi(\psi_X(x),\psi_Y(y))^q \right.\right. \\
        \left.\left. + \alpha \, d_\Omega(\omega_X(x,x'),\omega_Y(y,y'))^q + \beta \,|\phi_X(x,x') - \phi_Y(y,y')|^q\right)^{p/q} \pi(dx \times dy) \pi(dx' \times dy')\right)^{1/p}
    \end{multline}
    The definition extends to the case $p=\infty$ or $q=\infty$ cases straightforwardly.
\end{definition}

\begin{remark}\label{rem:qth_root}
    The formulation given in \eqref{eqn:fused_network_GW} is actually subtly different than the one given by~\citet{yang2023exploiting}. In analogy with \Cref{rem:comparison_to_sturm}, we have added an extra power of $1/q$ to the integrand. We will see below that this leads to improved theoretical properties of the distance.
\end{remark}

Observe that the distance \eqref{eqn:fused_network_GW} specializes to other notions of distance in the literature through appropriate choices of the balance parameters $\alpha$ and $\beta$:
\begin{enumerate}
    \item Taking $\alpha = \beta = 0$, one obtains the Wasserstein $p$-distance on $(\Psi,d_\Psi)$ between the pushforward measures $(\psi_X)_\ast \mu_X$ and $(\psi_Y)_\ast \mu_Y$. 
    \item Taking $\alpha = 1$, $\beta = 0$, we recover the $Z$-GW $p$-distance (\Cref{def:ZGW_distance}) between the underlying $Z$-networks, with $(Z,d_Z) = (\Omega,d_\Omega)$. Conversely, we show below that FNGW can be formulated as a special case of a $Z$-GW distance (\Cref{prop:FNGW_equals_ZGW}).
    \item Taking $\alpha = 0$, $\beta = 1$, recovers the standard GW distance between the underlying measure networks.
    \item Finally, taking general $\alpha = 0$ and $\beta \in [0,1]$, we get the \define{Fused Gromov-Wasserstein (FGW) distance}, which was introduced by~\citet{vayer2020fused} as a framework for comparing graphs with only node attributes. 
\end{enumerate}

\begin{remark}\label{rem:qth_root_FGW}
    The original formulation of FGW distance also omitted the $q$-th root in the integrand that was discussed in \Cref{rem:qth_root}, but we argue for its inclusion for theoretical reasons discussed below. The FGW framework introduced by~\citet{vayer2020fused} is also slightly more general, in that it considers a joint distribution on $X \times \Psi$, rather than a function $\psi_X:X \to \Psi$; the formulation considered here is the one most commonly used in practice---see~\citep{titouan2019optimal,flamary2021pot}.
\end{remark}

We now show that the rather complicated formulation of \eqref{eqn:fused_network_GW} is somewhat redundant.

\begin{proposition}\label{prop:FNGW_equals_ZGW}
An attributed network with hyperparameter data $\mathcal{H}$ has a natural representation as a $(Z,p)$-network, for an appropriate choice of $(Z,d_Z)$. Moreover, the Fused Network Gromov-Wasserstein distance  \eqref{eqn:fused_network_GW} can be realized as a $Z$-Gromov-Wasserstein distance.
\end{proposition}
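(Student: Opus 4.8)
The plan is to fold all three attributes of an $\mathcal{H}$-network into a single $Z$-valued kernel. Write $c_\Psi = 1-\alpha-\beta$, $c_\Omega = \alpha$ and $c_\phi = \beta$, and set $Z = \Psi \times \Omega \times \R$ equipped with the weighted $\ell^q$-combination
\[
d_Z\big((a,b,c),(a',b',c')\big) = \Big(c_\Psi\, d_\Psi(a,a')^q + c_\Omega\, d_\Omega(b,b')^q + c_\phi\, |c-c'|^q\Big)^{1/q}.
\]
Given $X = (X,\psi_X,\phi_X,\omega_X,\mu_X)$, I would define the combined kernel $\Theta_X : X \times X \to Z$ by $\Theta_X(x,x') = \big(\psi_X(x),\omega_X(x,x'),\phi_X(x,x')\big)$. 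The key observation is that the node-feature term in the FNGW integrand depends only on the first coordinate $x$; by inserting $\psi_X(x)$ into the first slot of $\Theta_X(x,x')$, the corresponding contribution to $d_Z(\Theta_X(x,x'),\Theta_Y(y,y'))^q$ is exactly $c_\Psi\, d_\Psi(\psi_X(x),\psi_Y(y))^q$, matching \eqref{eqn:fused_network_GW} with no double counting (there is no symmetry constraint on a network kernel, so nothing prevents this asymmetric choice).

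First I would verify that $d_Z$ is a metric. Symmetry is immediate, and positive-definiteness holds whenever all three weights are strictly positive. For the triangle inequality, I would view $d_Z(u,w)$ as the $\ell^q$-norm on $\R^3$ of the vector of weighted component distances $\big(c_\Psi^{1/q} d_\Psi(u_1,w_1),\, c_\Omega^{1/q} d_\Omega(u_2,w_2),\, c_\phi^{1/q}|u_3-w_3|\big)$, apply the componentwise triangle inequalities of $d_\Psi$, $d_\Omega$ and $|\cdot|$, and then invoke Minkowski's inequality in $\ell^q(\R^3)$. This is precisely the step where the extra $1/q$ root emphasized in \Cref{rem:qth_root} is essential: without it the resulting function is a $q$-th power of a metric and fails the triangle inequality. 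When one or more of the weights vanish, $d_Z$ is only a pseudo-metric; I would handle this by discarding the degenerate factors and working inside the subproduct of $Z$ carrying strictly positive weight, which is a genuine metric space and yields identical distortion values. In every case $Z$ is a finite product of separable (and, if the $\mathcal{H}$-data are complete, complete) metric spaces, hence separable (resp. complete), so the framework of \Cref{def:ZGW_distance} applies.

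Next I would check that $\Theta_X \in L^p(X \times X, \mu_X \otimes \mu_X; Z)$. Measurability is coordinatewise. Using the elementary bound $(t_1+t_2+t_3)^{p/q} \le C_{p,q}\,(t_1^{p/q}+t_2^{p/q}+t_3^{p/q})$ for nonnegative reals (with $C_{p,q} = 3^{p/q-1}$ if $p \geq q$ and $C_{p,q} = 1$ otherwise), the finiteness of $\int d_Z(\Theta_X,z_0)^p\, d(\mu_X\otimes\mu_X)$ reduces to the separate integrability of $d_\Psi(\psi_X(\cdot),\cdot)^p$, $d_\Omega(\omega_X(\cdot,\cdot),\cdot)^p$ and $|\phi_X(\cdot,\cdot)-\cdot|^p$, which hold because $\psi_X \in L^p(X,\mu_X;\Psi)$, $\omega_X \in L^p(\mu_X\otimes\mu_X;\Omega)$ and $\phi_X \in L^p(\mu_X\otimes\mu_X)$ by definition of an $\mathcal{H}$-network (the $\psi_X$-term collapses to a single integral over $X$, since it ignores the second variable). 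Having established that $\Theta_X$ is a valid $(Z,p)$-network, the conclusion is immediate: by construction $d_Z(\Theta_X(x,x'),\Theta_Y(y,y'))^p$ equals the integrand of \eqref{eqn:fused_network_GW}, so $\disp^Z(\pi)$ coincides with the quantity being infimized (before the factor $\tfrac12$) in the FNGW distance for every coupling $\pi$; taking the infimum yields $\dgwz(X,Y) = \mathrm{FNGW}^\mathcal{H}_{q,\alpha,\beta}(X,Y)$. The extension to $p = \infty$ or $q = \infty$ follows by replacing the relevant integrals and $\ell^q$-norms by essential suprema throughout. I expect the main obstacle to be the metric verification together with the bookkeeping for degenerate weights; the $L^p$-membership and the final identification of distortions are routine.
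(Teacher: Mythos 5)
Your proposal is correct and follows essentially the same route as the paper's proof: the same product space $Z = \Psi \times \Omega \times \R$ with the weighted $\ell^q$-combination metric, the same combined kernel $(x,x') \mapsto (\psi_X(x),\omega_X(x,x'),\phi_X(x,x'))$, and the same pointwise identification of $d_Z(\cdot,\cdot)^p$ with the FNGW integrand. You additionally spell out the verification that $d_Z$ satisfies the triangle inequality, the handling of degenerate (zero) weights, and the $L^p$-membership of the combined kernel---details the paper leaves implicit but which are checked correctly here.
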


\begin{proof}
    Let hyperparameters $\mathcal{H} = (p,\Omega,d_\Omega,\Psi,d_\Psi)$, as well as $q \in [1,\infty)$ and $\alpha,\beta \in [0,1]$ with $\alpha + \beta \leq 1$ be given. Suppose that  $X = (X,\psi_X,\phi_X,\omega_X,\mu_X)$ and $Y = (Y,\psi_Y,\phi_Y,\omega_Y,\mu_Y)$ are $\mathcal{H}$-networks. We construct a metric space $Z$ as follows. Let $Z = \Psi \times \Omega \times \R$ and let $d_Z$ be the weighted $\ell^q$ metric
    \[
    d_Z((a,b,c),(a',b',c')) = \left((1-\alpha-\beta)d_\Psi(a,a')^q + \alpha d_\Omega(b,b')^q + \beta |c - c'|^q\right)^{1/q}.
    \]

    Now, we define a $Z$-network $\overline{X} = (X,\overline{\omega}_X,\mu_X)$ by setting
    \[
    \overline{\omega}_X(x,x') = \left(\psi_X(x),\omega_X(x,x'),\phi_X(x,x')\right) \in \Psi \times \Omega \times \R = Z.
    \]
    We define $\overline{Y} = (Y,\overline{\omega}_Y,\mu_Y)$ similarly. Then, for any $\pi \in \mathcal{C}(\mu_X,\mu_Y)$, we have 
    \begin{align} 
         &\iint d_Z(\overline{\omega}_X(x,x'), \overline{\omega}_Y(y,y'))^p \pi(dx \times dy) \pi(dx' \times dy') \\
        &\qquad \qquad = \iint \left((1-\alpha-\beta)d_\Psi(\psi_X(x),\psi_Y(y))^q + \alpha d_\Omega(\omega_X(x,x'),\omega_Y(y,y'))^q \right.\\
        &\hspace{1.5in} \left. + \beta |\phi_X(x,x') - \phi_Y(y,y')|^q \right)^{p/q} \pi(dx \times dy)\pi(dx' \times dy'),
    \end{align}
    and it follows that $\mathrm{GW}_p^Z(\overline{X},\overline{Y}) = \mathrm{FNGW}^\mathcal{H}_{q,\alpha,\beta}(X,Y)$.
\end{proof}

\subsubsection{Diffusions and Stochastic Processes}

Inspired by a notion of spectral convergence for Riemannian manifolds introduced by \citet{kasue1994spectral}, \citet{memoli2009spectral,memoli2011spectral} considers a certain \emph{spectral} version of the GW distance between Riemannian manifolds (see S. Lim's PhD thesis, \citealt[Chapter 5]{thesis-sunhyuk}, for a generalization to Markov processes and \citealt{chen2022weisfeiler,chen2023weisfeiler} for related ideas). For a given compact Riemannian manifold $(M,g_M)$ let $k_M:M\times M\times \R_{>0}\to \R_{>0}$ denote its normalized heat kernel\footnote{i.e., $\lim_{t\to\infty}k_M(x,x',t) = 1$ for all $x,x'\in M$.} and $\mu_M$ its normalized volume measure (we use $\R_{>0}$ to denote the set of positive real numbers). Then, if $N$ is another compact Riemannian manifold and $p\geq 1$, the \define{spectral Gromov-Wasserstein $p$-distance} between $M$ and $N$ is
\[
\dgw^\mathrm{spec}(M,N):=\frac{1}{2} \inf_{\pi\in\mathcal{C}(\mu_X,\mu_Y)}\sup_{t>0} c^2(t)\cdot\|\Gamma_{M,N,t}^\mathrm{spec}\|_{L^p(\pi\otimes \pi)}
\]
where $c(t):=e^{-t^{-1}}$ and 
$\Gamma_{M,N,t}^\mathrm{spec}(x,y,x',y'):=\big|k_M(x,x',t)-k_N(y,y',t)\big|$ for $x,x'\in M$ and $y,y'\in Y$.
The reason for the use of the dampening function $c(t)$ is to tame the blow up of the heat kernel as $t\to 0$:  $k_M(x,x',t)\sim t^{-\tfrac{m}{2}}$  where $m$ is the dimension of $M$; see  \citep{rosenberg1997laplacian}. 

To recast $\dgw^\mathrm{spec}$ (or, rather, a variant thereof---see below) as a $Z$-GW distance, we make the following choices: 
\begin{equation}\label{eqn:specGWspace}
Z = \left\{f:\R_{>0}\to\R_{>0}\mid\sup_{t>0} c^{2}(t)|f(t)-1|<\infty\right\}
\end{equation}
and, for $f_1,f_2\in Z$,
\begin{equation}\label{eqn:specGWdistance}
d_Z(f_1,f_2)=\sup_{t>0}c^{2}(t)\big|f_1(t)-f_2(t)\big|.
\end{equation}
We then represent a compact Riemannian manifold $(M,g_M)$ as a $Z$-measure network $(M,\omega_M,\mu_M)$ with $\mu_M$ as above and $\omega_M:M \times M \to Z$ defined by
\[
\omega_M(x,x') = k_M(x,x',\cdot) \in Z, \quad \mbox{where} \quad  k_M(x,x',\cdot):t \mapsto k_M(x,x',t).
\]
Then
\begin{align} 
d_Z \circ (\omega_M \times \omega_N) (x,y,x',y') &= \sup_{t>0}c^{2}(t)\big|k_M(x,x',t)-k_N(y,y',t)\big| \\
&= \sup_{t > 0} c^2(t) \Gamma^{\mathrm{spec}}_{M,N,t}(x,y,x',y'),
\end{align}
so that, for an arbitrary coupling $\pi \in \mathcal{C}(\mu_X,\mu_Y)$,
\begin{align}
     \|d_Z \circ (\omega_M \times \omega_N)\|_{L^p(\pi \times \pi)} 
    &=  \left\| \sup_{t > 0} c^2(t) \Gamma^{\mathrm{spec}}_{M,N,t}\right\|_{L^p(\pi \otimes \pi)}.
\end{align}
Accordingly, we define a variant of the spectral GW $p$-distance:
\[
\widetilde{\mathrm{GW}}_p^\mathrm{spec}(M,N) := \frac{1}{2} \inf_{\pi \in \mathcal{C}(\mu_X,\mu_Y)} \left\| \sup_{t > 0} c^2(t) \Gamma^{\mathrm{spec}}_{M,N,t}\right\|_{L^p(\pi \otimes \pi)}.
\]
We have the following result.

\begin{proposition}\label{prop:specGWasZGW}
    The spectral GW $p$-distance $\widetilde{\mathrm{GW}}_p^\mathrm{spec}$ is a $Z$-GW distance, with $(Z,d_Z)$ as in \eqref{eqn:specGWspace} and \eqref{eqn:specGWdistance}, which upper bounds the spectral Gromov-Wasserstein distance introduced by \citet{memoli2009spectral}.
\end{proposition}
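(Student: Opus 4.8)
The plan is to establish two things: that the heat-kernel representation turns a compact Riemannian manifold into a bona fide $(Z,p)$-network, so that $\widetilde{\mathrm{GW}}_p^\mathrm{spec}$ is literally an instance of $\dgwz$, and that this quantity dominates $\dgw^\mathrm{spec}$. The algebraic core is already contained in the displayed computation preceding the statement: for any coupling $\pi$ one has $\|d_Z \circ (\omega_M \times \omega_N)\|_{L^p(\pi\otimes\pi)} = \|\sup_{t>0} c^2(t)\,\Gamma^\mathrm{spec}_{M,N,t}\|_{L^p(\pi\otimes\pi)}$, i.e.\ the $p$-distortion $\disp^Z(\pi)$ of \Cref{def:ZGW_distance} coincides with the integrand defining $\widetilde{\mathrm{GW}}_p^\mathrm{spec}$; taking infima gives $\widetilde{\mathrm{GW}}_p^\mathrm{spec} = \dgwz$ once the representation is justified. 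I would first record that $(Z,d_Z)$ from \eqref{eqn:specGWspace}--\eqref{eqn:specGWdistance} is a genuine metric space: finiteness of $d_Z$ follows by comparing to the constant function $\mathbf 1$ via the triangle inequality, while symmetry, the triangle inequality, and positive-definiteness (using $c^2(t)>0$) are inherited from the weighted supremum.

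The substantive step is to check that the representation lands in the framework of \Cref{def:ZGW_distance}, and this is where the main obstacle lies. The difficulty is that the space in \eqref{eqn:specGWspace}, taken literally as all functions satisfying the boundedness condition, is \emph{not} separable under the supremum-type metric, whereas \Cref{def:metric_lp_spaces} requires separability for $L^p(M\times M,\mu_M\otimes\mu_M;Z)$ to be well-defined. I would resolve this by passing to the continuous subspace indicated in \Cref{tab:metrics}. The key observation is that $(x,x',t) \mapsto c^2(t)\big(k_M(x,x',t)-1\big)$ extends to a continuous function on the compact space $M \times M \times [0,\infty]$, where $[0,\infty]$ is the two-point compactification: near $t=0$ the Gaussian upper bound $k_M \lesssim t^{-m/2}$ is killed by $c^2(t)=e^{-2/t}$, and as $t\to\infty$ the normalized heat kernel converges uniformly to $1$ by the spectral gap. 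Consequently $\omega_M(x,x') = k_M(x,x',\cdot)$ lands in the subspace of $Z$ that is isometric, via $f \mapsto c^2(\cdot)\big(f(\cdot)-1\big)$, to $C_0\big((0,\infty)\big) \subset C\big([0,\infty]\big)$, a separable and complete Banach space; uniform continuity on the compact domain simultaneously yields that $(x,x')\mapsto \omega_M(x,x')$ is continuous into $Z$ (so the integrand $d_Z\circ(\omega_M\times\omega_N)$ is measurable) and that $\sup_{x,x'} d_Z(\omega_M(x,x'),\mathbf 1)<\infty$, whence $\omega_M\in L^\infty \subseteq L^p$ with respect to the finite measure $\mu_M\otimes\mu_M$. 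Restricting the target to this separable complete subspace makes the representation a legitimate $(Z,p)$-network, and the identity $\widetilde{\mathrm{GW}}_p^\mathrm{spec}=\dgwz$ from the first paragraph goes through.

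Finally, I would deduce the upper bound $\dgw^\mathrm{spec}(M,N) \le \widetilde{\mathrm{GW}}_p^\mathrm{spec}(M,N)$ from an exchange of the supremum and the $L^p$-norm. Writing $g_t := c^2(t)\,\Gamma^\mathrm{spec}_{M,N,t}\ge 0$, for each fixed $t$ one has $g_t \le \sup_{s>0} g_s$ pointwise, hence $\|g_t\|_{L^p(\pi\otimes\pi)} \le \|\sup_{s>0} g_s\|_{L^p(\pi\otimes\pi)}$, and taking the supremum over $t$ gives $\sup_{t>0} c^2(t)\|\Gamma^\mathrm{spec}_{M,N,t}\|_{L^p(\pi\otimes\pi)} \le \|\sup_{t>0} c^2(t)\,\Gamma^\mathrm{spec}_{M,N,t}\|_{L^p(\pi\otimes\pi)}$. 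Taking the infimum over $\pi \in \mathcal{C}(\mu_M,\mu_N)$ and multiplying by $\tfrac12$ yields the claimed inequality. I expect the separability/compactification argument of the middle paragraph to be the only genuinely nontrivial part; the identity and the upper bound are then formal.
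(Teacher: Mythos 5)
Your proof is correct, and its two main steps coincide with the paper's: the identity $\widetilde{\mathrm{GW}}_p^\mathrm{spec}=\dgwz$ is read off from the displayed computation preceding the statement, and the upper bound comes from interchanging the supremum over $t$ with the $L^p(\pi\otimes\pi)$-norm, coupling by coupling. The differences are worth noting. First, for the interchange you use only pointwise monotonicity --- $g_t\le\sup_s g_s$ for each fixed $t$, hence $\|g_t\|_{L^p}\le\|\sup_s g_s\|_{L^p}$, then take the supremum over $t$ --- whereas the paper invokes a generalized Minkowski inequality after identifying $\sup_t$ with an $L^\infty(\mathcal{L})$-norm via continuity in $t$; your route is more elementary and sidesteps the essential-supremum-versus-supremum issue entirely. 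Second, and more substantively, you address a point the paper's proof passes over with ``follows immediately by definition'': the space $Z$ of \eqref{eqn:specGWspace}, taken literally, is non-separable under the weighted sup metric, while \Cref{def:metric_lp_spaces} requires a separable target for $L^p(M\times M,\mu_M\otimes\mu_M;Z)$ to be well-defined. Your resolution --- showing via the Gaussian short-time bound and the spectral gap that $c^2(\cdot)(k_M(x,x',\cdot)-1)$ extends continuously to $M\times M\times[0,\infty]$, so that the kernels land in a subspace isometric to a subspace of the separable Banach space $C([0,\infty])$ --- is sound and consistent with the ``continuous functions, sup metric'' entry in \Cref{tab:metrics}; it supplies a verification that the paper's argument implicitly assumes. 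Both approaches prove the stated result; yours buys a fully rigorous check that the heat-kernel representation is a legitimate $(Z,p)$-network, at the cost of a page of heat-kernel estimates the paper chose not to spell out.
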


\begin{proof}
    The fact that $\widetilde{\mathrm{GW}}_p^\mathrm{spec}$ is a $Z$-GW distance follows immediately by definition. To verify the upper bound claim, let $M$ and $N$ be compact Riemannian manifolds and choose an arbitrary coupling $\pi$ between their normalized volume measures. We then consider the map
    \begin{align}
    F: (X \times Y \times X \times Y) \times \R_{> 0} &\to \R \\
    \big((x,y,x',y'),t\big) &\mapsto c^2(t) \Gamma^\mathrm{spec}_{M,N,t}(x,y,x',y')
    \end{align}
    as a function on the product of measure spaces $(X \times Y \times X \times Y, \pi \otimes \pi)$ and $(\R,\mathcal{L})$, where we use $\mathcal{L}$ to denote Lebesgue measure on $\R_{>0}$ through the duration of the proof. We now compare the objective functions of the optimization problems associated to the distances $\widetilde{\mathrm{GW}}_p^\mathrm{spec}(M,N)$ and $\mathrm{GW}_p^\mathrm{spec}(M,N)$ as
    \begin{align}
     \left\|\sup_{t > 0} c^2 (t) \Gamma^{\mathrm{spec}}_{M,N,t} \right\|_{L^p(\pi \otimes \pi)} &= \left\| \left\|F\right\|_{L^\infty(\mathcal{L})} \right\|_{L^p(\pi \otimes \pi)} \\
     &\geq \left\| \left\|F\right\|_{L^p(\pi \otimes \pi)} \right\|_{L^\infty(\mathcal{L})} \\
     &= \sup_{t > 0} \|c^2(t) \Gamma_{M,N,t}^{\mathrm{spec}}\|_{L^p(\pi \otimes \pi)} \\
     &= \sup_{t > 0} c^2(t) \cdot \|\Gamma_{M,N,t}^{\mathrm{spec}}\|_{L^p(\pi \otimes \pi)},
    \end{align}
    where we have interchanged suprema and $L^\infty$ norms via continuity in $t$, and we have applied a generalized version of Minkowski's inequality~\citep[Proposition 1.3]{bahouri2011fourier}. Since the estimate holds for an arbitrary coupling $\pi$, this shows that $\widetilde{\mathrm{GW}}_p^\mathrm{spec}(M,N) \geq \mathrm{GW}_p^\mathrm{spec}(M,N)$.
\end{proof}

\subsubsection{The Gromov-Wasserstein Distance Between Dynamic Metric Spaces}

A \define{dynamic metric space} (or DMS, for short) is a pair $(X,d_X)$, where $X$ is a finite set and $d_X:\R\times X\times X\to \R_{\geq 0}$ is such that: 
\begin{itemize}
\item for every $t\in \R$, $(X,d_X(t))$ is a pseudometric space, where $d_X(t):X \times X \to \R$ is the map $d_X(t)(x,x') = d_X(t,x,x')$;
\item for any $x,x'\in X$ with $x\neq x'$ the function $d_X(\cdot)(x,x'):\R\to \R_{\geq 0}$ defined by $t \mapsto d_X(t)(x,x')$ is continuous and not identically zero.
\end{itemize}
A \define{weighted dynamic metric space} (or wDMS) is a triple $(X,d_X,\mu_X)$ such that $(X,d_X)$ is a DMS and $\mu_X$ is a fully supported probability measure on $X$. 

Dynamic metric spaces provide a mathematical structure suitable for modeling natural phenomena such as flocking and swarming behaviors \citep{sumpter2010collective}. Variants of the Gromov-Hausdorff and Gromov-Wasserstein distances were proposed by  \citet{kim2020analysis,kim2021spatiotemporal,woojin-thesis} in order to metrize the collection of all DMSs and wDMSs, respectively. We now show how wDMSs and these distances fit into the framework of Z-GW distances.

Let $C(\R,\R_{\geq 0})$ denote the collection of all continuous functions $f:\R\to \R_{\geq 0}$. For $\lambda \geq 0$ one defines (see \citealp[Definition 2.7.1]{woojin-thesis}) the \define{$\lambda$-slack interleaving distance} between $f_1,f_2\in C(\R,\R_{\geq 0})$ by
\[
d_{\lambda}(f_1,f_2):=\inf\left\{\varepsilon\in[0,\infty]|\forall t\in\R, \min_{s\in[t-\varepsilon,t+\varepsilon]} f_i(s)\leq f_j(t)+\lambda\varepsilon,\,i,j=1,2\right\}.
\]
According to \citet[Definition 2.9.5]{woojin-thesis}, the $(p,\lambda)$-Gromov-Wasserstein distance between two wDMSs $(X,d_X,\mu_X)$ and $(Y,d_Y,\mu_Y)$ is defined as
\[
\frac{1}{2}\inf_{\pi\in\mathcal{C}(\mu_X,\mu_Y)}\big\|d_\lambda \circ (d_X \times d_Y)\big\|_{L^p(\pi\otimes \pi)}
\]
(the original definition did not include the $1/2$-scaling factor, but we include it here for consistency). We consider a wDMS $(X,d_X,\mu_X)$ as a $Z$-network with $Z = C(\R,\R_{\geq 0})$ by defining a network kernel $\omega_X:X \times X \to Z$ as $\omega_X(x,x') = d_X(\cdot)(x,x')$. 
This immediately yields the following.

\begin{proposition}
    \label{prop:DMS}
    The $(p,\lambda)$-Gromov-Wasserstein distance between wDMSs~\citep{woojin-thesis} is a $Z$-GW distance with $(Z,d_Z) = (C(\R,\R_{\geq 0}),d_\lambda)$. 
\end{proposition}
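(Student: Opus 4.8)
The plan is to verify that the construction matches the definitions essentially by inspection, while being careful about the standing hypotheses that are needed to place the distance inside the $Z$-GW framework of \Cref{def:ZGW_distance}. Set $Z = C(\R,\R_{\geq 0})$ with $d_Z = d_\lambda$, and for a wDMS $(X,d_X,\mu_X)$ define the network kernel $\omega_X\colon X \times X \to Z$ by $\omega_X(x,x') = d_X(\cdot)(x,x')$. The continuity axiom in the definition of a DMS guarantees that each $\omega_X(x,x')$ is a continuous function $\R \to \R_{\geq 0}$, hence a genuine element of $Z$, so this assignment is well-defined. First I would confirm that the objective functions coincide: by construction, for all $x,x' \in X$ and $y,y' \in Y$,
\[
d_Z\big(\omega_X(x,x'),\omega_Y(y,y')\big) = d_\lambda\big(d_X(\cdot)(x,x'),\, d_Y(\cdot)(y,y')\big) = d_\lambda \circ (d_X \times d_Y)(x,y,x',y'),
\]
so that $\mathrm{dis}_p^Z(\pi) = \|d_\lambda \circ (d_X \times d_Y)\|_{L^p(\pi \otimes \pi)}$ for every $\pi \in \mathcal{C}(\mu_X,\mu_Y)$. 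Taking one half of the infimum over all such couplings reproduces exactly the $(p,\lambda)$-Gromov-Wasserstein distance recalled above. This step carries no content beyond unwinding notation.

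The substantive work lies in checking that $(Z,d_Z)$ and the kernel $\omega_X$ meet the conditions under which \Cref{def:ZGW_distance} is stated. I would verify that $d_\lambda$ is a metric (or at least the finite-valued pseudometric the framework tolerates) on the relevant portion of $C(\R,\R_{\geq 0})$, citing the analysis of the $\lambda$-slack interleaving distance in~\cite{woojin-thesis}, and that $Z$ is separable, as required for the space $L^p(X\times X,\mu_X\otimes\mu_X;Z)$ of \Cref{def:metric_lp_spaces} to be well-defined. The membership $\omega_X \in L^p(X \times X,\mu_X\otimes\mu_X;Z)$ is the only place a finiteness bound is nominally needed, but since $X$ is finite the relevant integral is a finite sum of finite quantities, so it is automatic once we know $d_\lambda$ takes finite values on the functions $d_X(\cdot)(x,x')$; no moment hypothesis must be imposed.

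The main obstacle I anticipate is therefore not the formula-matching but confirming that $(C(\R,\R_{\geq 0}),d_\lambda)$ genuinely fits the standing assumptions on the target space. The interleaving distance can a priori equal $+\infty$ (for instance when the two functions are not uniformly comparable) or vanish on distinct functions, so one may need to pass to an appropriate subspace—or exploit the fact that the distance functions $d_X(\cdot)(x,x')$ are continuous and not identically zero—to ensure that $d_\lambda$ behaves as a legitimate, complete, and separable metric. Once these topological hypotheses are secured by appeal to~\cite{woojin-thesis}, the identification $\dgwz(X,Y) = \frac{1}{2}\inf_\pi \|d_\lambda \circ (d_X \times d_Y)\|_{L^p(\pi \otimes \pi)}$ follows with no further computation.
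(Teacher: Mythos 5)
Your proposal is correct and takes essentially the same route as the paper: the paper simply defines $\omega_X(x,x') = d_X(\cdot)(x,x')$ with $(Z,d_Z) = (C(\R,\R_{\geq 0}), d_\lambda)$ and declares the identification immediate, exactly as in your formula-matching step. Your additional scrutiny of whether $d_\lambda$ is a finite-valued, separable metric on $C(\R,\R_{\geq 0})$ goes beyond what the paper records (it does not address these hypotheses in this proof), and is a legitimate point of care rather than a divergence in method.
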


\subsubsection{The Cut Distance Between Graphons}

Recall that in the work of Borgs, Chayes,
Lov\'asz, S\'os and Vesztergombi  \citep{borgs2008convergent} (see also \citealt{lovasz2012large,janson2010graphons}), a \emph{kernel} on a probability space $(\Omega,\mu)$ is any symmetric integrable function $U:\Omega\times \Omega\to\R_{\geq 0}$. A \emph{graphon} on $(\Omega,\mu)$ is any kernel $W$ with codomain $[0,1]$.
The \emph{cut norm} of $W\in L^1(\Omega,\mu)$ is the number
\[
\|W\|_{\Box,\Omega,\mu}:=\sup_{S,T\subset \Omega}\left|\iint_{S\times T}W(x,x')\,\mu(dx)\,\mu(dx')\right|,
\]
where the supremum is over measurable sets $S,T$.

Now, given two probability spaces $(\Omega,\mu)$ and $(\Omega',\mu')$, and kernels $U$ on $\Omega$ and $U'$ on $\Omega'$  one defines the \emph{cut distance} between $(\Omega,U,\mu)$ and $(\Omega',U',\mu)$ as \citep[Theorem 6.9]{janson2010graphons}
$$d_\Box(\Omega,\Omega'):=\inf_{\pi \in \mathcal{C}(\mu,\mu')}\left\|U-U'\right\|_{\Box,\Omega\times\Omega',\pi}.$$

We note that the cut distance defines a compact metric space, while GW-type metrics do not in general exhibit completeness (see \citealt[Remark 5.18]{memoli2011}), and therefore do not lead to compactness. In particular, the $\delta_1$ metric considered by Janson in \citep{janson2010graphons} is precisely a $Z$-Gromov-Wasserstein distance for $Z=[0,1]$ and it is not compact, as mentioned in \citet[p15]{janson2010graphons}. In fact, the topology induced by $\delta_1$ is strictly finer than the one induced by the cut distance. The structural discrepancy lies in how kernel values are compared: the $Z$-Gromov-Wasserstein distance aggregates  kernel values $\omega_X, \omega_Y$ via the distance function $d_Z(\omega_X,\omega_Y)$ and then considers the $L^p$ norm of this quantity. On the other hand, the cut distance aggregates kernel values by taking their differences and then computing the cut norm. 


While not exactly fitting into our framework, it seems interesting to explore the possibility of formulating an even more general setting than the one in \Cref{def:ZGW_distance} which could (precisely) encompass the cut distance. For example, considering both an abstract ``aggregator" of kernel values and  an arbitrary norm for the resulting \emph{aggregated} quantity could lead to a framework subsuming the $Z$-Gromov-Wasserstein distance as well as the cut distance.



\subsection{Further Examples of \texorpdfstring{$Z$}{Z}-Valued Measure Networks}\label{sec:other_examples}
Next, we give several more examples of naturally occurring $Z$-valued measure networks. We believe that the resulting $Z$-GW distances are novel, but point to related notions in the literature when appropriate.

\subsubsection{An Alternative Approach to Edge-Attributed Graphs}\label{sec:alternative_approach}

In \Cref{def:attributed_network}, we gave a flexible model for graphs with node and edge attributions, which we referred to as \emph{attributed networks}, following~\citet{yang2023exploiting}. However, the formalism developed there may be unsatisfactory in practice, as we now explain. Consider a directed graph $(X,E)$, where $X$ is a set of nodes and $E \subset X \times X$ is a set of directed edges (this also captures the notion of an undirected graph, which we consider as a directed graph with a symmetric edge set). Realistic graphical data frequently comes with edge attributes (see the discussion at the beginning of Section~\ref{sec:attributed_graphs})---in practice, this is given by a function from $E$ into some attribute space $(\Omega,d_\Omega)$. Now observe that the formalism described in \Cref{def:attributed_network} (which originates from~\citealt{yang2023exploiting}) models edge attributes as functions of the form $\omega_X:X \times X \to \Omega$; that is, the function is not only defined on the edge set, but on the full product space $X \times X$. This difference can be handled in an ad hoc manner: for example, if $\Omega = \R^d$, one could assign pairs $(x,x') \in (X \times X) \setminus E$ to the zero vector. Such a choice is essentially arbitrary, and does not naturally extend to a more general metric space $\Omega$. We now describe a more principled approach to handling this issue via \emph{cone metrics}.

Let $(\Omega,d_\Omega)$ be a metric space, which we will later consider as an edge attribute space. The \define{cone space}~\citep[Section 3.6.2]{burago2022course} of $\Omega$ is 
\begin{equation}\label{eqn:quotient_space_construction}
\mathrm{Con}(\Omega):=(\Omega \times \R_{\geq 0})/(\Omega \times \{0\}).
\end{equation}
For $(u,r) \in \Omega \times \R_{\geq 0}$, its equivalence class $[u,r]$ consists of points $(v,s)$ such that $(u,r) = (v,s)$ or $r = s = 0$.  The \define{cone metric} on $\mathrm{Con}(\Omega)$ is
\[
d_{\mathrm{Con}(\Omega)}([u,r],[v,s]) := \left(r^2 + s^2 - 2rs \cos(\bar{d}_\Omega(u,v))\right)^{1/2},
\]
where $\bar{d}_\Omega(u,v) = \min \{d_\Omega(u,v),\pi \}$.

Now let $(X,E)$ be a directed graph endowed with an edge attribute function $\hat{\omega}_X:E \to \Omega$ and a network kernel $\phi_X:X \times X \to \R_{\geq 0}$ such that $\phi_X(x,x') > 0$ if and only if $(x,x') \in E$; this can either be data-driven, or derived directly from the combinatorial structure of the graph by taking $\phi_X$ to be the directed binary adjacency function induced by $E$. Let $(Z,d_Z) = (\mathrm{Con}(\Omega),d_{\mathrm{Con}(\Omega)})$. For any choice of probability measure $\mu_X$, the associated \define{$Z$-network induced by $(X,E)$} is $(X,\omega_X,\mu_X)$, with $\omega_X:X \times X \to \mathrm{Con}(\Omega)$ defined by
\[
\omega_X(x,x') := \left\{\begin{array}{rl}
[\hat{\omega}(x,x'),\phi_X(x,x')] & \mbox{if } (x,x') \in E \\
\left[p_0, 0\right] & \mbox{otherwise,}
\end{array}\right.
\]
where $p_0 \in \Omega$ is a fixed but arbitrary point. The idea is that this extends the original attribute function $\hat{\omega}_X$ (whose domain is $E$) to the full product space $X \times X$, but the cone construction results in the choice of assignment for  $(x,x') \notin E$ being inconsequential. Node attributes for the graph can be handled as in \Cref{def:attributed_network}, so that the structure defined here gives a complete refinement of the previous attributed network structure.

\Needspace{18\baselineskip}
\begin{wrapfigure}[17]{r}{0.35\textwidth}
  \centering
    \includegraphics[width=0.27\textwidth]{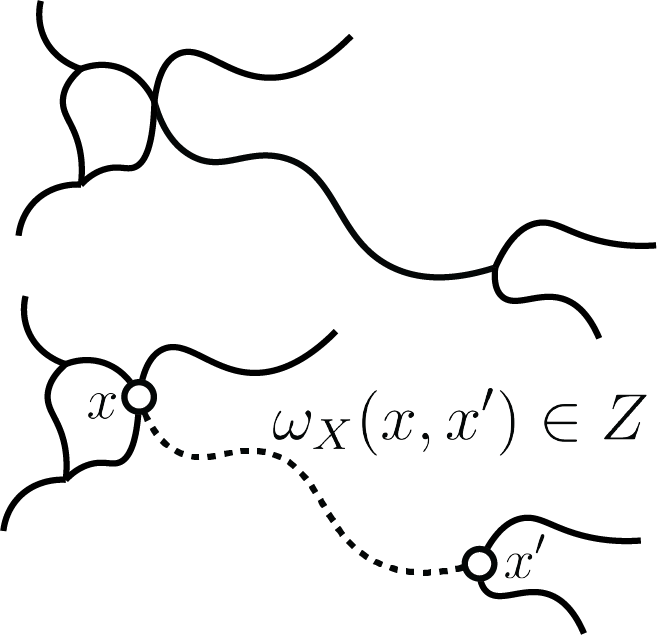}
  \vspace{.2in}
  \caption{\small{Top: Simple example of a shape graph in $\R^2$. Bottom: Representation as an attributed network. The intersection points $x$ and $x'$ (circles) are nodes in $X$, $\omega_X(x,x')$ (dashed) is an element of the space of curves $Z$.}}
  \label{fig:shape_graph}
\end{wrapfigure}

The next two subsections provide examples in the literature of $Z$-networks arising from attributed graph structures. In particular, a similar idea was used by~\citet{bal2024statistical} to model \emph{shape graphs}, described in detail below, where this setup was implemented computationally. 

 \subsubsection{Shape Graphs}\label{sec:shape_graphs}

 There is a growing body of work on statistical analysis of \emph{shape graphs}~\citep{sukurdeep2022new,liang2023shape,guo2022statistical,srivastava2020advances,bal2024statistical}---roughly, these are 1-dimensional stratified spaces embedded in some Euclidean space, relevant for modeling filamentary structures such as arterial systems or road networks (see~\Cref{fig:shape_graph}). We now explain how some models of shape graphs fit into the $Z$-GW framework. 

 Let $Z$ denote the \emph{space of unparameterized curves} in $\R^d$, where $d \in \{2,3\}$ in typical applications; that is, $Z = C^\infty([0,1],\R^d)/\mathrm{Diff}([0,1])$, the space of smooth maps of the interval into $\R^d$, denoted $C^\infty([0,1],\R^d)$, considered up to the reparameterization action of the diffeomorphism group of the interval, denoted $\textrm{Diff}([0,1])$. Let $d_Z$ be some fixed metric on $Z$; in the references considered here, this is typically induced by a diffeomorphism-invariant \emph{elastic metric}~\citep{srivastava2016functional,bauer2024elastic}, but the specific choice of metric is not important in the discussion to follow.

 \citet{guo2022statistical} modeled a shape graph as a function of the form  $\omega_X:X \times X \to Z$, where $X$ is a finite set of nodes. In practice, given a filamentary structure such as an arterial system, the nodes are the endpoints and intersection points of filaments and the value of $\omega_X(x,x') \in Z$ is the (unparameterized) filament joining the endpoints, as is illustrated in \Cref{fig:shape_graph}.  For pairs $(x,x')$ which are not joined by a curve, the authors choose to assign the constant curve taking the value $0 \in \R^d$. In an effort to overcome the shortcomings of this ad hoc assignment, the followup work~\citep{bal2024statistical} introduces a formalism similar to that of \Cref{sec:alternative_approach} for representing shape graphs, where the particular point in $Z$ assigned to missing edges is made irrelevant to distance computations via a quotient construction. Either method for modeling shape graphs results in a $Z$-network by, say, assigning the node set the uniform measure.

 In both~\citet{guo2022statistical} and \citet{bal2024statistical}, comparisons of shape graphs were done via the Jain and Obermayer framework~\citep{jain2009structure} described earlier. Specifically, for shape graphs with the same number of nodes, the distance between them is computed by aligning the $Z$-valued kernels over permutations. When comparing two shape graphs with node sets of different sizes, the smaller of the two is ``padded" with additional disconnected nodes. This padding step is arguably unnatural, and can be avoided by instead working within the $Z$-GW framework, where differently sized node sets are not an issue.

 \subsubsection{Connection Graphs}

Let $(X,E)$ be a directed graph such that $(x,x') \in E \Leftrightarrow (x',x) \in E$ (i.e., $(X,E)$ uses the directed graph formalism to encode an inherently undirected graph). A \define{connection} on $(X,E)$~\citep{robertson2023generalization} is a map $\hat{\omega}_X:E \to \mathrm{O}(d)$ such that $\hat{\omega}(x,x') = \hat{\omega}(x',x)^{-1}$, where $\mathrm{O}(d)$ is the orthogonal group of Euclidean space $\R^d$. These structures appear, for example, in alignment problems coming from cryo-electron microscopy~\citep{bhamre2015orthogonal,singer2011angular}. Let $d_{\mathrm{O}(d)}$ be the metric on $\mathrm{O}(d)$ induced by identifying it with a subspace of the matrix space $\R^{d \times d}$, endowed with Frobenius distance. For a choice of measure $\mu_X$, one obtains a $Z$-network $(X,\omega_X,\mu_X)$, with $(Z,d_Z) = (\mathrm{O}(d),d_{\mathrm{O}(d)})$ and with $\omega_X:X \times X \to Z$ defined as in~\Cref{sec:alternative_approach}.

 \subsubsection{Binary Operations} 
 
 Let $(Z,d_Z)$ be a metric space endowed with a binary operation $\bullet: Z \times Z \to Z$. For any subset $X \subset Z$, let $\omega_X:X \times X \to Z$ be defined by $\omega_X(x,x') = x \bullet x'$. A choice of probability measure $\mu_X$ results in a $Z$-network $(X,\omega_X,\mu_X)$.

 As an example of the above construction, suppose that $Z$ is a compact Lie group endowed with a bi-invariant Riemannian metric, and let $\bullet$ denote its group law. Given a compact Lie subgroup $X \subset Z$, let $\mu_X$ denote its normalized Haar measure. The $Z$-GW distance then defines a natural distance on the space of compact Lie subgroups of $Z$.

\subsubsection{Probabilistic Metric Spaces}\label{sec:probabilistic_metric_spaces}

 In his 1942 paper \citep{menger1942statistical}, Menger initiated the study of \emph{probabilistic metric spaces}; see also \citep{wald1943statistical,kramosil1975fuzzy,schweizer1960statistical}. A
\define{probabilistic metric space} is a pair $(X,p_X)$ in which the ``distance" $p_X(x,x')$
between any two points $x,x'\in X$ is a Borel probability measure  on $\R_{\geq 0}$ satisfying the following axioms:
\begin{enumerate}
\item $x=x'$ if and only if  $p_X(x,x')=\delta_0$;
\item $p_X(x,x') = p_X(x',x)$ for all $x,x'\in X$;
\item $\min\big(p_X(x,x')([0,s]),p_X(x',x
'')([0,t])\big) \leq p_X(x,x'')([0,s+t])$ for all $x,x',x''\in X$ and all $s,t\in\R_{\geq 0}$.
\end{enumerate}

Note that axiom 3 is a generalization of the triangle inequality: if $d_X$ is a metric on $X$ and distances are \emph{deterministic}, i.e., $p_X(x,x') = \delta_{d_X(x,x')}$ for all $x,x'$, then this condition is equivalent to the triangle inequality for $d_X$.

\medskip

We can recast probabilistic metric spaces as $Z$-networks for the choice 
\[
Z = \{\text{Borel probability measures on $\R_{\geq 0}$ satisfying axioms 1,2, and 3}\}
\]
and for $d_Z = \mathrm{W}_p$, the Wasserstein distance (of order $p\geq 1$) on $\R_{\geq 0}$. Probabilistic metric spaces can therefore be compared via the associated $Z$-GW distance.

\section{Properties of \texorpdfstring{$Z$}{Z}-Gromov-Wasserstein Distances}
\label{sec: metric_properties}

This section establishes basic metric properties of the $Z$-GW distances, as well as the induced topological properties of the space of $(Z,p)$-networks. As a result, we obtain a unified proof that these properties hold for the distances from the literature described in \Cref{sec:existing_ZGW} (summarized in \Cref{thm:ZGW_generalizes} and \Cref{tab:metrics}), and a simultaneous proof for the distances in the novel settings described in \Cref{sec:other_examples}.

\subsection{Basic Metric Structure}

We begin by showing that $Z$-GW distances are metrics when the space of $(Z,p)$-networks is considered up to a natural notion of equivalence.

\subsubsection{Existence of Optimal Couplings}
In this section, we will study the important question of whether the optimization problem defining the $Z$-GW distance always has a solution. Our main result of this part is the following theorem, which gives an affirmative answer in the general setting of the present article:
\begin{theorem}
    \label{thm: optimal coupling}
    For any $Z$-networks $X,Y $ and any $p \in [1,\infty]$, there exists $\pi \in \mathcal{C}(\mu_X,\mu_Y)$ such that $\dgwz(X,Y) = \frac{1}{2}\mathrm{dis}_p^Z(\pi)$. That is, optimal couplings always exist.
\end{theorem}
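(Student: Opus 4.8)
The plan is to use the direct method. Fix $X,Y\in\MZp$ and a minimizing sequence $(\pi_n)\subset\mathcal{C}(\mu_X,\mu_Y)$ with $\mathrm{dis}_p^Z(\pi_n)\to 2\,\dgwz(X,Y)$. First I would record that the objective is always finite: by the triangle inequality for $d_Z$ and the fact that the $(x,x')$- and $(y,y')$-marginals of $\pi\otimes\pi$ are the fixed measures $\mu_X\otimes\mu_X$ and $\mu_Y\otimes\mu_Y$, one gets $\mathrm{dis}_p^Z(\pi)\le\|d_Z(\omega_X,z_0)\|_{L^p(\mu_X\otimes\mu_X)}+\|d_Z(\omega_Y,z_0)\|_{L^p(\mu_Y\otimes\mu_Y)}<\infty$ for every coupling. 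Next, since $X,Y$ are Polish and $\mu_X,\mu_Y$ are (tight) Borel probability measures, tightness of the marginals forces tightness of $\mathcal{C}(\mu_X,\mu_Y)$, which is moreover weakly closed because the marginal constraints pass to weak limits; by Prokhorov's theorem it is weakly sequentially compact. Passing to a subsequence, I may assume $\pi_n\rightharpoonup\pi$ with $\pi\in\mathcal{C}(\mu_X,\mu_Y)$, and the theorem reduces to the weak lower semicontinuity estimate $\mathrm{dis}_p^Z(\pi)\le\liminf_n\mathrm{dis}_p^Z(\pi_n)$.

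The essential difficulty is that the cost $(x,y,x',y')\mapsto d_Z(\omega_X(x,x'),\omega_Y(y,y'))$ is only measurable, as $\omega_X,\omega_Y$ are merely $L^p$ maps, so the standard lower semicontinuity for a continuous integrand does not apply directly. My key reduction exploits that the two kernels live in separate coordinates. For any measurable approximants $\tilde\omega_X,\tilde\omega_Y$ and any coupling $\pi'$, the triangle inequality in $L^p(\pi'\otimes\pi')$ together with the fixed-marginal identity gives
\[
\Big|\,\mathrm{dis}_p^Z(\pi') - \big\|d_Z(\tilde\omega_X,\tilde\omega_Y)\big\|_{L^p(\pi'\otimes\pi')}\Big| \le \big\|d_Z(\omega_X,\tilde\omega_X)\big\|_{L^p(\mu_X\otimes\mu_X)} + \big\|d_Z(\omega_Y,\tilde\omega_Y)\big\|_{L^p(\mu_Y\otimes\mu_Y)} =: E,
\]
where the right-hand side is \emph{independent of} $\pi'$ because $d_Z(\omega_X,\tilde\omega_X)$ depends only on the $X$-coordinates, whose $\pi'\otimes\pi'$-law is always $\mu_X\otimes\mu_X$ (and likewise for $Y$). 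Thus $\mathrm{dis}_p^Z$ is, uniformly over all of $\mathcal{C}(\mu_X,\mu_Y)$, an $E$-approximation of the leading-term functional $\pi'\mapsto\|d_Z(\tilde\omega_X,\tilde\omega_Y)\|_{L^p(\pi'\otimes\pi')}$. If this leading functional is weakly lower semicontinuous and $E$ can be made arbitrarily small by a suitable choice of approximants, then $\mathrm{dis}_p^Z$ is weakly lower semicontinuous, completing the argument.

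I expect the weak lower semicontinuity of the leading-term functional to be the main obstacle, since it again confronts the discontinuity of the kernels. To handle it I would choose the approximants so that their induced cost is continuous off a set that every coupling charges in a controlled way: by Lusin's theorem (applicable since $\mu_X\otimes\mu_X$ is Radon on the Polish space $X\times X$ and $Z$ is separable), $\omega_X$ is continuous on a compact set whose complement has arbitrarily small $\mu_X\otimes\mu_X$-measure, and---crucially---the fixed marginals make this exceptional set have the same small $\pi'\otimes\pi'$-measure for \emph{every} coupling; the $L^p$-density results of \Cref{prop: lp complete_separable} (in particular the grid approximation of \Cref{prop: lp_grid_approx}, after reducing to a standard space such as $[0,1]^d$ with Lebesgue measure) then make $E$ small. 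When the marginals are atomless, the discontinuities of grid-type approximants lie on coordinate hyperplanes of measure zero for every coupling, so the leading functional is in fact weakly continuous by the portmanteau theorem; atoms contribute only a finite-dimensional part on which the cost is a polynomial in the coupling and hence continuous. The bookkeeping needed to combine these pieces into a clean lower semicontinuity statement is the technical heart of the proof.

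Finally, the case $p=\infty$ reduces to the finite case. For any probability measure $\nu$ and measurable $f$ one has $\|f\|_{L^\infty(\nu)}=\lim_{q\to\infty}\|f\|_{L^q(\nu)}$ and $\|f\|_{L^q(\nu)}\le\|f\|_{L^\infty(\nu)}$. Applying the finite-$p$ lower semicontinuity for each $q<\infty$ and letting $q\to\infty$ yields
\[
\big\|d_Z(\omega_X,\omega_Y)\big\|_{L^\infty(\pi\otimes\pi)} = \lim_{q\to\infty}\big\|d_Z(\omega_X,\omega_Y)\big\|_{L^q(\pi\otimes\pi)} \le \liminf_n\big\|d_Z(\omega_X,\omega_Y)\big\|_{L^\infty(\pi_n\otimes\pi_n)},
\]
so $\mathrm{dis}_\infty^Z$ is weakly lower semicontinuous as well, and the same compactness argument produces an optimal coupling.
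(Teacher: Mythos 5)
Your proposal is correct in its essential structure, but it organizes the argument differently from the paper. The paper also compactifies $\mathcal{C}(\mu_X,\mu_Y)$ by Prokhorov and reduces to (semi)continuity of $\pi\mapsto\mathrm{dis}_p^Z(\pi)$, but it establishes full weak continuity via a single general lemma (\Cref{lem: santam1.8general}, a Gromov--Wasserstein version of Santambrogio's Lemma 1.8): Lusin's theorem is applied directly to the original kernels $\omega_X,\omega_Y$, and the resulting exceptional set is shown to have uniformly small $\pi\otimes\pi$-measure over all couplings precisely because the $X$- and $Y$-marginals of $\pi\otimes\pi$ are fixed --- the same structural fact your uniform error bound $E$ exploits. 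Your route instead proves only lower semicontinuity, obtained as a uniform limit: you approximate the kernels in $L^p$ by grid-piecewise-constant kernels (\Cref{prop: lp_grid_approx}, after parametrizing over $[0,1]$ via \Cref{rem: interval representation}), for which the induced cost is bounded with discontinuity set contained in finitely many coordinate hyperplanes that are null for $\pi\otimes\pi$ under \emph{every} coupling, so portmanteau gives weak continuity of the leading functional; the coupling-independent error $E$ then transfers semicontinuity to $\mathrm{dis}_p^Z$. This buys you a cleaner treatment of unboundedness (the approximants are automatically bounded, so no truncation of the cost is needed) and correctly sidesteps the obstruction the paper flags --- continuous $Z$-valued functions need not be $L^p$-dense (e.g.\ $Z=\{0,1\}$) --- by using simple rather than continuous approximants. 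What it costs you is some bookkeeping you currently gloss over: the reduction to $[0,1]$ requires knowing that attainment of the infimum transfers along parametrizations (this follows from \Cref{lem:lemma28}, since pushforward along the parameters is a distortion-preserving surjection of coupling sets), and your paragraph conflates the Lusin route with the grid route --- only the latter is needed once you work over $([0,1],\mathscr{L})$, and the aside about atoms becomes moot there. Your $p=\infty$ reduction via $\|f\|_{L^\infty}=\lim_{q\to\infty}\|f\|_{L^q}$ matches the paper's treatment.
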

Our proof strategy is as follows: since $\mathcal{C}(\mu_X,\mu_Y)$ is sequentially compact with respect to the weak convergence of probability measures \citep[p. 32]{villani2003topics}, it is enough to prove that the distortion functional $\disp^Z$ is continuous in this topology. This approach was also considered in previous works such as \citet{chowdhury2019gromov} and \citet{memoli2011}, but the same results do not apply in our general setting. For example, \citet{memoli2011} heavily utilizes the fact that the kernels are metrics, and \citet{chowdhury2019gromov} use the fact that the bounded continuous $\mathbb{R}$-valued functions are dense in $L^p$ spaces, which is not true for a general target metric space (consider $Z=\{0,1\}$).  The corresponding result for the fused network GW distances (\Cref{def:fused_network_GW}) was established by~\citet{yang2023exploiting}. In light of \Cref{prop:FNGW_equals_ZGW}, and the fact that~\citet{yang2023exploiting} considered more restrictive structures with bounded and continuous kernels, our result further generalizes the result of~\citet[Theorem 2.4]{yang2023exploiting}.

To deal with our case, we utilize the following technical lemma, which generalizes the result by \citet[Lemma 1.8]{santambrogio2015optimal} to the Gromov-Wasserstein setting. The proof appears in \Cref{app: santam1.8general}.

\begin{lemma}
    \label{lem: santam1.8general}
    Let $X$ and $Y$ be Polish spaces endowed with probability measures $\mu$ and $\nu$, respectively, and suppose $\gamma_n, \gamma \in \mathcal{C}(\mu,\nu)$. Let $a:X\times X \to \tilde{X}$ and $b:Y\times Y\to \tilde{Y}$ be measurable maps valued in  separable metric spaces $\tilde{X}$ and $\tilde{Y}$. Finally, let $c:\tilde{X}\times \tilde{Y}\to [0,\infty)$ be a continuous function such that $c(a,b)\leq f(a)+g(b)$ for some continuous maps $f$ and $g$ satisfying $\int (f\circ a)d(\mu\otimes \mu), \int (g\circ b)d(\nu\otimes \nu) <+\infty$. Then, $\gamma_n\to \gamma$ weakly implies
    \begin{align}
        &\int_{X\times Y}\int_{X\times Y}c(a(x,x'),b(y,y'))\gamma_n(dx \times dy) \gamma_n (dx' \times dy') \\
        & \qquad \qquad \qquad \qquad \to \int_{X\times Y}\int_{X\times Y}c(a(x,x'),b(y,y'))\gamma(dx \times dy) \gamma(dx' \times dy')
    \end{align}
   \begin{equation}
    \end{equation}
\end{lemma}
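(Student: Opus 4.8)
The plan is to prove the convergence by combining a truncation argument (to reduce to bounded integrands) with a Lusin--Tietze approximation (to cope with the fact that $a$ and $b$ are merely measurable). The single structural fact that keeps every estimate uniform in $n$ is that the $(x,x')$-marginal of $\gamma_n \otimes \gamma_n$ equals $\mu \otimes \mu$ and its $(y,y')$-marginal equals $\nu \otimes \nu$ for every $n$ (and for the limit $\gamma$), since each factor of $\gamma_n$ has marginals $\mu,\nu$ and the two factors are independent. Writing $\Theta(x,y,x',y') = (a(x,x'),b(y,y'))$ and $H = c \circ \Theta$, I must show $\int H\,d(\gamma_n\otimes\gamma_n) \to \int H\,d(\gamma\otimes\gamma)$. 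First I would reduce to $f,g \ge 0$: since $c \ge 0$ and $c(\alpha,\beta) \le f(\alpha)+g(\beta)$ for all $\alpha,\beta$, fixing base points $\alpha_0,\beta_0$ shows $f \ge -g(\beta_0)$ and $g \ge -f(\alpha_0)$ pointwise, so subtracting these constants gives nonnegative continuous $\hat f,\hat g$ with $\hat f\circ a,\hat g\circ b$ still integrable and $c \le \hat f+\hat g$ (the leftover constant is nonpositive because $c(\alpha_0,\beta_0)\ge0$). Setting $\Psi := f\circ a + g\circ b \ge 0$, the key observation is that $\int \Psi\,d(\gamma_n\otimes\gamma_n) = \int (f\circ a)\,d(\mu\otimes\mu) + \int (g\circ b)\,d(\nu\otimes\nu)$ is a finite constant independent of $n$; in particular every $H$ is integrable and dominated by $\Psi$.

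Next comes the truncation. With $c_M := \min(c,M)$ bounded and continuous and $H_M := c_M\circ\Theta$, one has $0 \le H - H_M \le \Psi\,\mathbf{1}_{\{\Psi > M\}}$, so the tail terms $\int (H-H_M)\,d(\gamma_n\otimes\gamma_n)$ are bounded by $\sup_n \int \Psi\,\mathbf{1}_{\{\Psi>M\}}\,d(\gamma_n\otimes\gamma_n)$. I would show this supremum tends to $0$ as $M\to\infty$, i.e.\ that $\{\Psi\}$ is uniformly integrable along the family $\gamma_n\otimes\gamma_n$. Using $\{\Psi>M\}\subseteq\{f\circ a>M/2\}\cup\{g\circ b>M/2\}$, the two diagonal pieces $\int (f\circ a)\mathbf{1}_{\{f\circ a>M/2\}}$ and $\int (g\circ b)\mathbf{1}_{\{g\circ b>M/2\}}$ are $n$-independent (they are computed against the fixed marginals) and vanish as $M\to\infty$ by dominated convergence. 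The cross pieces, e.g.\ $\int (g\circ b)\mathbf{1}_{\{f\circ a>M/2\}}\,d(\gamma_n\otimes\gamma_n)$, are where both marginals are used at once: $g\circ b$ has the fixed law $b_\ast(\nu\otimes\nu)$ under every $\gamma_n\otimes\gamma_n$, so the family is uniformly integrable in the absolute-continuity sense, while $(\gamma_n\otimes\gamma_n)(\{f\circ a>M/2\}) = (\mu\otimes\mu)(\{f\circ a>M/2\})\to0$ uniformly in $n$ by Markov's inequality; combining these forces the cross terms to $0$ uniformly.

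Finally, for each fixed $M$ I would prove $\int H_M\,d(\gamma_n\otimes\gamma_n)\to\int H_M\,d(\gamma\otimes\gamma)$ despite $H_M$ being discontinuous. Apply Lusin's theorem to the Borel maps $a,b$ (the measures $\mu\otimes\mu,\nu\otimes\nu$ are Radon on Polish spaces, and $\tilde X,\tilde Y$ are separable metric) to obtain compact sets $K_X,K_Y$ with $(\mu\otimes\mu)(K_X^c),(\nu\otimes\nu)(K_Y^c)<\delta$ on whose complements things are small and on which $a,b$ are continuous. On the closed set $S=\{(x,x')\in K_X,\ (y,y')\in K_Y\}$ the real-valued map $H_M$ is continuous and bounded by $M$, so by the Tietze extension theorem, applied to the real function $H_M|_S$ so that no extension of the metric-space-valued maps $a,b$ is required, there is a continuous $\widetilde H:(X\times Y)^2\to[0,M]$ agreeing with $H_M$ on $S$. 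Then $\int \widetilde H\,d(\gamma_n\otimes\gamma_n)\to\int\widetilde H\,d(\gamma\otimes\gamma)$ because $\gamma_n\to\gamma$ weakly forces $\gamma_n\otimes\gamma_n\to\gamma\otimes\gamma$ weakly, whereas $\bigl|\int(H_M-\widetilde H)\,d(\gamma_n\otimes\gamma_n)\bigr|\le 2M\,(\gamma_n\otimes\gamma_n)(S^c)\le 4M\delta$ uniformly in $n$ by the fixed marginals, and the same bound holds against $\gamma\otimes\gamma$. Sending $\delta\to0$ settles the fixed-$M$ statement, and an $\varepsilon/3$ argument across the two tail terms and the fixed-$M$ term completes the proof.

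The step I expect to be the main obstacle is the uniform-integrability estimate for the cross terms: because $\gamma_n\otimes\gamma_n$ pins down only the two bivariate marginals and not the joint law of $(a(x,x'),b(y,y'))$, one cannot treat $f\circ a+g\circ b$ as a single well-behaved function, and the argument must exploit the fixed marginals separately for the indicator factor and for the integrand factor. A secondary subtlety is the mere measurability of $a,b$, which I resolve by extending the real-valued composition $H_M$ via Tietze rather than attempting to extend the metric-valued maps themselves.
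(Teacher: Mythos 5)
Your proof is correct, and it rests on the same two load-bearing facts as the paper's argument: Lusin's theorem to tame the merely measurable maps $a$ and $b$, and the observation that the $(x,x')$- and $(y,y')$-marginals of $\gamma_n\otimes\gamma_n$ are the fixed measures $\mu\otimes\mu$ and $\nu\otimes\nu$, which makes every exceptional-set and domination estimate uniform in $n$. The execution, however, genuinely differs in two places. For the bounded (truncated) part, the paper keeps the discontinuous integrand $\mathbf{1}_K\cdot c(a,b)$, observes that it is upper semicontinuous, invokes the portmanteau inequality for upper semicontinuous functions bounded above, and then repeats the argument with $M-c$ to obtain the reverse inequality; you instead upgrade $H_M$ to a genuinely continuous bounded function via a Tietze extension off the Lusin set and test weak convergence of $\gamma_n\otimes\gamma_n$ directly, paying only a $4M\delta$ discrepancy that the fixed marginals control uniformly in $n$. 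For the unbounded part, the paper again argues by semicontinuity twice (monotone truncation for the lower estimate, and the substitution $c\mapsto f(a)+g(b)-c$ for the upper one, exploiting that $\int\bigl(f\circ a+g\circ b\bigr)\,d(\gamma_n\otimes\gamma_n)$ is constant in $n$); you instead prove a uniform integrability estimate for $\Psi=f\circ a+g\circ b$ along the family $\gamma_n\otimes\gamma_n$, splitting the tail event $\{\Psi>M\}$ and handling the cross terms by combining the $n$-independent law of $g\circ b$ with the Markov bound on $(\mu\otimes\mu)(\{f\circ a>M/2\})$ --- which is the correct way to exploit that only the two bivariate marginals, not the joint law of $(a,b)$, are pinned down. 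Your route is more quantitative and delivers two-sided convergence in a single pass; the paper's is shorter because the portmanteau inequality absorbs the one-sided estimates, at the cost of running the argument twice. Both are sound.
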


\begin{proof}of \Cref{thm: optimal coupling}.
    As mentioned earlier, it is enough to prove that $\mathcal{C}(\mu_X,\mu_Y)\ni \pi \mapsto \disp^Z(\pi)$ is continuous with respect to the topology of weak convergence. We can apply \Cref{lem: santam1.8general} by setting 
    \begin{equation} 
  \begin{gathered}
    \tilde{X}=\tilde{Y}=Z, \quad  a=\omega_X, \quad b=\omega_Y, \quad \mu=\mu_X, \quad \nu=\mu_Y,       \\
    c(a,b)=d(\omega_X,\omega_Y)^p, \quad  f(a)=2^{p-1}d_Z(a,z)^p, \quad  g(b)=2^{p-1}d_Z(b,z)^p,
  \end{gathered}
\end{equation}
for some fixed (but arbitrary) point $z\in Z$. The general estimate $(s+t)^p\leq 2^{p-1}(s^p+t^p)$, for $s,t\geq 0$, combined with the triangle inequality, shows that 
\[
d(\omega_X,\omega_Y)^p \leq 2^{p-1}d_Z(\omega_X,z)^p+2^{p-1}d_Z(\omega_Y,z)^p,
\]
so the assumption $c(a,b)\leq f(a)+g(b)$ is satisfied. The integrability of $f\circ a$ and $g\circ b$ is satisfied by the $L^p$ assumption on $a=\omega_X$ and $b=\omega_Y$, so all assumptions are clear. Thus, $\disp$ is continuous, and an optimal coupling exists for $1\leq p<\infty$. Since the $p=\infty$ case can be seen as the supremum of $\disp$ for $p<\infty$, we have lower semicontinuity, and an optimal coupling exists for $p=\infty$.
\end{proof}

\subsubsection{\texorpdfstring{$Z$}{Z}-Gromov-Wasserstein Distance Is a Metric}

It turns out that the $Z$-network Gromov-Wasserstein distances are only pseudometrics; that is, a zero distance does not imply that two $(Z,p)$-networks are equal. Similar to the argument by \citet{chowdhury2019gromov}, the following condition characterizes the equivalence class of the set of networks with zero distance, as we will prove below. Given functions $\omega:Y \times Y \to Z$ and $\phi:X \to Y$, we define the \define{pullback of $\omega$ by $\phi$} to be the function $\phi^\ast \omega:X \times X \to Z$ defined by 
\[
\phi^\ast \omega(x,x') = \omega(\phi(x),\phi(x')).
\]

\begin{definition}[Weak Isomorphism]\label{def:weak_isomorphism}
    A pair of $(Z,p)$-networks $X = (X,\omega_X,\mu_X)$ and $Y=(Y,\omega_Y,\mu_Y)$ is defined to be \define{weakly isomorphic} if there exists a $(Z,p)$-network $W = (W,\omega_W,\mu_W)$, and maps $\phi_X:W \to X$ and $\phi_Y:W \to Y$ such that
    \begin{itemize}
        \item $\phi_X$ and $\phi_Y$ are measure-preserving, and
        \item $\phi_X^\ast \omega_X = \phi_Y^\ast \omega_Y = \omega_W$ $\mu_W \otimes \mu_W$-almost everywhere. That is,
              \[
                  \omega_W(w,w') = \omega_X(\phi_X(w),\phi_X(w')) = \omega_Y(\phi_Y(w),\phi_Y(w'))
              \]
              for $\mu_W \otimes \mu_W$-almost every pair $(w,w') \in W \times W$.
    \end{itemize}
    If $X$ and $Y$ are weakly isomorphic, write $X \sim Y$.

    Let $\M^{Z,p}$ denote the \define{space of $(Z,p)$-networks}, and let $\MZp$ denote the \define{space of $(Z,p)$-networks considered up to weak isomorphism} (i.e., the quotient of $\M^{Z,p}$ by the weak isomorphism equivalence relation). 
\end{definition}

 In the following, we make no distinction between a $Z$-network $(X,\omega,\mu)$ and its equivalence class in $\MZp$, unless necessary.

We will now prove that the Gromov-Wasserstein distances define metrics up to weak isomorphism:
\begin{theorem}\label{thm:ZGW_is_a_metric}
    For any separable metric space $(Z,d_Z)$, $\dgwz$ induces a metric on $\MZp$.
\end{theorem}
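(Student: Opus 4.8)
<br>

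The plan is to establish the three metric axioms for $\dgwz$ on the quotient space $\MZp$: symmetry, the triangle inequality, and the fact that $\dgwz(X,Y) = 0$ if and only if $X \sim Y$ (i.e., positive-definiteness on the quotient). Symmetry is immediate from the definition, since $d_Z$ is symmetric and couplings can be transposed: if $\pi \in \mathcal{C}(\mu_X,\mu_Y)$ then its pushforward under the swap map lies in $\mathcal{C}(\mu_Y,\mu_X)$ and realizes the same distortion. The real content lies in the triangle inequality and in the distance-zero characterization.

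For the triangle inequality, the standard approach (as in \cite{chowdhury2019gromov,memoli2011}) is a \emph{gluing} argument. Given $X, Y, W \in \MZp$ with optimal (or near-optimal) couplings $\pi \in \mathcal{C}(\mu_X,\mu_Y)$ and $\rho \in \mathcal{C}(\mu_Y,\mu_W)$, I would invoke the gluing lemma for probability measures on Polish spaces to produce a measure $\sigma$ on $X \times Y \times W$ whose $(X,Y)$-marginal is $\pi$ and whose $(Y,W)$-marginal is $\rho$. Let $\tau \in \mathcal{C}(\mu_X,\mu_W)$ be the $(X,W)$-marginal of $\sigma$. The key estimate is then
\[
\mathrm{dis}_p^Z(\tau) \leq \mathrm{dis}_p^Z(\pi) + \mathrm{dis}_p^Z(\rho),
\]
which follows by viewing $d_Z \circ (\omega_X \times \omega_W)$ on $\mathrm{supp}(\sigma \otimes \sigma)$, applying the triangle inequality for $d_Z$ pointwise through the intermediate values $\omega_Y(y,y')$, and then using Minkowski's inequality in $L^p(\sigma \otimes \sigma)$. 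Here the existence of optimal couplings (\Cref{thm: optimal coupling}) conveniently lets me work with genuine minimizers rather than infimizing sequences, streamlining the argument. The finiteness of all distortions is guaranteed by the $L^p$ assumption built into the definition of a $(Z,p)$-network.

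The main obstacle is the distance-zero characterization: showing $\dgwz(X,Y) = 0 \Rightarrow X \sim Y$. The reverse implication is the easy direction—given a weak isomorphism through $W$ with maps $\phi_X, \phi_Y$, one constructs a coupling $\pi = (\phi_X, \phi_Y)_\ast \mu_W$ whose distortion vanishes because $\phi_X^\ast \omega_X = \phi_Y^\ast \omega_Y$ almost everywhere. For the forward direction, I would take an optimal coupling $\pi$ with $\mathrm{dis}_p^Z(\pi) = 0$ (again using \Cref{thm: optimal coupling}), which forces $\omega_X(x,x') = \omega_Y(y,y')$ for $\pi \otimes \pi$-almost every $((x,y),(x',y'))$. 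The task is to promote this common-coupling data into an explicit weak isomorphism, i.e.\ to build the space $W$ and the measure-preserving maps. The natural candidate is $W = X \times Y$ with $\mu_W = \pi$ and coordinate projections $\phi_X, \phi_Y$; one then defines $\omega_W := \phi_X^\ast \omega_X$ and checks that $\phi_X, \phi_Y$ are measure-preserving (immediate from the marginal conditions) and that $\phi_X^\ast \omega_X = \phi_Y^\ast \omega_Y = \omega_W$ holds $\mu_W \otimes \mu_W$-a.e.\ (immediate from the vanishing distortion). I expect the subtle point to be verifying that $\omega_W$ is genuinely $L^p$ and that the almost-everywhere statements transfer correctly through the product construction; separability of $Z$ is essential throughout to guarantee measurability of $d_Z \circ (\omega_X \times \omega_Y)$, per the discussion in \Cref{sec:metric_space_valued_Lp}. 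The elegance of this $W = X \times Y$ construction is that weak isomorphism is tailored precisely to make this direction work, so the bulk of the effort is bookkeeping rather than any deep new idea.
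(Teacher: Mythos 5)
Your proposal is correct and follows essentially the same route as the paper's proof: symmetry via the swap map, the triangle inequality via the gluing lemma applied to optimal couplings (whose existence comes from \Cref{thm: optimal coupling}) followed by the pointwise triangle inequality for $d_Z$ and Minkowski's inequality, and the distance-zero characterization via the coupling $(\phi_X,\phi_Y)_\ast\mu_W$ in one direction and the $W = X\times Y$, $\mu_W = \pi$, $\omega_W = \phi_X^\ast\omega_X$ construction with coordinate projections in the other. No gaps; this matches the paper's argument in all essentials.
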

 
For the Fused GW~\citep{vayer2020fused} and Fused Network GW~\citep{yang2023exploiting} distances, it was previously only shown that they satisfy ``relaxed" triangle inequalities, including an extra scaling term depending on the $q$ parameter---that is, an inequality of the form
\[
d_q(x,z) \leq 2^{q-1}(d_q(x,y) + d_q(y,z)),
\]
where we momentarily use $d_q$ as a placeholder for a generic metric depending on a parameter $q \in [1,\infty)$ (cf.~\citealp[Theorem 1]{vayer2020fused}).
With our slight reformulation of these distances (see \Cref{rem:qth_root} and \Cref{rem:qth_root_FGW}), we have the following corollary of \Cref{thm:ZGW_is_a_metric}. The result follows immediately by transporting the definition of weak isomorphism into the appropriate context and via \Cref{prop:FNGW_equals_ZGW} and \Cref{thm:ZGW_is_a_metric}. In particular, this result strengthens the previous results on relaxed triangle inequalities, promoting them to true triangle inequalities.
\begin{corollary}\label{cor:fusedGW_triangle}
    The Fused GW and Fused Network GW distances are metrics, up to weak isomorphism.
\end{corollary}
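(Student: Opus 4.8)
The plan is to obtain the corollary as a direct consequence of two results already in hand: the realization of the Fused Network GW distance as a $Z$-GW distance (\Cref{prop:FNGW_equals_ZGW}) and the fact that every $Z$-GW distance is a metric up to weak isomorphism (\Cref{thm:ZGW_is_a_metric}). Since the Fused GW distance is precisely the special case $\alpha = 0$ of the Fused Network GW distance, it suffices to treat FNGW, after which FGW follows by specialization. The entire argument is a transport of structure along the distance-preserving correspondence supplied by \Cref{prop:FNGW_equals_ZGW}.

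Concretely, I would fix hyperparameters $\mathcal{H} = (p,\Omega,d_\Omega,\Psi,d_\Psi)$ together with $q \in [1,\infty)$ and $\alpha,\beta \in [0,1]$ with $\alpha + \beta \leq 1$, and recall from \Cref{prop:FNGW_equals_ZGW} the target space $Z = \Psi \times \Omega \times \R$ with the weighted $\ell^q$ metric $d_Z$, together with the assignment $X \mapsto \overline{X}$ sending an $\mathcal{H}$-network to the $Z$-network with kernel $\overline{\omega}_X(x,x') = (\psi_X(x),\omega_X(x,x'),\phi_X(x,x'))$. That proposition gives the key identity $\mathrm{FNGW}^{\mathcal{H}}_{q,\alpha,\beta}(X,Y) = \dgwz(\overline{X},\overline{Y})$ for all $\mathcal{H}$-networks $X,Y$. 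The sole hypothesis of \Cref{thm:ZGW_is_a_metric} that must be verified is that $(Z,d_Z)$ is a separable metric space. Separability is immediate, since the product of countable dense subsets of $\Psi$, $\Omega$, and $\R$ is dense for the weighted $\ell^q$ metric, which induces the product topology. The triangle inequality for $d_Z$ follows from the triangle inequalities of $d_\Psi$, $d_\Omega$, and $|\cdot|$ combined with Minkowski's inequality for the weighted $\ell^q$ norm on $\R^3$; it is exactly here that the reformulation carrying the outer $q$-th root (\Cref{rem:qth_root} and \Cref{rem:qth_root_FGW}) is essential, as this root is what makes $d_Z$ a genuine metric rather than a quasimetric obeying only a relaxed triangle inequality.

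The one point requiring care is positive-definiteness: if any of the weights $1-\alpha-\beta$, $\alpha$, $\beta$ vanishes, the corresponding coordinate is invisible to $d_Z$, making it merely a pseudometric. I would handle this by deleting the zero-weight factors from the product defining $Z$, retaining only the coordinates with strictly positive weight; the resulting space is a genuine separable metric space and the distance identity of \Cref{prop:FNGW_equals_ZGW} is unaffected. For the Fused GW case $\alpha = 0$ this amounts to taking $Z = \Psi \times \R$ with weights $1-\beta$ and $\beta$, collapsing further to $Z = \Psi$ or $Z = \R$ in the degenerate cases $\beta = 0$ or $\beta = 1$.

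With $(Z,d_Z)$ confirmed to be a separable metric space, \Cref{thm:ZGW_is_a_metric} gives that $\dgwz$ is a metric on $\MZp$. Transporting this back through the distance-preserving assignment $X \mapsto \overline{X}$, the symmetry, non-negativity, and triangle inequality of $\mathrm{FNGW}^{\mathcal{H}}_{q,\alpha,\beta}$ hold verbatim, since the corresponding statements for the associated $Z$-networks transfer immediately. To obtain the identity of indiscernibles, I would declare two $\mathcal{H}$-networks weakly isomorphic precisely when their associated $Z$-networks are weakly isomorphic in the sense of \Cref{def:weak_isomorphism}; unwinding this condition through $\overline{\omega}_X$ recovers the expected notion for attributed networks (a common $\mathcal{H}$-network $W$ with measure-preserving maps under which the node features, edge features, and graph kernels all pull back to agree almost everywhere, noting that the node-feature component depends only on the first argument). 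Then $\mathrm{FNGW}^{\mathcal{H}}_{q,\alpha,\beta}(X,Y) = 0$ holds if and only if $\overline{X} \sim \overline{Y}$, which by the transported definition is exactly weak isomorphism of $X$ and $Y$. I expect no genuine obstacle here, as all substantive work resides in \Cref{prop:FNGW_equals_ZGW} and \Cref{thm:ZGW_is_a_metric}; the only points demanding attention are the separability and positive-definiteness of $Z$ and the faithful transport of the weak-isomorphism equivalence.
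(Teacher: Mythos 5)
Your proposal is correct and takes essentially the same route as the paper, whose proof is exactly the one-line transport you describe: realize FNGW as a $Z$-GW distance via \Cref{prop:FNGW_equals_ZGW}, invoke \Cref{thm:ZGW_is_a_metric}, and declare $\mathcal{H}$-networks weakly isomorphic when their associated $Z$-networks are. Your extra care in the degenerate cases (deleting zero-weight factors of $\Psi \times \Omega \times \R$ so that $d_Z$ is a genuine metric rather than a pseudometric, and checking separability of the product) is a correct refinement of a point the paper leaves implicit.
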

The proof of \Cref{thm:ZGW_is_a_metric} follows the standard strategy (cf.~\citealt{memoli2011}), for which we will need the following well-known lemma:
\begin{lemma}[Gluing Lemma, \citealt{villani2003topics}]
    Let $\mu_1,\mu_2,\mu_3$ be three probability measures, supported on Polish spaces $X_1,X_2,X_3$ respectively, and let $\pi_{12}\in \mathcal{C}(\mu_1,\mu_2), \pi_{23}\in \mathcal{C}(\mu_2,\mu_3)$ be two couplings. Then there exists a probability measure $\pi$ on $X_1\times X_2\times X_3$, with marginals $\pi_{12}$ on $X_1\times X_2$ and $\pi_{23}$ on $X_2\times X_3$.
\end{lemma}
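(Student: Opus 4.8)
The plan is to construct $\pi$ explicitly by \emph{disintegrating} both couplings along their common $X_2$-marginal $\mu_2$ and then gluing them together conditionally on the $X_2$-coordinate. Since $X_1,X_2,X_3$ are Polish, the disintegration theorem applies: the coupling $\pi_{12}$, whose second marginal is $\mu_2$, admits a $\mu_2$-almost-everywhere uniquely determined measurable family $x_2 \mapsto \pi_{12}^{x_2}$ of Borel probability measures on $X_1$ satisfying
\[
\int_{X_1 \times X_2} h \, d\pi_{12} = \int_{X_2} \left( \int_{X_1} h(x_1,x_2) \, \pi_{12}^{x_2}(dx_1) \right) \mu_2(dx_2)
\]
for every bounded measurable $h$. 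Symmetrically, disintegrating $\pi_{23}$ along its first marginal $\mu_2$ yields a measurable family $x_2 \mapsto \pi_{23}^{x_2}$ of Borel probability measures on $X_3$ with the analogous property.

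Next I would define $\pi$ on $X_1 \times X_2 \times X_3$ as the measure that, conditionally on $x_2$, makes the $X_1$- and $X_3$-coordinates independent with the respective conditional laws; concretely, for bounded measurable $F$,
\[
\int F \, d\pi := \int_{X_2} \left( \int_{X_1} \int_{X_3} F(x_1,x_2,x_3) \, \pi_{23}^{x_2}(dx_3) \, \pi_{12}^{x_2}(dx_1) \right) \mu_2(dx_2).
\]
Measurability of the inner integrals in $x_2$ follows from the measurability of the two disintegration kernels, so the outer integral is well-defined; taking $F \equiv 1$ and using that each $\pi_{12}^{x_2}$ and $\pi_{23}^{x_2}$ is a probability measure shows $\pi(X_1 \times X_2 \times X_3) = \mu_2(X_2) = 1$, so $\pi$ is indeed a probability measure.

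Finally I would verify the two marginal conditions. For a test function of the form $F(x_1,x_2,x_3) = h(x_1,x_2)$, independent of $x_3$, the inner $X_3$-integral contributes the factor $\pi_{23}^{x_2}(X_3) = 1$, so the defining formula collapses exactly to the disintegration formula for $\pi_{12}$, giving $\int h \, d\pi = \int h \, d\pi_{12}$; hence the $(X_1,X_2)$-marginal of $\pi$ equals $\pi_{12}$. The symmetric computation with $F(x_1,x_2,x_3) = k(x_2,x_3)$ yields the $(X_2,X_3)$-marginal equal to $\pi_{23}$. The only substantive step is the appeal to the disintegration theorem, whose hypotheses require the spaces to be Polish (equivalently, standard Borel) precisely to guarantee both existence and joint measurability of the conditional kernels; everything else is a routine application of Tonelli's theorem together with the fact that the fiberwise measures are probabilities.
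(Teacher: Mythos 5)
Your proposal is correct, and it is essentially the paper's own approach: the paper states the Gluing Lemma without proof, citing \cite{villani2003topics}, and the argument given there is exactly your construction---disintegrate $\pi_{12}$ and $\pi_{23}$ along the common marginal $\mu_2$ and glue the conditional kernels, with the $X_1$- and $X_3$-coordinates conditionally independent given $x_2$. Your verification of the marginal conditions and your remark that the Polish hypothesis is what licenses the disintegration theorem are both accurate, so nothing is missing.
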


\begin{proof}of~\Cref{thm:ZGW_is_a_metric}.
    First, it is clear that $\dgwz(X,Y)\geq 0$ for any $Z$-networks $X,Y$. For symmetry, the symmetry of $d$ proves that the mapping $f:X\times Y \to Y\times X, f(x,y)=(y,x)$ defines a bijection $\mathcal{C}(\mu_X,\mu_Y)\ni \pi \mapsto f_* \pi \in \mathcal{C}(\mu_Y,\mu_X)$ such that $\disp^Z(\pi) = \disp^Z(f_*\pi)$, and taking the infimum shows that $\dgwz(X,Y)=\dgwz(Y,X)$.  
    
    We will now prove the triangle inequality. We consider three $Z$-networks $(X_1,\omega_{1},\mu_1)$, $(X_2,\omega_{2}, \mu_2)$ and $(X_3, \omega_{3}, \mu_3)$ and take couplings $\pi_{12}\in \mathcal{C}(\mu_1,\mu_2)$ and $\pi_{23}\in \mathcal{C}(\mu_2,\mu_3)$ realizing $\dgwz(X_1,X_2)$ and $\dgwz(X_2,X_3)$, respectively (\Cref{thm: optimal coupling}). By the Gluing Lemma, there exists a probability measure $\pi$ on $X_1\times X_2 \times X_3$ with marginals $\pi_{12}$ and $\pi_{23}$. Denote the marginal of $\pi$ on $X_1\times X_3$ by $\pi_{13}$. Then, by definition, we have $\pi_{13}\in \mathcal{C}(\mu_1,\mu_3)$. Therefore, by Minkowski's inequality, we have
    \begin{align}
        2 \cdot \dgwz (X_1, X_3) & \leq \|d_Z\circ (\omega_1\times \omega_3)\|_{L^p(\pi_{13}\otimes \pi_{13})} \label{eqn:triangle_inequality_proof_1} \\ &= \|d_Z\circ (\omega_1\times \omega_3)\|_{L^p(\pi \otimes \pi)} \label{eqn:triangle_inequality_proof_2} \\ &\leq \|d_Z\circ (\omega_{1} \times \omega_{2}) + d_Z\circ (\omega_{2} \times \omega_{3})\|_{L^p(\pi\otimes \pi)} \label{eqn:triangle_inequality_proof_3} \\ &\leq \|d_Z\circ (\omega_{1} \times \omega_{2})\|_{L^p(\pi\otimes \pi)} + \|d_Z\circ (\omega_{2} \times \omega_{3})\|_{L^p(\pi\otimes \pi)} \label{eqn:triangle_inequality_proof_4} \\ &=\|d_Z\circ (\omega_{1} \times \omega_{2})\|_{L^p(\pi_{12}\otimes \pi_{12})} + \|d_Z\circ (\omega_{2} \times \omega_{3})\|_{L^p(\pi_{23}\otimes \pi_{23})} \label{eqn:triangle_inequality_proof_5} \\ &= \disp^Z(\pi_{12}) + \disp^Z(\pi_{23})\\
        &= 2 \cdot \big(\dgwz(X_1,X_2) + \dgwz(X_2,X_3)\big) \label{eqn:triangle_inequality_proof_6},
    \end{align}
    where we have used suboptimality of $\pi_{13}$ in \eqref{eqn:triangle_inequality_proof_1}, marginal conditions on $\pi$ in \eqref{eqn:triangle_inequality_proof_2} and \eqref{eqn:triangle_inequality_proof_5},  the triangle inequality for $d_Z$ in \eqref{eqn:triangle_inequality_proof_3}, Minkowski's inequality in \eqref{eqn:triangle_inequality_proof_4}, and the optimality of $\pi_{12}$ and $\pi_{23}$ in \eqref{eqn:triangle_inequality_proof_6}.
    
    We finally prove that, for $Z$-networks $X$ and $Y$, $\dgwz(X,Y)=0$ if and only if $X$ and $Y$ are weakly isomorphic. We will restrict our attention to the case $1\leq p <\infty$ since we can prove the $p=\infty$ case by simply replacing the integrals with essential suprema. Suppose that $X$ and $Y$ are weakly isomorphic. Then, there exists a $Z$-network $(W,\omega_W, \mu_W)$ and measure-preserving maps $\phi_X:W\to X$ and $\phi_Y:W\to Y$ such that $\phi_X^*\omega_X = \phi_Y^* \omega_Y = \omega_W$ $\mu_W\otimes \mu_W$-almost everywhere. By definition of $\phi_X$ and $\phi_Y$,  $\pi_{XY}=(\phi_X, \phi_Y)_*\mu_W$ belongs to $\mathcal{C}(\mu_X,\mu_Y)$, and
    \begin{align}
        \disp^Z(\pi)^p & = \int_{X\times Y}\int_{X\times Y}d_Z(\omega_X(x,x'),\omega_Y(y,y'))^p\pi(dx\times dy)\pi(dx'\times dy') \\ &= \int_{W}\int_{W}d_Z(\omega_X(\phi_X(dw), \phi_X(dw')), \omega_Y(\phi_Y(dw),\phi_Y(dw')) )^p\mu_W(dw)\mu_W(dw') \\ &=\int_{W}\int_{W}d_Z(\phi_X^*\omega_X(w, w'), \phi_Y^*\omega_Y(w, w') )^p\mu_W(dw)\mu_W(dw') \\ &=\int_{W}\int_{W}d_Z(\omega_W(w,w'), \omega_W(w, w') )^p\mu_W(dw)\mu_W(dw')=0.
    \end{align}
    Therefore, $\dgwz (X,Y) = 0$. To prove the other direction, suppose that $\dgwz(X,Y)=0$. By \Cref{thm: optimal coupling}, there exists $\pi \in \mathcal{C}(X\times Y)$ such that $\disp^Z(\pi)=0$. Now, define measure-preserving maps $\phi_X:X\times Y\to X, \phi_Y: X\times Y \to Y$ as the projection onto $X$ and $Y$, respectively, and define a $Z$-network by $(W,\omega_W,\mu_W)=(X\times Y, \phi_X^* \omega_X, \pi)$. Now, we have
    \begin{align}
        0=\disp^Z(\pi)^p & = \int_{X\times Y}\int_{X\times Y}d_Z(\omega_X(x,x'),\omega_Y(y,y'))^p\pi(dx\times dy)\pi(dx'\times dy') \\ &= \int_{W}\int_{W}d_Z(\phi_X^*\omega_X(w,w'),\phi_Y^*\omega_Y(w,w'))^p\pi(dw)\pi(dw')
    \end{align}
    Since $d_Z\geq 0$, we have $d_Z(\phi_X^*\omega_X(w,w'),\phi_Y^*\omega_Y(w,w'))=0$ $\pi\otimes \pi$-almost everywhere, or equivalently, $\phi_W = \phi_X^*\omega_X=\phi_Y^*\omega_Y$ $\pi\otimes \pi$-almost everywhere. Therefore, $X$ and $Y$ are weakly isomorphic.
\end{proof}

\subsubsection{\texorpdfstring{$Z$-Gromov-Wasserstein Space as a Quotient of the $L^p$ Space}{Z-Gromov-Wasserstein Space as a Quotient of the Lp Space}} 

One of the classical results in metric measure geometry is that every metric measure space $(X,d_X,\mu_X)$ admits a \define{parameter}, i.e., a surjective Borel measurable map $\rho:[0,1]\to X$ such that $\rho_*\mathscr{L}=\mu_X$; see works by  \citet[Section 3$\frac{1}{2}$.3]{Gromov2006Metric} or \citet[Lemma 4.2]{Shioya2016Metric}.

This result was applied by \citet{chowdhury2019gromov} and \citet{sturm2023space} to obtain convenient representations of ($\R$-valued) measure networks.  Here, we consider a generalization of this construction to the setting of $Z$-networks. The proof of the following proposition is obvious from its formulation.

\begin{proposition}[Parametrization]
    \label{rem: interval representation} Any $(Z,p)$-network $X=(X,\omega_X, \mu_X)$ is weakly isomorphic to a $(Z,p)$-network $([0,1], \rho^*\omega_X, \mathscr{L})$ where $\mathscr{L}$ is the Lebesgue measure on $[0,1]$, $\rho:[0,1]\to X$ is a parameter of $X$, and $\rho^*\omega_X:[0,1]^2\to Z$ is defined by $\rho^*\omega_X(s,t)=\omega_X(\rho(s),\rho(t))$.
\end{proposition}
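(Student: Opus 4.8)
The plan is to verify directly that the map $\rho$ exhibits the desired weak isomorphism by taking $W = [0,1]$ in \Cref{def:weak_isomorphism}. Recall that a \emph{parameter} of a metric measure space is a surjective Borel measurable map $\rho:[0,1]\to X$ satisfying $\rho_\ast \mathscr{L} = \mu_X$; the cited results (\cite[Section 3$\frac{1}{2}$.3]{Gromov2006Metric}, \cite[Lemma 4.2]{Shioya2016Metric}) guarantee such a $\rho$ exists whenever $X$ is Polish and $\mu_X$ is a Borel probability measure, which is exactly our standing assumption on $(Z,p)$-networks. So the first step is simply to invoke the existence of $\rho$.

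Next I would set $W = ([0,1], \rho^\ast \omega_X, \mathscr{L})$ and check that this is a genuine $(Z,p)$-network, i.e.\ that $\rho^\ast \omega_X \in L^p([0,1]^2, \mathscr{L}\otimes\mathscr{L}; Z)$. This follows from the change-of-variables (pushforward) formula: since $(\rho\times\rho)_\ast(\mathscr{L}\otimes\mathscr{L}) = \mu_X \otimes \mu_X$, we have
\[
\int_{[0,1]^2} d_Z\big(\rho^\ast\omega_X(s,t), z_0\big)^p \, \mathscr{L}(ds)\,\mathscr{L}(dt) = \int_{X\times X} d_Z\big(\omega_X(x,x'), z_0\big)^p \, \mu_X(dx)\,\mu_X(dx'),
\]
which is finite by the $L^p$ assumption on $\omega_X$. (For $p=\infty$ one replaces the integrals by essential suprema and uses that $\rho_\ast\mathscr{L}=\mu_X$ preserves null sets in the relevant direction.)

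Finally I would produce the weak isomorphism explicitly. Take the mediating network to be $W$ itself, with the two structure maps $\phi_X := \rho : [0,1] \to X$ and $\phi_Y := \mathrm{id}_{[0,1]}$ in the trivial case, but more to the point, to compare $X$ with the parametrized network $([0,1], \rho^\ast\omega_X, \mathscr{L})$ one uses $W = [0,1]$ with $\phi_1 = \rho$ into $X$ and $\phi_2 = \mathrm{id}$ into $[0,1]$. The map $\rho$ is measure-preserving by definition of a parameter, and the identity is trivially measure-preserving. The pullback condition is immediate: on the one hand $\phi_2^\ast\big(\rho^\ast\omega_X\big)(s,t) = \rho^\ast\omega_X(s,t)$, and on the other hand $\phi_1^\ast \omega_X(s,t) = \omega_X(\rho(s),\rho(t)) = \rho^\ast\omega_X(s,t)$; these agree everywhere, hence certainly $\mathscr{L}\otimes\mathscr{L}$-almost everywhere. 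Both pullbacks equal the kernel $\rho^\ast\omega_X$ of $W$, so all conditions of \Cref{def:weak_isomorphism} hold and $X \sim ([0,1],\rho^\ast\omega_X,\mathscr{L})$.

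There is essentially no obstacle here beyond correct bookkeeping, which is why the proposition is stated as ``obvious from its formulation'': the only genuine input is the existence of a parameter $\rho$, which is quoted, and the only thing to verify is that pulling back the kernel along $\rho$ produces an $L^p$ map and satisfies the definition of weak isomorphism tautologically. The mildest care is needed in the $p=\infty$ case and in confirming measurability of $\rho^\ast\omega_X$, which holds because $\rho\times\rho$ is Borel measurable and $\omega_X$ is measurable, so their composition is measurable.
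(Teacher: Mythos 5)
Your proposal is correct and is exactly the verification the paper leaves implicit when it says the proof is ``obvious from its formulation'': you take $W=[0,1]$ with $\phi_1=\rho$ and $\phi_2=\mathrm{id}$, note that $\rho$ is measure-preserving by definition of a parameter, and observe that both pullbacks equal $\rho^\ast\omega_X$, with the $L^p$ membership following from $(\rho\times\rho)_\ast(\mathscr{L}\otimes\mathscr{L})=\mu_X\otimes\mu_X$. No issues.
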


An interesting observation is that, given a $Z$-network $([0,1],\omega,\mathscr{L})$ already defined over $[0,1]$, by considering parameters $\rho:[0,1]\to[0,1]$, we can construct a family of $Z$-networks $([0,1],\rho^*\omega,\mathscr{L})$, all of which are weakly isomorphic to the original $Z$-network. Following this idea, \citet{sturm2023space} established an instructive representation of the standard GW distance; here, we will generalize their construction to the $Z$-GW distance.

\begin{definition}[Invariant Transforms, \citealt{sturm2023space}]
    We say that a Borel measurable map $\phi:[0,1]\to [0,1]$ is \define{$\mathscr{L}$-invariant} if $\phi_* \mathscr{L}=\mathscr{L}$. The $\mathscr{L}$-invariant maps form a semigroup via function composition, which we denote by $\textnormal{Inv}([0,1],\mathscr{L})$. 
\end{definition}

For $p\in [1,\infty]$, $\textnormal{Inv}([0,1],\mathscr{L})$ acts on the space of kernels $L^p([0,1]^2, \mathscr{L}^2;Z)$ via the pullback $\omega \mapsto \phi^* \omega$, and the action defines the following equivalence relation on the $L^p$ space , which closely relates to weak isomorphism:
\begin{equation}
    \omega \simeq \omega' \Leftrightarrow \exists \phi, \psi\in\textnormal{Inv}([0,1],\mathscr{L}) \textrm{ s.t. }\phi^*\omega=  \psi^*\omega'.
\end{equation}
Denote the equivalence class of $\omega \in L^p([0,1]^2, \mathscr{L}^2;Z)$ by $[\omega]$. The quotient space 
\[
\mathbb{L}^p \coloneqq L^p([0,1]^2, \mathscr{L}^2;Z)/\textrm{Inv}([0,1], \mathscr{L})
\]
admits a metric
\begin{equation}
    \mathbb{D}_p([\omega], [\omega']) \coloneqq \frac{1}{2}\inf\left\{D_p(\phi^* \omega, \psi^* \omega')|\phi,\psi \in \textrm{Inv}([0,1], \mathscr{L})\right\},
\end{equation}
where $D_p$ is the $L^p$ distance (see \Cref{def:metric_lp_spaces}) and this family of metric spaces provides a representation of the $Z$-GW space:
\begin{theorem}
    \label{thm:quotient_rep_Lp}
    For $p\in [1,\infty]$, $(\mathbb{L}^p, \mathbb{D}_p)$ is isometric to $(\MZp, \dgwz)$ by 
    \begin{equation}
    \Theta_p: \mathbb{L}^p\ni [\omega] \mapsto [([0,1],\omega, \mathscr{L})] \in \MZp
    \end{equation}
    where $[([0,1],\omega, \mathscr{L})]$ denotes the equivalence class of a $(Z,p)$-network $([0,1],\omega, \mathscr{L})$ by weak isomorphism.
\end{theorem}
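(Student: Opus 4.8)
The plan is to show that the map $\Theta_p$ is well-defined, surjective, and distance-preserving; since a distance-preserving map between metric spaces is automatically injective (using that $\mathbb{D}_p$ is a metric and that $\dgwz$ is a metric on $\MZp$ by \Cref{thm:ZGW_is_a_metric}), this suffices to conclude that $\Theta_p$ is an isometry. Both directions of the key identity reduce to a single change-of-variables computation, so the argument is quite symmetric.

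First I would check well-definedness. If $[\omega]=[\omega']$ in $\mathbb{L}^p$, witnessed by $\phi,\psi\in\textnormal{Inv}([0,1],\mathscr{L})$ with $\phi^*\omega=\psi^*\omega'$, then the $(Z,p)$-network $W=([0,1],\phi^*\omega,\mathscr{L})$, together with the measure-preserving maps $\phi,\psi:[0,1]\to[0,1]$, exhibits a weak isomorphism between $([0,1],\omega,\mathscr{L})$ and $([0,1],\omega',\mathscr{L})$; hence $\Theta_p[\omega]=\Theta_p[\omega']$. Surjectivity is immediate from the Parametrization proposition (\Cref{rem: interval representation}): every $(Z,p)$-network is weakly isomorphic to one of the form $([0,1],\rho^*\omega_X,\mathscr{L})$, which is exactly $\Theta_p([\rho^*\omega_X])$.

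The heart of the argument is the distance-preservation identity
\[
\mathbb{D}_p([\omega],[\omega']) \;=\; \dgwz\big(([0,1],\omega,\mathscr{L}),\,([0,1],\omega',\mathscr{L})\big),
\]
which I would establish by two matching inequalities. For $\dgwz\le\mathbb{D}_p$: given $\phi,\psi\in\textnormal{Inv}([0,1],\mathscr{L})$, the pushforward $\pi:=(\phi,\psi)_*\mathscr{L}$ lies in $\mathcal{C}(\mathscr{L},\mathscr{L})$, since its marginals are $\phi_*\mathscr{L}=\mathscr{L}$ and $\psi_*\mathscr{L}=\mathscr{L}$. The map $(s,t)\mapsto\big((\phi(s),\psi(s)),(\phi(t),\psi(t))\big)$ pushes $\mathscr{L}^2$ forward to $\pi\otimes\pi$, so the change-of-variables formula gives the exact equality $\disp^Z(\pi)=D_p(\phi^*\omega,\psi^*\omega')$; taking infima over $\pi$ and over $(\phi,\psi)$ yields the inequality. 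Conversely, for $\mathbb{D}_p\le\dgwz$, I would start from an arbitrary $\pi\in\mathcal{C}(\mathscr{L},\mathscr{L})$, viewed as a Borel probability measure on $[0,1]^2$, and choose a Borel map $\theta=(\theta_1,\theta_2):[0,1]\to[0,1]^2$ with $\theta_*\mathscr{L}=\pi$. Because both coordinate projections of $\pi$ equal $\mathscr{L}$, the components satisfy $(\theta_i)_*\mathscr{L}=\mathscr{L}$, i.e.\ $\theta_1,\theta_2\in\textnormal{Inv}([0,1],\mathscr{L})$; and since $(s,t)\mapsto(\theta(s),\theta(t))$ again pushes $\mathscr{L}^2$ forward to $\pi\otimes\pi$, the identical computation gives $D_p(\theta_1^*\omega,\theta_2^*\omega')=\disp^Z(\pi)$. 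Taking infima yields the reverse inequality, hence equality.

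The main obstacle is supplying the map $\theta$ in the second inequality: this requires the standard measure-theoretic fact that any Borel probability measure on a Polish space (here $\pi$ on $[0,1]^2$) is the pushforward of Lebesgue measure under some Borel map — the same existence-of-parameters result underlying \Cref{rem: interval representation}, but now applied to the coupling $\pi$ rather than to a network's own measure. A technical point to verify throughout is that each pullback is a genuine operation on $L^p$-classes: for $\phi\in\textnormal{Inv}([0,1],\mathscr{L})$ the product map $\phi\times\phi$ is measure-preserving on $([0,1]^2,\mathscr{L}^2)$, so $\phi^*\omega$ is well-defined $\mathscr{L}^2$-a.e.\ and $D_p$-preserving, and likewise the two product maps appearing above are measure-preserving onto $(([0,1]^2)^2,\pi\otimes\pi)$, which is precisely what legitimizes the change-of-variables steps. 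Finally, injectivity of $\Theta_p$ follows from distance-preservation, completing the proof that $\Theta_p$ is an isometry.
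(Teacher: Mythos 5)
Your argument is correct and is essentially the proof the paper delegates to Sturm's Theorem 5.10: distance preservation via the two matching inequalities, pushing $\mathscr{L}$ forward under $(\phi,\psi)$ to get a coupling in one direction, and parametrizing an arbitrary coupling $\pi\in\mathcal{C}(\mathscr{L},\mathscr{L})$ by a Borel map $\theta:[0,1]\to[0,1]^2$ with $\theta_*\mathscr{L}=\pi$ in the other. One small caveat: your injectivity step borrows the paper's unproven assertion that $\mathbb{D}_p$ separates points on the quotient; if you want this self-contained, derive injectivity directly by taking a parameter $\rho:[0,1]\to W$ of the space $W$ witnessing a weak isomorphism and observing that $\phi_X\circ\rho,\ \phi_Y\circ\rho\in\textnormal{Inv}([0,1],\mathscr{L})$ exhibit $\omega\simeq\omega'$.
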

\begin{proof}
    As the proof by \citet[Theorem 5.10]{sturm2023space} does not rely on the properties of the target metric space, trivial modifications allow us to apply the proof to our case.
\end{proof}

\subsection{Metric and Topological Properties}

Having shown that the $Z$-GW distance is a metric, we now establish some of the properties of its induced topology. In particular, under mild assumptions, we will prove that:
\begin{itemize}
    \item It is \emph{separable}, i.e., it contains a countable, dense subset. This is crucial for approximation results, in that we can frequently pass to a relatively simple subspace.
    \item It is \emph{complete}, i.e., its Cauchy sequences always converge. Completeness and separability together make the space of $Z$-networks a \emph{Polish space}; this is generally considered to be the minimum requirement for the theory of measures over a space to be well-behaved (see \Cref{sec:basic_terminology}).
    \item It is \emph{path-connected} and, in fact, \emph{contractible}. The existence of paths joining points in the space is required for the application of statistical methods. For example, it allows interpolations between points, which is a fundamental ingredient for the computation of geometric statistics such as Fr\'{e}chet means. Contractibility (that the space of $Z$-networks is homotopy equivalent to a point) tells us that the space is quite simple, from a topological perspective.
\end{itemize}
We also show that the $Z$-Gromov-Wasserstein space is \emph{geodesic}, provided that $Z$ is. Recall that a metric space $(X,d_X)$ is \define{geodesic} if, for any pair of points $x,x' \in X$, there exists a \define{geodesic path} joining them; this is a path $\gamma:[0,1] \to X$ such that, for all $s,t \in [0,1]$,
\[
d_X(\gamma(s),\gamma(t)) = |t-s| d_X(x,x')
\]
(see, e.g., \citet{burago2022course} for details). In fact, to check that a given curve $\gamma$ is a geodesic path, it suffices to show that $d_X(\gamma(s),\gamma(t)) \leq |t-s| d_X(x,x')$ holds for all $s,t \in [0,1]$ \citep[Lemma 1.3]{chowdhury2018explicit}. In a geodesic space, points are not only always connected by paths, but are connected by paths which behave like `straight lines' in the space.

\subsubsection{Separability}
We first show that the $Z$-GW distance inherits separability from its target space $Z$. We will use the following lemma. 

\begin{lemma}
    \label{lem: lp comparison} Let $X_0=(X,\omega_0, \mu), X_1=(X,\omega_1,\mu)$ be $(Z,p)$-networks with the same underlying Polish space and measure but different kernels. We have 
    \begin{equation}
        \dgwz(X_0, X_1) \leq \frac{1}{2} D_p(\omega_0, \omega_1)
    \end{equation}
    Here, $D_p$ denotes the $L^p$ distance of \Cref{def:metric_lp_spaces}.
\end{lemma}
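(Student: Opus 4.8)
The plan is to exhibit a single, canonical coupling whose $p$-distortion equals exactly $D_p(\omega_0,\omega_1)$; since $\dgwz$ is defined as one-half the infimum of $\disp^Z$ over all couplings, the bound is then immediate. Because $X_0$ and $X_1$ share the same underlying Polish space $X$ and the same measure $\mu$, there is an obvious candidate: the \emph{diagonal coupling} $\pi := \Delta_\ast \mu$, where $\Delta:X \to X \times X$ is the map $x \mapsto (x,x)$.

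First I would check that $\pi \in \mathcal{C}(\mu,\mu)$. For a measurable set $A \subseteq X$ one has $\Delta^{-1}(A \times X) = A$, so the first marginal of $\pi$ satisfies $\pi(A \times X) = \mu(\Delta^{-1}(A \times X)) = \mu(A)$, and symmetrically the second marginal is $\mu$; hence $\pi$ is a valid coupling of $\mu$ with itself.

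Next I would compute $\disp^Z(\pi)$. The product measure $\pi \otimes \pi$ is concentrated on the set of quadruples $(x,y,x',y')$ with $y = x$ and $y' = x'$, so integrating the $Z$-GW integrand against $\pi \otimes \pi$ collapses the fourfold integral to a twofold integral against $\mu \otimes \mu$, and the integrand $d_Z(\omega_0(x,x'),\omega_1(y,y'))$ reduces to $d_Z(\omega_0(x,x'),\omega_1(x,x'))$. Concretely, for $1 \le p < \infty$,
\[
\disp^Z(\pi)^p = \int_X \int_X d_Z\big(\omega_0(x,x'),\omega_1(x,x')\big)^p \, \mu(dx)\,\mu(dx') = D_p(\omega_0,\omega_1)^p .
\]
Taking $p$-th roots yields $\disp^Z(\pi) = D_p(\omega_0,\omega_1)$, and suboptimality of $\pi$ gives $\dgwz(X_0,X_1) \le \tfrac{1}{2}\disp^Z(\pi) = \tfrac{1}{2} D_p(\omega_0,\omega_1)$, as claimed.

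I do not expect any genuine obstacle: the entire content is the observation that the diagonal coupling identifies the two factors, so that the $Z$-GW cost functional restricts to the $L^p$ cost functional. The only point requiring minor care is the $p = \infty$ case, which I would handle identically by replacing integrals with essential suprema, using that the essential supremum of $d_Z(\omega_0,\omega_1)$ over $\mathrm{supp}(\pi\otimes\pi)$ equals $\mathrm{esssup}_{\mu\otimes\mu}\, d_Z(\omega_0,\omega_1) = D_\infty(\omega_0,\omega_1)$.
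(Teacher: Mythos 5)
Your proof is correct and matches the paper's argument exactly: both use the diagonal coupling $\Delta_\ast\mu$, observe that the distortion functional collapses to $D_p(\omega_0,\omega_1)$, and conclude by suboptimality, with the $p=\infty$ case handled by replacing integrals with essential suprema.
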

\begin{proof}\belowdisplayskip=-12pt
    Let $\Delta: X\to X\times X, \Delta(x)=(x,x)$ be the mapping to the diagonal. We note that $\Delta_*\mu$ defines a coupling between $\mu$ and itself, so substituting $\pi=\Delta_*\mu$ to the distortion functional, we obtain, for $p\in [1,\infty)$ (the $p = \infty$ case is similar),
    \begin{align}
        \dgwz(X_0,X_1) \leq \frac{1}{2} \disp^Z(\pi)&=\frac{1}{2}\left(\int_{X}\int_{X}d_Z(\omega_0(x,x'),\omega_1(x,x'))^p\mu(dx)\mu(dx')\right)^{1/p} \\ &= \frac{1}{2}D_p(\omega_0,\omega_1).
    \end{align}
\end{proof}

\begin{proposition}
    \label{prop: separability}
    $\MZp$ is separable.
\end{proposition}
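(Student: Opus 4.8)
The plan is to exhibit an explicit countable dense subset of $\MZp$ by combining three separability/density facts: the interval representation of $Z$-networks (\Cref{rem: interval representation}), the separability of the $L^p$-space $L^p([0,1]^2,\mathscr{L}^2;Z)$ (\Cref{prop: lp_separable} of \Cref{prop: lp complete_separable}), and the comparison inequality $\dgwz(X_0,X_1)\le \tfrac12 D_p(\omega_0,\omega_1)$ from \Cref{lem: lp comparison}. The key observation is that \Cref{lem: lp comparison} shows the map sending a kernel $\omega \in L^p([0,1]^2,\mathscr{L}^2;Z)$ to the $Z$-network $([0,1],\omega,\mathscr{L})$ is Lipschitz (in fact $\tfrac12$-Lipschitz) from $(L^p([0,1]^2,\mathscr{L}^2;Z),D_p)$ into $(\MZp,\dgwz)$, and \Cref{rem: interval representation} shows this map is surjective onto $\MZp$ (every $Z$-network is weakly isomorphic to one of this form).

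First I would fix a point $z_0 \in Z$ and let $\mathcal{D} \subset L^p([0,1]^2,\mathscr{L}^2;Z)$ be a countable $D_p$-dense subset, which exists by \Cref{prop: lp_separable}. I would then define the candidate countable set
\[
\mathcal{S} = \left\{\, [([0,1],\omega,\mathscr{L})] \;\middle|\; \omega \in \mathcal{D} \,\right\} \subset \MZp,
\]
which is countable since $\mathcal{D}$ is. Next I would show $\mathcal{S}$ is dense: given an arbitrary $(Z,p)$-network $X = (X,\omega_X,\mu_X)$ and any $\eps > 0$, \Cref{rem: interval representation} gives a parameter $\rho:[0,1]\to X$ so that $X$ is weakly isomorphic to $([0,1],\rho^\ast\omega_X,\mathscr{L})$, hence $\dgwz(X,([0,1],\rho^\ast\omega_X,\mathscr{L})) = 0$. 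Since $\rho^\ast\omega_X \in L^p([0,1]^2,\mathscr{L}^2;Z)$, density of $\mathcal{D}$ supplies $\omega \in \mathcal{D}$ with $D_p(\rho^\ast\omega_X,\omega) < 2\eps$. Applying \Cref{lem: lp comparison} to the networks $([0,1],\rho^\ast\omega_X,\mathscr{L})$ and $([0,1],\omega,\mathscr{L})$, which share the same underlying space and measure, gives
\[
\dgwz\big(([0,1],\rho^\ast\omega_X,\mathscr{L}),([0,1],\omega,\mathscr{L})\big) \le \tfrac12 D_p(\rho^\ast\omega_X,\omega) < \eps.
\]
Finally, by the triangle inequality (\Cref{thm:ZGW_is_a_metric}) and the vanishing distance from the weak isomorphism, $\dgwz(X,([0,1],\omega,\mathscr{L})) < \eps$, so $X$ lies within $\eps$ of an element of $\mathcal{S}$.

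I do not expect a serious obstacle here, since all the hard analytic work is already packaged into the cited results. The one point requiring minor care is verifying that the parametrization $\rho^\ast\omega_X$ genuinely lands in $L^p([0,1]^2,\mathscr{L}^2;Z)$ so that \Cref{prop: lp_separable} applies; this follows because $\rho$ is measure-preserving, so $\int_{[0,1]^2} d_Z(\rho^\ast\omega_X(s,t),z_0)^p\,\mathscr{L}^2 = \int_{X^2} d_Z(\omega_X(x,x'),z_0)^p\,\mu_X\otimes\mu_X < \infty$ by the $L^p$ assumption on $\omega_X$. Everything else reduces to chaining the two inequalities above with the metric axioms, so the main content is really just the recognition that $\MZp$ is a $\tfrac12$-Lipschitz image of a separable space together with the interval representation guaranteeing surjectivity.
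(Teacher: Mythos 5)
Your proposal is correct and follows essentially the same route as the paper's own proof: reduce to interval-parametrized networks via \Cref{rem: interval representation}, take a countable $D_p$-dense set of kernels from \Cref{prop: lp complete_separable}, and transfer density to $\MZp$ via \Cref{lem: lp comparison}. The only addition is your explicit check that $\rho^\ast\omega_X$ lies in $L^p([0,1]^2,\mathscr{L}^2;Z)$, which the paper leaves implicit but which is a correct and harmless elaboration.
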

\begin{proof}
     Without loss of generality (by \Cref{rem: interval representation}), we consider $Z$-networks of the form $([0,1],\omega, \mathscr{L})$ where $\mathscr{L}$ is the Lebesgue measure. Since $L^p([0,1]\times [0,1], \mathscr{L}\otimes \mathscr{L};Z)$ is separable by \Cref{prop: lp complete_separable}, we can take a countable dense subset $\{\omega_i\}_{i=1}^\infty$ of $L^p([0,1]\times [0,1], \mathscr{L}\otimes \mathscr{L};Z)$. Now, for any $Z$-network $X=([0,1], \omega, \mathscr{L})$ and any $\epsilon>0$, there exists $\omega_i$ such that $D_p(\omega, \omega_i) < \epsilon$. By constructing a $Z$-network by $X_i=([0,1], \omega_i, \mathscr{L})$, and by \Cref{lem: lp comparison}, we see that $\dgwz(X,X_i)<\epsilon$. Therefore, $\{X_i\}_{i=1}^{\infty}$ is dense in $\MZp$.
\end{proof}
Although the proof above is sufficient to prove the separability, the argument remains high-level and does not provide an explicit countable dense subset in $\MZp$. To construct such a subset, we introduce the following object.

\begin{definition}[$n$-Point $Z$-Network]
    For each $n\in \mathbb{N}$, an \define{$n$-point $Z$-network} is the equivalence class of a $Z$-network $(X,\omega, \mu)$ by weak isomorphism,  where $X=\{1,\ldots, n\}$, $\omega:X\times X\to Z$ is any function and $\mu= \frac{1}{n}\sum_{i=1}^{n}\delta_{i}$. Denote the set of $n$-point $(Z,p)$-networks by $\M^{Z}_n$.
\end{definition}

We note that $n$-point $Z$-networks are $(Z,p)$-networks for any $p\in[1,\infty]$ because the kernels are supported on finite sets. That is, we have $\M^{Z}_n \subset \MZp$ for any $p\in [1,\infty]$. \citet{sturm2023space} proved the density of $n$-point $Z$-networks in the special case where he only considers symmetric $\mathbb{R}$-valued kernels $\omega$. However, the proof can be generalized to $Z$-networks.

\begin{proposition}\label{prop:dense_subset}
    The countable set $\bigcup_{n}\M^{Z}_n$ is dense in $\MZp$.
\end{proposition}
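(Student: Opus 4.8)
The plan is to approximate an arbitrary $Z$-network by an $n$-point $Z$-network with error controlled in the $Z$-GW metric, combined with a countability argument. By \Cref{rem: interval representation}, it suffices to approximate networks of the form $([0,1],\omega,\mathscr{L})$. The key tool is \Cref{lem: lp comparison}, which bounds the $Z$-GW distance between two networks on the same base space by their $L^p$ kernel distance. So the strategy reduces to: given $\omega \in L^p([0,1]^2,\mathscr{L}^2;Z)$ and $\eps > 0$, produce a piecewise-constant kernel, constant on dyadic (or uniform) grid squares, that is $L^p$-close to $\omega$, and then recognize such a piecewise-constant kernel as (weakly isomorphic to) an $n$-point $Z$-network.

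First I would invoke \Cref{prop: lp_grid_approx} (part 3 of \Cref{prop: lp complete_separable}), which guarantees that any $f \in L^p([0,1]^d,\mathscr{L}^d;Z)$ can be approximated in $L^p$ by functions constant on a grid of fixed step size. Applying this with $d = 2$ to the kernel $\omega$, I obtain a grid function $\omega_{\mathrm{grid}}$ with $D_p(\omega,\omega_{\mathrm{grid}}) < 2\eps$, whence \Cref{lem: lp comparison} gives $\dgwz(([0,1],\omega,\mathscr{L}),([0,1],\omega_{\mathrm{grid}},\mathscr{L})) < \eps$. Next I would observe that a kernel constant on an $n \times n$ grid of equal squares is the pullback, under the parameter $\rho:[0,1]\to\{1,\dots,n\}$ sending each grid interval to its index, of a kernel on $\{1,\dots,n\}$ equipped with the uniform measure $\frac{1}{n}\sum \delta_i$. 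By \Cref{rem: interval representation} (applied in reverse, or directly by the definition of weak isomorphism via $\rho$), the grid network is therefore weakly isomorphic to an $n$-point $Z$-network. This shows $\bigcup_n \M^Z_n$ is dense.

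To extract a genuinely \emph{countable} dense set, I would intersect the above with a countable dense subset of the values. Fix a countable dense subset $Z_0 \subset Z$ (which exists since $Z$ is separable). For each $n$, there are only countably many functions $\{1,\dots,n\}^2 \to Z_0$, so the collection of $n$-point networks with kernel values in $Z_0$ is countable, and their union over $n$ is countable. Given the grid approximant $\omega_{\mathrm{grid}}$ above, which takes finitely many values in $Z$, I would replace each of these finitely many values by a nearby point of $Z_0$; since $d_Z(\omega_{\mathrm{grid}},\omega_{Z_0})$ can be made uniformly small pointwise, the $L^p$ distance between the two grid kernels is small, and a second application of \Cref{lem: lp comparison} absorbs this into the error. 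Combining the two approximation steps yields a countable dense subset of $\MZp$ contained in $\bigcup_n \M^Z_n$.

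The main obstacle I anticipate is the \emph{bookkeeping of the weak-isomorphism identification}: one must check carefully that a kernel constant on uniform grid squares genuinely realizes an $n$-point network under weak isomorphism, i.e., that the parameter $\rho$ is measure-preserving onto $\{1,\dots,n\}$ with uniform measure and that $\rho^*(\text{discrete kernel}) = \omega_{\mathrm{grid}}$ almost everywhere. This is routine but requires matching the grid step size to $1/n$ and handling the measure-zero grid boundaries. A secondary technical point is that \Cref{prop: lp_grid_approx} is stated for general $d$ but the grid there is on $[0,1]^d$, whereas a kernel lives on $[0,1]^2$ with the \emph{product} Lebesgue measure $\mathscr{L}\otimes\mathscr{L} = \mathscr{L}^2$; I would confirm these agree so that the cited approximation applies verbatim.
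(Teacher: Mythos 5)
Your argument for density is essentially the paper's own proof: reduce to $([0,1],\omega,\mathscr{L})$ via the interval parametrization, approximate $\omega$ by a grid-constant kernel using part~3 of \Cref{prop: lp complete_separable}, identify the grid network with an $n$-point network via the measure-preserving map sending $R_i$ to $i$, and close with \Cref{lem: lp comparison}. That part is correct and matches the paper step for step, including the handling of the grid boundaries and the uniform measure.

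Where you genuinely depart from the paper is the final countability step, and this is to your credit. The set $\bigcup_n \M^Z_n$ as defined admits arbitrary kernels $\{1,\dots,n\}^2 \to Z$, so when $Z$ is uncountable this union is itself uncountable; the word ``countable'' in the statement is not actually established by the paper's proof, which only proves density. Your refinement --- restricting kernel values to a countable dense subset $Z_0 \subset Z$ and absorbing the resulting pointwise perturbation into the $L^p$ error via a second application of \Cref{lem: lp comparison} --- is exactly what is needed to produce a genuinely countable dense subset, and hence to recover separability of $\MZp$ from this proposition directly. So your proof is not only correct but proves a slightly stronger (and more honest) statement than the one the paper's proof delivers.
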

\begin{proof}
    We first take an arbitrary $(Z,p)$-network. By parameterization (\Cref{rem: interval representation}), it is weakly isomorphic to a $(Z,p)$-network of the form $X=([0,1],\omega, \mathscr{L})$. The kernel $\omega$ belongs to $L^p([0,1]^2, \mathscr{L}^2; Z)$, so by \Cref{prop: lp complete_separable}, it can be approximated by a piecewise constant function $\omega_n$ that is constant on a grid of step size $1/n$ with $n$ small enough. That is, we can partition $[0,1]$ into $n$ pieces $R_i = [i/n, (i+1)/n)$ for $i=0,\ldots, n-2$, $R_{n-1}=[(n-1)/n,n]$ and take $\omega_n$ to be constant on each $R_i\times R_j$. The $Z$-network $X_n=([0,1],\omega_n, \mathscr{L})$ is weakly isomorphic to $(\{1,\cdots, n\}, \Omega_n, \frac{1}{n}\sum_{i=1}^{n}\delta_i)$ where $\Omega_n$ is the $n\times n$, $Z$-valued matrix with $\Omega_n(i,j) = \omega_n(i/n, j/n)$ because we can construct a measure preserving map $\phi: X\to \{1,\cdots , n\}$ by $\phi(x) = i$ whenever $x\in R_i$, and this map satisfies $\phi^*\Omega_n = \omega_n$ by definition. Finally, the claim follows by applying \Cref{lem: lp comparison} to $X$ and $X_n$.
\end{proof}

\subsubsection{Completeness}
The $Z$-GW space also inherits completeness from its target space $Z$. Moreover, the opposite direction is also true.

\begin{theorem}\label{thm:complete}
    $\MZp$ is complete if and only if $Z$ is complete.
\end{theorem}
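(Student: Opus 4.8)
The plan is to prove both directions of the equivalence. For the forward direction, I would assume $Z$ is complete and show $\MZp$ is complete; for the reverse, I would assume $\MZp$ is complete and show $Z$ is complete, most naturally by exhibiting an isometric (or bi-Lipschitz) embedding of $Z$ into $\MZp$ so that a Cauchy sequence in $Z$ lifts to a Cauchy sequence of $Z$-networks, whose limit must then descend to a limit in $Z$.

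For the forward direction, I would exploit the isometry $\Theta_p$ of \Cref{thm:quotient_rep_Lp} to transport the problem into the quotient space $\mathbb{L}^p = L^p([0,1]^2,\mathscr{L}^2;Z)/\textnormal{Inv}([0,1],\mathscr{L})$. Since $Z$ is complete and separable, \Cref{prop: lp complete_separable}(1) tells us $L^p([0,1]^2,\mathscr{L}^2;Z)$ is a complete metric space. Thus it suffices to show that the quotient metric $\mathbb{D}_p$ inherits completeness from $D_p$. The standard technique is to take a Cauchy sequence $[\omega_n]$ in $\mathbb{L}^p$, pass to a subsequence with $\mathbb{D}_p([\omega_{n_k}],[\omega_{n_{k+1}}]) < 2^{-k}$, and then inductively choose genuine representatives $\tilde{\omega}_k \in L^p([0,1]^2,\mathscr{L}^2;Z)$ (applying invariant transforms $\phi,\psi \in \textnormal{Inv}([0,1],\mathscr{L})$ to align consecutive terms) so that $D_p(\tilde{\omega}_k,\tilde{\omega}_{k+1}) < 2^{-k+1}$; this makes $(\tilde{\omega}_k)$ Cauchy in the complete space $L^p$, hence convergent to some $\tilde{\omega}$, and one checks $[\tilde{\omega}_k] \to [\tilde{\omega}]$ in $\mathbb{D}_p$, giving the limit of the original sequence. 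The only subtlety is ensuring the alignment transforms can be chosen so that realignment of $\tilde{\omega}_{k+1}$ to match $\tilde{\omega}_k$ does not disturb the previously fixed terms; this is handled by the semigroup structure of $\textnormal{Inv}([0,1],\mathscr{L})$ (pulling everything back by the accumulated composition).

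For the reverse direction, I would embed $Z$ into $\MZp$ via a construction realizing $d_Z$ as a $Z$-GW distance. A clean choice is to fix a two-point base set and, for each $z \in Z$, form the $Z$-network $X_z$ on $\{1,2\}$ with uniform measure $\mu = \tfrac12(\delta_1+\delta_2)$ and kernel $\omega_z$ taking a constant off-diagonal value $z$ (and some fixed diagonal value). Then I would compute $\dgwz(X_z, X_{z'})$ directly: couplings between two copies of the uniform measure on $\{1,2\}$ form a small family, and one verifies that $\dgwz(X_z,X_{z'})$ equals $c \cdot d_Z(z,z')$ for an explicit constant $c>0$ (possibly needing to use a one-point diagonal convention or to weight appropriately so the off-diagonal terms dominate), so that $z \mapsto [X_z]$ is a bi-Lipschitz embedding onto its image. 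Given a Cauchy sequence $(z_n)$ in $Z$, the images $([X_{z_n}])$ are Cauchy in $\MZp$ and hence converge to some $[W] \in \MZp$; the remaining task is to argue that $[W]$ lies in the image of the embedding and thus corresponds to an actual limit point $z_\infty \in Z$ with $z_n \to z_\infty$.

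The main obstacle I anticipate is this last step of the reverse direction: showing that the limit $[W]$ of the embedded Cauchy sequence genuinely comes from a point of $Z$, rather than from some more complicated $Z$-network outside the image of the embedding. Establishing that the image of $z \mapsto [X_z]$ is closed in $\MZp$ (or otherwise extracting the limit value in $Z$ directly from the convergence of distortions) is the delicate part. I expect this can be resolved by combining \Cref{thm: optimal coupling} (optimal couplings exist, so distances are genuinely realized) with a direct analysis of the limiting coupling: the optimal couplings between the simple two-point networks force the off-diagonal kernel values $z_n$ to form a Cauchy sequence in $Z$ in the honest sense, and since the off-diagonal value of the limit network must match, one reads off $z_\infty$ as the $Z$-limit. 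The forward direction, by contrast, is largely a routine completeness-of-quotient argument once \Cref{thm:quotient_rep_Lp} and \Cref{prop: lp complete_separable} are invoked.
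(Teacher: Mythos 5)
Your overall architecture matches the paper's (forward direction via completeness of the metric-space-valued $L^p$ space, reverse direction via embedding points of $Z$ as very simple networks), but both halves have a genuine gap at precisely the steps you describe as routine or ``expected to be resolvable.''

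In the forward direction, the paper does not prove completeness of the quotient $(\mathbb{L}^p,\mathbb{D}_p)$ by aligning representatives; it invokes Sturm's coupling-based argument (glue near-optimal couplings along the Cauchy sequence into a measure on an infinite product, and then use completeness of $L^p(\,\cdot\,;Z)$ from \Cref{prop: lp complete_separable} to extract the limit kernel). Your alignment scheme does not work as written: $\textnormal{Inv}([0,1],\mathscr{L})$ is only a semigroup of non-invertible maps, and $\mathbb{D}_p$ is an infimum over \emph{pairs} $(\phi,\psi)$. Having fixed the representative $a_k=\psi_{k-1}^*\omega_k$ from the previous step, and being handed $\phi_k,\psi_k$ with $D_p(\phi_k^*\omega_k,\psi_k^*\omega_{k+1})$ small, you would need some $\eta$ with $\phi_k\circ\eta=\psi_{k-1}$ (or $\psi_{k-1}\circ\eta=\phi_k$) to splice the two chains, and such an $\eta$ need not exist. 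Pulling everything back by an ``accumulated composition'' preserves all the $D_p$-distances but never forces the two representatives of $\omega_k$ to coincide, so the sequence of genuine representatives you build is not Cauchy. You should instead run the coupling/gluing argument, whose only $Z$-dependent ingredient is exactly the $L^p$ completeness you cite.

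In the reverse direction, you correctly identify the crux---the limit $[W]$ of the embedded Cauchy sequence need not lie in the image of the embedding---but your proposed resolution (``the off-diagonal value of the limit network must match'') presupposes that $W$ is again a two-point network with a well-defined off-diagonal value, which is precisely what you cannot assume. The paper's fix is to embed $z$ as the \emph{one-point} network $(\{0\},\omega(0,0)=z,\delta_0)$: the unique coupling between $\delta_0$ and $\mu_W$ is the product measure, so $2^p\,\dgwz(X_{z_n},W)^p=\int_Z d_Z(z_n,u)^p\,(\omega_W)_*(\mu_W\otimes\mu_W)(du)$ exactly; Markov's inequality then shows that the pushforward measure $(\omega_W)_*(\mu_W\otimes\mu_W)$ concentrates on $\epsilon$-balls around $z_n$, and any point of its support serves as the limit $z_\infty$. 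This also removes the bookkeeping your two-point construction requires (identifying the optimal coupling among the one-parameter family of couplings of the uniform measure on $\{1,2\}$), since for one-point networks one has $\dgwz(X_z,X_{z'})=\tfrac12 d_Z(z,z')$ on the nose.
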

\begin{proof}
    Suppose $Z$ is complete. Then, the proof that $\dgwz$ is complete follows exactly as in the case $Z=\mathbb{R}$ \citep[Theorem 5.8]{sturm2023space}, since $L^p(X\times X, \mu_X \otimes \mu_X; Z)$ is complete (\Cref{prop: lp complete_separable}). This proves one direction of the theorem.
    Let us now consider the converse. We will generalize the proof for the Wasserstein space \citep{pinelis_complete} to our setting. Suppose that $\dgwz$ is complete. Let $\{z_n\}$ be a Cauchy sequence in $Z$. We will show that $\{z_n\}$ converges to some $z\in Z$. For each $n$, let $X_n = (\{0\}, \omega_n, \delta_{0})$ where $\omega_n:\{0\}\times \{0\}\to Z$ is the function $\omega_n(0,0)=z_n$. Then, $\{X_n\}$ is a Cauchy sequence in $\MZp$ with respect to $\dgwz$ because $\dgwz(X_n, X_m)=\frac{1}{2}d_Z(z_n,z_m)$. By completeness, there exists $X=(X,\omega,\mu)\in \MZp$ such that $\dgwz(X_n,X)\to 0$. Since the only coupling with a Dirac measure is the product measure, we can explicitly calculate $\dgwz(X_n,X)$ as
    \begin{align}
        2^p\dgwz(X_n,X)^p & = \int_{X}\int_{X}d_Z(z_n,\omega(x,x'))^p\mu(dx)\mu(dx')  =\int_{Z}d_Z(z_n,z)^p\omega_*(\mu \otimes \mu)(dz)
    \end{align}
    for $p<\infty$. For $p=\infty$, we have 
    \begin{equation}
        \dgwinf(X_n, X) = \frac{1}{2}\|d(z_n, z)\|_{L^{\infty}(\omega_*(\mu\otimes\mu))}
    \end{equation}
    
    For brevity, we denote $\omega_\ast(\mu\otimes \mu)$ by $\mu_{\omega}$. We note that $\mu_{\omega}$ is a probability measure on $Z$ since $\mu\otimes \mu$ is a probability measure on $X\times X$. Therefore, if $p<\infty$, Markov's inequality is applicable to show that
    \begin{equation}
        \epsilon^p(1-\mu_{\omega}(B_{\epsilon}(z_n))) = \epsilon^p \mu_{\omega}(\{z\in Z: d_Z(z_n,z)\geq \epsilon\}) \leq \int_{Z}d_Z(z_n,z)^p\mu_{\omega}(dz)
    \end{equation}
    Moreover, if $p=\infty$, by the monotonicity of $L^p$ norms,
    \begin{equation}
        \epsilon(1-\mu_{\omega}(B_{\epsilon}(z_n)) = \epsilon\mu_{\omega}(\{z\in Z: d_Z(z_n,z)\geq \epsilon\}) \leq \|d_Z(z_n,z)\|_{L^1(\mu_\omega)} \leq \|d_Z(z_n,z)\|_{L^{\infty}(\mu_\omega)},
    \end{equation}
    which shows that $\mu_{\omega}(B_{\epsilon}(z_n))\to 1$ as $n\to \infty$ for any $\epsilon>0$. Here, $B_{\epsilon}(z_n)=\{z\in Z :d_Z(z_n,z)<\epsilon\}$. 
    
    Intuitively, the above means that the mass of $\mu_{\omega}$ is concentrated around $z_n$ as $n\to \infty$, or in other words, $z_n$ gets closer and closer to the support point of $\mu_{\omega}$. Indeed, for any $z\in \textrm{supp}(\mu_{\omega})$, we have $\mu_{\omega}(B_{\epsilon}(z))>0$ for any $\epsilon>0$ by definition. This means that, if $n$ is large enough, $B_{\epsilon}(z)$ and $B_{\epsilon}(z_n)$ should have a nonempty intersection for any $\epsilon>0$ because $\mu_{\omega}(B_{\epsilon}(z))>0$ is a constant and $\mu_{\omega}(B_{\epsilon}(z_n))\to 1$, so the sum of two measures will be larger than $1$ at some point. Thus, for any $\epsilon>0$, there exists a natural number $N$ such that if $n\geq N$, we can take $v\in \textrm{supp}(\mu_{\omega})\cap B_{\epsilon}(z_n)$ so that $d_Z(z,z_n)\leq d_Z(z,v)+d_Z(v,z_n)<2\epsilon$. Since $\epsilon>0$ is arbitrary, this shows that $z_n\to z$ as $n\to \infty$. As any Cauchy sequence converges, we have shown that $Z$ is complete.
\end{proof}

\subsubsection{Path-Connectedness}

One of the distinct properties of $\MZp$ is that it is always contractible, regardless of the topology of $Z$. We prove this below, but first show that $\MZp$ is always path-connected. Intuitively, the result holds because a $Z$-network is equipped with a measure, which is a continuous object taking values on the real line. This intuition plays a fundamental role in the following proof. An important feature of the proof is that it provides an explicit formula for a continuous path between any pair of $Z$-networks. This formula will be used later in the proof of contractibility (see \Cref{lem:one_point_Holder}).

\begin{proposition}
    \label{prop:path-connected}
    For any space $Z$, $\MZp$ is path-connected for all $p\in[1,\infty)$.
\end{proposition}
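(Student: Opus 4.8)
The plan is to exhibit an explicit continuous path between any two $(Z,p)$-networks $X_0 = (X_0, \omega_0, \mu_0)$ and $X_1 = (X_1, \omega_1, \mu_1)$. By the parametrization result (\Cref{rem: interval representation}), I may assume without loss of generality that both networks are defined over the unit interval: say $X_0 = ([0,1], \omega_0, \mathscr{L})$ and $X_1 = ([0,1], \omega_1, \mathscr{L})$, where $\mathscr{L}$ is the Lebesgue measure. The key idea, as the remark preceding the statement suggests, is that the \emph{measure} takes values on the real line, so I can interpolate by continuously blending the two networks along disjoint regions of $[0,1]$, shifting mass gradually from one kernel to the other.

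**The construction.** For $t \in [0,1]$, I would build a $Z$-network $X_t = ([0,1], \omega_t, \mathscr{L})$ as follows. Partition $[0,1]$ into two subintervals, $I_0^t = [0, 1-t)$ of length $1-t$ and $I_1^t = [1-t, 1]$ of length $t$. Using measure-preserving rescalings $r_0^t: I_0^t \to [0,1]$ and $r_1^t: I_1^t \to [0,1]$ (affine bijections), define a kernel $\omega_t$ on $[0,1]^2$ that equals a rescaled copy of $\omega_0$ on $I_0^t \times I_0^t$, a rescaled copy of $\omega_1$ on $I_1^t \times I_1^t$, and is defined on the off-diagonal blocks $I_0^t \times I_1^t$ and $I_1^t \times I_0^t$ by some fixed filler value (say a constant $z_0 \in Z$). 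At $t=0$ this recovers $X_0$ (up to the measure-zero filler region) and at $t=1$ it recovers $X_1$, so the endpoints are correct up to weak isomorphism. The intuition is that as $t$ increases from $0$ to $1$, the block carrying $\omega_1$ grows while the block carrying $\omega_0$ shrinks.

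**Continuity** is the heart of the argument, and it is where I expect the main work to lie. I would estimate $\dgwz(X_s, X_t)$ for $s \le t$ by choosing an explicit (suboptimal) coupling between $\mu_s = \mathscr{L}$ and $\mu_t = \mathscr{L}$ that aligns the two networks' corresponding blocks as much as possible—matching the $\omega_0$-blocks with each other, the $\omega_1$-blocks with each other, and absorbing the discrepancy (of total measure proportional to $|t-s|$) into the mismatched region. Plugging this coupling into $\disp^Z$ and using \Cref{lem: lp comparison}-style reasoning, the cost should be bounded by a constant times $|t-s|^{1/p}$ (the $1/p$ power arising because the mismatched region has measure $O(|t-s|)$ and contributes to the $L^p$ integrand). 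This gives Hölder continuity of $t \mapsto X_t$ in the $\dgwz$ metric, hence continuity, and establishes that $X_t$ is a genuine path. I anticipate the filler-value contribution and the bookkeeping of the block rescalings to be the fiddly part: I must confirm that the $p$-th moment integrals over the growing/shrinking blocks behave continuously in $t$, which follows because each block integral is (up to rescaling) a fixed fraction of a finite $L^p$ quantity. The restriction to $p < \infty$ in the statement is presumably because the $1/p$-power bound degenerates as $p \to \infty$—in the $p=\infty$ case, the filler region contributes its full cost regardless of how small its measure is, breaking continuity of this particular interpolation.
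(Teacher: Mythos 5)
Your proposal is correct and is essentially the paper's proof in different coordinates: the paper builds the path on the disjoint union $X \amalg Y$ with a $t$-independent kernel $\omega_X \amalg \omega_Y$ and the linearly interpolated measure $(1-t)\mu_X + t\mu_Y$, which, once parametrized over $[0,1]$, is exactly your growing/shrinking-block construction with a fixed filler on the cross-blocks (and the two paths are weakly isomorphic at every $t$). The paper's coupling $\pi_{s,t} = (1-t)\Delta^X_*\mu_X + (t-s)\pi + s\Delta^Y_*\mu_Y$ is the disjoint-union version of your block-aligned coupling, and it yields the same $O(|t-s|^{1/p})$ H\"older estimate, with the cross-term contributions vanishing for the same reason you anticipate.
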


The $p=\infty$ case is more subtle, and is not completely resolved, as we describe below.

\begin{proof}
 For any $Z$-networks $(X,\omega_X, \mu_X), (Y,\omega_Y,\mu_Y)$, define a path of $Z$-networks by 
 \begin{equation}\label{eqn:explicit_path}
 X_t = (X\amalg Y, \omega_X \amalg \omega_Y, (1-t)\mu_X+t\mu_Y) \quad \mbox{for} \quad t\in [0,1]. 
 \end{equation}
 Here, $\omega_X \amalg \omega_Y: X\amalg Y\to Z$ is any function (independent of $t$) that satisfies $\omega_X \amalg \omega_Y(x,x') = \omega_X(x,x')$ for $x,x' \in X$ and $\omega_X \amalg \omega_Y(y,y') = \omega_Y(y,y')$ for $y,y' \in Y$, and the measure is defined by 
 \[
 ((1-t)\mu_X+t\mu_Y)(A) = (1-t)\mu_X(A\cap X) + t\mu_Y(A\cap Y)
 \]
 for any measurable set $A\subset X\amalg Y$. 
 
 First, we note that $X_0$ and $X_1$ are weakly isomorphic to $X$ and $Y$, respectively. For $X_0$ and $X$, this can be seen by pulling back $X\amalg Y$ to $X$ by the inclusion map. In other words, for two $Z$-networks $X=(X,\omega_X,\mu_X)$ and $X_0=(X\amalg Y, \omega_X\amalg \omega_Y, \mu_X)$, we consider $W=X$, the identity map $\phi_X:W\to X$ and the inclusion map $\phi_{X_0}:X\to X\amalg Y$. The triple $W,\phi_X,\phi_{X_0}$ satisfies the definition of weak isomorphism. The $X_1$ case is similar.
 
 We will now prove that the path $X_t$ is continuous with respect to the GW distance. To see this, take any coupling $\pi$ between $X,Y$ and define a coupling $\pi_{s,t}$ between $X_s$ and $X_t$ for $s<t$ by 
 \[
 \pi_{s,t}=(1-t)\Delta^X_*\mu_X+(t-s)\pi+s\Delta^Y_*\mu_Y
 \]
 where 
 \[
 \Delta^X:X\to (X\amalg Y)^2 \quad \mbox{and} \quad \Delta^Y: Y\to (X\amalg Y)^2
 \]
 are mappings into the diagonal of $X$ and $Y$, respectively. Moreover, $\pi$ here as a measure on $(X\amalg Y)^2$ is defined as $\pi(A\cap (X\times Y))$ for measurable $A\subset (X\amalg Y)^2$. The coupling $\pi_{s,t}$ represents a transport plan where we keep $1-t$ and $s$ units of mass at $X$ and $Y$, respectively, and send $t-s$ units from $X$ to $Y$. By definition, we have
    \begin{align}
        2^p\dgwz(X_s, X_t)^p \leq & (1-t)^2 I_{X X} + (t-s)^2I_{\pi\pi} + t^2I_{YY}+2(1-t)(t-s)I_{X \pi} \\ 
        & \hspace{2in} +2(t-s)sI_{\pi Y}+ 2s(1-t)I_{Y X}.
    \end{align}
    Here, $I_{\mu\nu}$, where $\mu,\nu \in \{X,Y,\pi\}$ is the integral
    \begin{equation}
        \int_{(X\amalg Y)^2}\int_{(X\amalg Y)^2}d_Z(\omega_X\amalg \omega_Y(u,u'), \omega_X \amalg \omega_Y(v,v'))^p\mu(du\times dv)\nu(du'\times dv'),
    \end{equation}
    where $X$ and $Y$ are abusively being used as shorthand for $\Delta^X_*\mu_X$ and $\Delta^Y_*\mu_Y$, respectively. We note that $I_{XX}, I_{YX}$ and $I_{YY}$ vanish since the integrand is identically zero by the definition of $\Delta_*^X\mu_X$ and $\Delta_*^Y\mu_Y$. Therefore,
    \begin{align}
        2^p\dgwz(X_s, X_t)^p & \leq (t-s)^2I_{\pi\pi} +2(1-t)(t-s)I_{X \pi} +2(t-s)sI_{\pi Y} \\ &\leq (t-s)(I_{\pi\pi} +2I_{X\pi} + 2 I_{\pi Y}) \label{eqn:proof_of_continuity_paths}
    \end{align}
    Since $I_{\pi\pi}, I_{X\pi}$ and $I_{\pi Y}$ are independent of time, this proves that our path $X_t$ is $1/p$-Hölder continuous. Therefore, $\MZp$ is path-connected.
\end{proof}

Let us now discuss the path-connectivity properties of $\mathcal{M}^{Z,\infty}_\sim$. 
To do so, it will be useful to introduce an invariant of a $Z$-network $X = (X,\omega_X,\mu_X)$. For a fixed point $z \in Z$, we define the \define{$p$-size of $X$, relative to $z$} to be 
\begin{equation}\label{eqn:size_function}
\mathrm{size}_{p,z}(X) \coloneqq \|d_Z(\omega_X(\cdot,\cdot),z)\|_{L^p(\mu_X \otimes \mu_X)}
\end{equation}
(see also~\Cref{def:invariants} below). It is a fact that, for any $Z$-networks $X$ and $Y$,
\begin{equation}\label{eqn:size_bound}
\dgwz(X,Y) \geq \frac{1}{2}|\mathrm{size}_{p,z}(X) - \mathrm{size}_{p,z}(Y)|.
\end{equation}
Indeed, we prove this below in~\Cref{thm:lower_bounds}, in the context of several lower bounds on the $Z$-GW distance.

We now show that \Cref{prop:path-connected} fails in the $p=\infty$ case. Intuitively, the proof strategy does not apply because it is based on manipulating ``weights" in the measures, whereas $\mathrm{GW}^Z_\infty$ is insensitive to these weights and only optimizes a quantity which depends on supports of couplings.

\begin{example}\label{ex:no_path_continuity}
    Let $Z = \{0,1\}\subset \R$, with the restriction of Euclidean distance. We claim that $\mathcal{M}^{Z,\infty}_\sim$ is not path connected. To see this, observe that the size function \eqref{eqn:size_function} has the property that
    \[
    \mathrm{size}_{\infty,0}(X) \in \{0,1\}
    \]
    for all $X \in \mathcal{M}^{Z,\infty}_\sim$. By \Cref{thm:lower_bounds} (or \eqref{eqn:size_bound}), the function $\mathrm{size}_{\infty,0}:\mathcal{M}^{Z,\infty}_\sim \to \R$ is Lipschitz continuous. This means that, if $X_t$ is a continuous path in $\mathcal{M}^{Z,\infty}_\sim$, then the composition 
    \[
    t \mapsto \mathrm{size}_{\infty,0}(X_t):[0,1] \to \{0,1\}
    \]
    must be continuous, and therefore constant. It follows that there is no path joining a $(Z,\infty)$-network of size $0$ to one of size $1$. For a specific example, there is no continuous path from 
    \[
    X=(\{0\},\omega_X(0,0)=0,\delta_0) \quad \mbox{to} \quad Y=(\{1\},\omega_Y(1,1)=1, \delta_1)
    \]
    in $\mathcal{M}^{Z,\infty}_\sim$. 
\end{example}

The idea of the justification in \Cref{ex:no_path_continuity} can be extended to show that path-continuity of $Z$ is a necessary condition for path-continuity of $\mathcal{M}_\sim^{Z,\infty}$ (we omit the details of this extension here). However, we were unable to determine whether this condition is sufficient. On the other hand, if $Z$ is geodesic then so is $\mathcal{M}_\sim^{Z,\infty}$, as we show below in \Cref{thm:geodesic}. This observation illustrates  the subtlety of the following question:

\begin{question}\label{question:path_connectivity}
    Does path-connectivity of $Z$ imply path-connectivity of $\mathcal{M}_\sim^{Z,\infty}$?
\end{question}

\subsubsection{Contractibility} 

We now take the path-connectivity result (\Cref{prop:path-connected}) a step further and show that $\MZp$ is always contractible. 

\begin{theorem}
    \label{thm:contractible}
    For any space $Z$, $\MZp$ is contractible for all $p \in [1,\infty)$.
\end{theorem}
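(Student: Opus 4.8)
The plan is to exhibit an explicit strong deformation retraction of $\MZp$ onto a fixed one-point network, reusing the path from \Cref{prop:path-connected}. Fix $z_0 \in Z$ and let $\ast = (\{p_0\}, \omega_\ast, \delta_{p_0})$ be the one-point $Z$-network with $\omega_\ast(p_0,p_0) = z_0$. For $X = (X,\omega_X,\mu_X)$ and $t \in [0,1]$, define
\[
H(X,t) = \big(X \amalg \{p_0\},\, \Omega_X,\, (1-t)\mu_X + t\delta_{p_0}\big),
\]
exactly as in \eqref{eqn:explicit_path} with $Y = \ast$, where the crucial choice is to set \emph{every} cross-kernel value to the base point: $\Omega_X|_{X\times X} = \omega_X$, $\Omega_X(p_0,p_0) = z_0$, and $\Omega_X(x,p_0) = \Omega_X(p_0,x) = z_0$ for all $x \in X$. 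Just as in \Cref{prop:path-connected}, $H(X,0) \sim X$ (the $p_0$-point carries no mass) and $H(X,1) \sim \ast$, so once $H$ is shown to be jointly continuous it is the desired contraction.

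The core of the argument is joint continuity of $H: \MZp \times [0,1] \to \MZp$, which I would split into a Lipschitz estimate in the network variable (uniform in $t$) and a H\"older estimate in time. For the first, given $X,X'$ and $\pi \in \mathcal{C}(\mu_X,\mu_{X'})$, I build the coupling $\tilde\pi = (1-t)\pi + t\,\delta_{(p_0,p_0)}$ between $H(X,t)$ and $H(X',t)$. Because every kernel value touching $p_0$ equals $z_0$ in both networks, the three families of terms in $\tilde\pi\otimes\tilde\pi$ that involve $p_0$ contribute $d_Z(z_0,z_0)=0$, leaving only $\mathrm{dis}_p^Z(\tilde\pi)^p = (1-t)^2\,\mathrm{dis}_p^Z(\pi)^p$. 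Taking the infimum over $\pi$ gives
\[
\dgwz(H(X,t), H(X',t)) \leq (1-t)\,\dgwz(X,X') \leq \dgwz(X,X'),
\]
so $X \mapsto H(X,t)$ is $1$-Lipschitz uniformly in $t$; in particular $\dgwz(X,X')=0$ forces $\dgwz(H(X,t),H(X',t))=0$, so $H$ descends to a well-defined map on weak-isomorphism classes.

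For the time variable I would specialize the estimate \eqref{eqn:proof_of_continuity_paths} to the contraction path. With $Y = \ast$ the only coupling is $\pi = \mu_X\otimes\delta_{p_0}$, and the cross-kernel choice makes the surviving integrals explicit: $I_{\pi\pi}=I_{X\pi}=\size(X)^p$ and $I_{\pi Y}=0$, where $\size$ is the size function \eqref{eqn:size_function} relative to $z_0$. This yields
\[
\dgwz(H(X,s), H(X,t)) \leq C\,\size(X)\,|t-s|^{1/p},
\]
which I would record as the standalone lemma \Cref{lem:one_point_Holder}. Combining the two estimates via the triangle inequality,
\[
\dgwz(H(X,t), H(X_0,t_0)) \leq \dgwz(X,X_0) + C\,\size(X_0)\,|t-t_0|^{1/p},
\]
gives joint continuity at every $(X_0,t_0)$: the first term is controlled by proximity in $\MZp$ and the second by the \emph{fixed} size of $X_0$ together with $|t-t_0|$.

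The step I expect to be the main obstacle, and the key idea, is the choice of cross-kernel values. The disjoint-union kernel $\Omega_X$ is constrained only on $X\times X$ and $\{p_0\}\times\{p_0\}$, and a careless assignment would leave the mixed terms in both the Lipschitz and H\"older computations intractable; in particular the uniform Lipschitz constant would be lost. Setting every cross value to $z_0$ is precisely what simultaneously annihilates the mixed terms of $\tilde\pi\otimes\tilde\pi$ and forces $I_{\pi Y}=0$, decoupling the two estimates. Once this is in place the remainder is the bookkeeping above, and contractibility of $\MZp$ follows for all $p\in[1,\infty)$; note the argument genuinely uses $p<\infty$ through the exponent $1/p$, consistent with the obstruction in \Cref{ex:no_path_continuity}.
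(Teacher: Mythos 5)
Your proposal is correct and uses the same contraction and the same two basic estimates as the paper (the Lipschitz bound in the network variable for fixed $t$, and the H\"older-in-time bound of \Cref{lem:one_point_Holder}), but you assemble them into joint continuity in a genuinely different, and cleaner, way. The paper splits $\dgwz(X_s,Y_t) \leq \dgwz(X_s,X_t) + \dgwz(X_t,Y_t)$, so the time-variation term carries $\mathrm{size}_{p,z}(X)$ for the \emph{moving} network $X$; this forces an extra appeal to the size lower bound \eqref{eqn:size_bound} to trade $\mathrm{size}_{p,z}(X)$ for $\mathrm{size}_{p,z}(Y) + 2\dgwz(X,Y)$, followed by a delicate choice of $\delta$ under the normalization \eqref{eqn:epsilon_assumption} and a separate treatment of the cases $\mathrm{size}_{p,z}(Y)=0$ and $t=1$. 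You instead split $\dgwz(H(X,t),H(X_0,t_0)) \leq \dgwz(H(X,t),H(X_0,t)) + \dgwz(H(X_0,t),H(X_0,t_0))$, putting the time variation on the \emph{fixed} network $X_0$, so the bound $\dgwz(X,X_0) + C\,\mathrm{size}_{p,z}(X_0)\,|t-t_0|^{1/p}$ has a constant coefficient and joint continuity is immediate, with no size-bound trick and no special cases. One small slip: from $\mathrm{dis}_p^Z(\tilde\pi)^p = (1-t)^2\,\mathrm{dis}_p^Z(\pi)^p$ the correct Lipschitz constant is $(1-t)^{2/p}$, not $(1-t)$ (these disagree for $p \neq 2$, and for $p>2$ your stated constant is too strong); this is harmless because the only thing your argument uses is that the constant is at most $1$ uniformly in $t$, which holds either way.
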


The proof will use the following lemma, which specializes some features of the proof of \Cref{prop:path-connected} to the case where one of the $Z$-networks is a one-point space. The result uses the size functions introduced in \eqref{eqn:size_function}. The proof of the lemma is provided in \Cref{app:one_point_Holder}.

\begin{lemma}\label{lem:one_point_Holder}
    Let $X = (X,\omega_X,\mu_X)$ be an arbitrary $Z$-network, and let $Y = (\{\star\},\omega_Y,\delta_\star)$ be a one-point $Z$-network, where $\omega_Y(\star,\star) = z$ for some fixed $z \in Z$, and $\delta_\star$ denotes the Dirac measure. Let $X_t$ denote the path defined in \eqref{eqn:explicit_path} between $X$ and $Y$ (considered up to weak isomorphism), where we specifically define 
    \[
    \omega_X \amalg \omega_Y(u,v) := \left\{\begin{array}{lr}
    \omega_X(u,v) & \mbox{if } u,v \in X \\
    z & \mbox{otherwise.}
    \end{array}\right.
    \]
    Then the H\"{o}lder estimate \eqref{eqn:proof_of_continuity_paths} simplifies to
    \begin{equation}\label{eqn:Holder_estimate_one_point}
    \dgwz(X_s,X_t) \leq \frac{(3 |t-s|)^{1/p}}{2} \cdot \mathrm{size}_{p,z}(X).
    \end{equation}
\end{lemma}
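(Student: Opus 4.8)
The plan is to specialize the general Hölder computation carried out in the proof of \Cref{prop:path-connected} to the present one-point situation, where the formulas collapse dramatically. Recall that for $s < t$ that proof produced the bound \eqref{eqn:proof_of_continuity_paths}, namely
\[
2^p \dgwz(X_s, X_t)^p \leq (t-s)\big(I_{\pi\pi} + 2 I_{X\pi} + 2 I_{\pi Y}\big),
\]
built from an arbitrary coupling $\pi \in \mathcal{C}(\mu_X, \mu_Y)$. The first thing I would observe is that here $\mu_Y = \delta_\star$, so the only available coupling is the product $\pi = \mu_X \otimes \delta_\star$; there is no optimization to perform, and every integral below becomes completely explicit.

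The heart of the argument is then to evaluate the three integrals $I_{\pi\pi}$, $I_{X\pi}$, $I_{\pi Y}$ under the specific choice of $\omega_X \amalg \omega_Y$ stipulated in the lemma, which assigns the value $z$ to every pair not lying entirely in $X \times X$. For $I_{\pi\pi}$, integrating the outer pair against $\pi$ places its first coordinate at a $\mu_X$-distributed point and its second coordinate at $\star$, and likewise for the inner pair; hence $\omega_X \amalg \omega_Y$ evaluates to $\omega_X(\cdot,\cdot)$ on the $X$-coordinates and to $z$ at $(\star,\star)$, so the integrand reduces to $d_Z(\omega_X(\cdot,\cdot), z)^p$ and \eqref{eqn:size_function} gives $I_{\pi\pi} = \mathrm{size}_{p,z}(X)^p$. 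The same bookkeeping for $I_{X\pi}$—where one pair runs over the diagonal $\Delta^X_\ast \mu_X$ and the other over $\pi$—again produces $d_Z(\omega_X(\cdot,\cdot), z)^p$ as the integrand, so $I_{X\pi} = \mathrm{size}_{p,z}(X)^p$ as well. The crucial simplification is $I_{\pi Y}$: since $\Delta^Y_\ast \delta_\star = \delta_{(\star,\star)}$, both kernel evaluations in its integrand equal $z$, making the integrand $d_Z(z,z)^p = 0$ identically, so $I_{\pi Y} = 0$.

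Substituting these three values into the displayed bound yields $2^p \dgwz(X_s, X_t)^p \leq (t-s)\big(\mathrm{size}_{p,z}(X)^p + 2\,\mathrm{size}_{p,z}(X)^p\big) = 3(t-s)\,\mathrm{size}_{p,z}(X)^p$, and taking $p$-th roots (replacing $t-s$ by $|t-s|$ to discard the ordering assumption) produces exactly \eqref{eqn:Holder_estimate_one_point}. I expect the only real care needed is the accounting of which arguments of $\omega_X \amalg \omega_Y$ land in $X$ versus at the point $\star$ in each integral; there is no analytic difficulty, and the decisive structural facts are that the one-point marginal forces a unique product coupling and that the chosen $\omega_X \amalg \omega_Y$ kills $I_{\pi Y}$ while collapsing the remaining two integrals onto the single invariant $\mathrm{size}_{p,z}(X)$.
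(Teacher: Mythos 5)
Your proposal is correct and follows essentially the same route as the paper's own proof: identify the unique product coupling with $\delta_\star$, evaluate $I_{\pi\pi}=I_{X\pi}=\mathrm{size}_{p,z}(X)^p$ and $I_{\pi Y}=0$ from the specific choice of $\omega_X\amalg\omega_Y$, and substitute into \eqref{eqn:proof_of_continuity_paths}. Nothing is missing.
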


We now proceed with the proof of \Cref{thm:contractible}. The strategy is to define an explicit contraction to a one-point space which follows paths as defined in the proof of \Cref{prop:path-connected}. Namely, fix an arbitrary point $z \in Z$ and, for any $Z$-network $X = (X,\omega_X,\mu_X)$ and $t \in [0,1]$, let $X_t$ denote the $Z$-network 
 \begin{equation}\label{eqn:contractible}
 X_t = (X \amalg \{\star\}, \hat{\omega}_X, (1-t) \mu_X + t \delta_\star),
 \end{equation}
 where $\star$ is an abstract point, 
 \[
 \hat{\omega}_X(x,x') = \left\{\begin{array}{lr}
 \omega_X(x,x') & x,x' \in X \\
 z & \mbox{otherwise,}\end{array}\right.
 \]
 and $\delta_\star$ is the Dirac measure. This is then the path from $X$ to a one-point space, as in \Cref{prop:path-connected} and \Cref{lem:one_point_Holder}, where we are using the notation $\hat{\omega}_X$ rather than $\omega_X \amalg \omega_Y$ to condense notation. This will be used to construct the contraction.

\begin{proof}of \Cref{thm:contractible}.
  We define a map $\Phi:\MZp \times [0,1] \to \MZp$ by 
 \[
 \Phi\big([X],t) = [X_t],
 \]
 where we are using $[X]$ to denote the weak isomorphism class of a $Z$-network $X$, and $X_t$ is as in \eqref{eqn:contractible}. Arguments similar to those used in the proof of \Cref{prop:path-connected} show that $\Phi$ is well defined, and that, for any $Z$-network $X$, $X_0 \sim X$, and $X_1$ is weakly isomorphic to the $Z$-network 
 \[
 (\{\star\},\omega_{\{\star\}}, \delta_\star), \quad \mbox{with} \quad \omega_{\{\star\}}(\star,\star) = z,
 \]
 so that $\Phi(\cdot,1)$ is a constant map. It remains to show that $\Phi$ is continuous. This will be done in two steps: {\bf Step 1.} We show that the component function $\Phi(\cdot,t)$ is Lipschitz continuous for each fixed $t \in [0,1]$. {\bf Step 2.} We then combine this with various estimates to derive continuity in general.
 
 \smallskip
 \noindent {\it Step 1 (Lipschitz Continuity for Fixed $t$).} Following the plan described above, fix $t \in [0,1]$ and consider the restricted map
 \[
 \Phi(\cdot,t):\MZp \to \MZp.
 \]
 Toward establishing its continuity, let $X$ and $Y$ be $Z$-networks and choose an optimal coupling $\pi \in \mathcal{C}(\mu_X,\mu_Y)$. Let $X_t$ and $Y_t$ be as in \eqref{eqn:contractible}. We extend $\pi$ to a measure on 
 \begin{equation}\label{eqn:product_space_decomposition}
 W = (X \amalg \{\star\}) \times (Y \amalg \{\star\}) = (X \times Y) \amalg (X \times \{\star\}) \amalg (\{\star\} \times Y) \amalg \{(\star,\star)\}
 \end{equation}
 as $\overline{\pi} = (1-t) \iota_\ast \pi + t \delta_{(\star,\star)}$, where $\iota:X \times Y \hookrightarrow W$ denotes the inclusion map. Observe that, amongst the terms in the decomposition \eqref{eqn:product_space_decomposition}, $\overline{\pi}$ is supported only on $X \times Y$ and $\{(\star,\star)\}$.
 The $Z$-GW distance between $X_t$ and $Y_t$ is bounded above by the cost of the coupling $\overline{\pi}$; in the case that $p < \infty$ (with the $p=\infty$ case being similar), this implies
{\small
 \begin{align}
     2^p\dgwz(X_t,Y_t)^p &\leq \iint_{W \times W} d_Z(\hat{\omega}_X(x,x'),\hat{\omega}_Y(y,y'))^p \overline{\pi}(dx \times dy) \overline{\pi}(dx' \times dy') \\
     &= \left(\iint_{(X \times Y)^2} + \iint_{\{(\star,\star)\}^2} \right)d_Z(\hat{\omega}_X(x,x'),\hat{\omega}_Y(y,y'))^p \overline{\pi}(dx \times dy) \overline{\pi}(dx' \times dy')  \label{eqn:contractible_proof_1} \\
     &= (1-t)^2 \iint_{(X \times Y)^2} d_Z(\omega_X(x,x'),\omega_Y(y,y'))^p \pi(dx \times dy) \pi(dx' \times dy') \label{eqn:contractible_proof_2} \\
     &= (1-t)^2 2^p\dgwz(X,Y)^p, \label{eqn:contractible_proof_3}
 \end{align}}with the various equalities justified as follows:  \eqref{eqn:contractible_proof_1} uses the observation above on the support of $\overline{\pi}$, together with the fact that 
 \[
 d_Z(\hat{\omega}_X(x,\star),\hat{\omega}_Y(y,\star)) = d_Z(\hat{\omega}_X(\star,x),\hat{\omega}_Y(\star,y)) = 0, \qquad \forall \;(x,y) \in X \times Y,
 \] 
 so that the remaining ``cross-term" integrals $\iint_{(X\times Y) \times \{(\star,\star)\}}$ and $\iint_{\{(\star,\star)\} \times (X\times Y)}$ vanish; \eqref{eqn:contractible_proof_2} follows by the definitions of $\hat{\omega}_X$, $\hat{\omega}_Y$ and $\overline{\pi}$; finally,  \eqref{eqn:contractible_proof_3} is given by the optimality of $\pi$. This shows that $\Phi(\cdot,t)$ is Lipschitz continuous.

 \smallskip
 \noindent {\it Step 2 (Continuity in General).}
 We now show that the map $\Phi$ is continuous. Fix a $Z$-network $Y$ and $t \in [0,1]$, and let $\epsilon > 0$. For the moment, let us assume that $\mathrm{size}_{p,z}(Y) > 0$ and $t < 1$ (the remaining special case will be addressed later), so that we may assume without loss of generality (for technical reasons) that
 \begin{equation}\label{eqn:epsilon_assumption}
 \epsilon < \frac{3}{2} \mathrm{size}_{p,z}(Y) (1-t)^{2/p}, \quad \mbox{or} \quad \frac{2\epsilon}{3\,\mathrm{size}_{p,z}(Y)(1-t)^{2/p}} < 1.
 \end{equation}
 We will determine $\delta > 0$ such that
 \[
 \max\{\dgwz(X,Y),|t-s|\} < \delta \Rightarrow \dgwz(X_s,Y_t) < \epsilon,
 \]
 for an arbitrary $Z$-network $X$ and $s \in [0,1]$. We have
 \begin{align}
     \dgwz(X_s,Y_t) &\leq \dgwz(X_s,X_t) + \dgwz(X_t,Y_t) \\
     &\leq \frac{(3|t-s|)^{1/p}}{2} \mathrm{size}_{p,z}(X) + (1-t)^{2/p} \dgwz(X,Y) \label{eqn:joint_continuity_1} \\
     &\leq  \frac{(3|t-s|)^{1/p}}{2} \mathrm{size}_{p,z}(Y) + (3|t-s|)^{1/p} \dgwz(X,Y) + (1-t)^{2/p} \dgwz(X,Y) \label{eqn:joint_continuity_2}
 \end{align}
 where \eqref{eqn:joint_continuity_1} follows by \Cref{lem:one_point_Holder} and the Lipschitz bound \eqref{eqn:contractible_proof_3}, and \eqref{eqn:joint_continuity_2} follows by the lower bound \eqref{eqn:size_bound} on $Z$-GW distance in terms of size functions. Choose $\delta > 0$ such that
 \[
 \delta < \min\left\{\frac{1}{3}\left(\frac{2\epsilon}{3\mathrm{size}_{p,z}(Y)}\right)^p, \frac{\epsilon}{3(1-t)^{2/p}}   \right\}.
 \]
 The assumption
 $
 \max\{\dgwz(X,Y),|t-s|\} < \delta
 $
 then implies 
 \begin{align}
     &\frac{(3|t-s|)^{1/p}}{2} \mathrm{size}_{p,z}(Y) + (3|t-s|)^{1/p} \dgwz(X,Y) + (1-t)^{2/p} \dgwz(X,Y) \\
     &< \frac{2\epsilon}{3\mathrm{size}_{p,z}(Y)} \cdot \frac{1}{2} \mathrm{size}_{p,z}(Y) + \frac{2\epsilon}{3\mathrm{size}_{p,z}(Y)} \cdot \frac{\epsilon}{3(1-t)^{2/p}} + (1-t)^{2/p} \cdot \frac{\epsilon}{3(1-t)^{2/p}} \\
     &= \frac{\epsilon}{3} + \frac{2\epsilon}{3\mathrm{size}_{p,z}(Y)(1-t)^{2/p}} \cdot \frac{\epsilon}{3} + \frac{\epsilon}{3} < \epsilon, \label{eqn:contrability_proof_7}
 \end{align}
 where we have used the assumption \eqref{eqn:epsilon_assumption} to simplify the middle term in \eqref{eqn:contrability_proof_7}. This proves  continuity of $\Phi$ at any $Z$-network $Y$ with $\mathrm{size}_{p,z}(Y) > 0$ and $t < 1$. 
 
  It remains to consider the cases $\mathrm{size}_{p,z}(Y) = 0$ or $t = 1$. In either case, $Y_t$ is weakly isomorphic to $X_1$, and continuity amounts to controlling $\dgwz(X_s,X_1)$. Applying \Cref{lem:one_point_Holder} and the size bound  \eqref{eqn:size_bound} gives
  \begin{align}
  \dgwz(X_s,Y_t) = \dgwz(X_s,X_1) &\leq \frac{(3(1-s))^{1/p}}{2} \mathrm{size}_{p,z}(X)\\
  &\leq \frac{(3(1-s))^{1/p}}{2}(\mathrm{size}_{p,z}(Y) + 2 \dgwz(X,Y)). \label{eqn:special_case_bound}
  \end{align}
  If $t=1$, then \eqref{eqn:special_case_bound} can be made arbitrarily small by taking $s$ sufficiently close to $1$ and $X$ sufficiently close to $Y$. If $\mathrm{size}_{p,z}(Y)= 0$, then we get 
  \[
   \frac{(3(1-s))^{1/p}}{2}(\mathrm{size}_{p,z}(Y) + 2 \dgwz(X,Y)) \leq 3^{1/p} \dgwz(X,Y),
  \]
  which gives control of $\dgwz(X_s,Y_t)$ in terms of $\dgwz(X,Y)$, and this completes the proof. 
\end{proof}

In analogy with the discussion in the previous subsection, the proof idea for this result fails in the $p=\infty$ case, which remains open. \begin{question}\label{question:contractibility}
    How does contractibility of $\mathcal{M}_\sim^{Z,\infty}$ depend on that of $Z$?
\end{question}

\subsubsection{Geodesicity}

A natural question is whether the path-connectedness result \Cref{prop:path-connected} can be pushed further to a statement about the existence of geodesics. Although this question is not fully solved, we have the following result passing the geodesicity of $Z$ to $\MZp$.

\begin{theorem}\label{thm:geodesic}
    If $Z$ is geodesic, then so is $\MZp$.
\end{theorem}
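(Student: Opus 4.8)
The plan is to build an explicit geodesic by transporting the kernel along geodesics in the target space $Z$, guided by an optimal coupling. First I would invoke \Cref{thm: optimal coupling} to fix a coupling $\pi \in \mathcal{C}(\mu_X,\mu_Y)$ realizing $\dgwz(X,Y)$, and then work on the single product measure space $(X \times Y, \pi)$. Writing $\tilde{\omega}_X((x,y),(x',y')) := \omega_X(x,x')$ and $\tilde{\omega}_Y((x,y),(x',y')) := \omega_Y(y,y')$ for the two pulled-back kernels, the idea is to interpolate, pairwise, along a geodesic in $Z$ from $\tilde{\omega}_X$ to $\tilde{\omega}_Y$. Concretely, assuming a measurable geodesic-selection map $G: Z \times Z \times [0,1] \to Z$ (addressed below) for which $t \mapsto G(z,z',t)$ is a constant-speed geodesic from $z$ to $z'$, I would define a family of kernels on $(X\times Y)^2$ by
\[
\omega_t\big((x,y),(x',y')\big) := G\big(\omega_X(x,x'),\,\omega_Y(y,y'),\,t\big),
\]
and set $\gamma(t) := (X \times Y, \omega_t, \pi)$. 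Each $\omega_t$ lies in $L^p$ by the triangle inequality, since $d_Z(G(a,b,t),z_0) \le d_Z(a,b) + d_Z(a,z_0)$ while $\tilde{\omega}_X,\tilde{\omega}_Y \in L^p(\pi \otimes \pi)$, so every $\gamma(t)$ is a genuine $(Z,p)$-network.

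I expect the measurable selection $G$ to be the main obstacle and the true crux of the argument. The set $\Gamma(z,z') \subset C([0,1],Z)$ of constant-speed geodesics from $z$ to $z'$ is nonempty (as $Z$ is geodesic) and closed (the condition $d_Z(\sigma(s),\sigma(t)) = |s-t|\,d_Z(z,z')$ survives uniform limits). Since $Z$ is Polish, $C([0,1],Z)$ with the sup metric is Polish, and one verifies that $(z,z') \mapsto \Gamma(z,z')$ is a measurable set-valued map with nonempty closed values; the Kuratowski--Ryll-Nardzewski selection theorem then furnishes a measurable selection $(z,z') \mapsto \sigma_{z,z'}$, from which $G(z,z',t) := \sigma_{z,z'}(t)$ is measurable in $(z,z')$ and geodesic (hence continuous) in $t$, so jointly measurable. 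Composing with the measurable maps $\omega_X,\omega_Y$ makes each $\omega_t$ measurable.

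Finally I would check the endpoint and geodesic conditions. For the endpoints, $\gamma(0) = (X\times Y, \tilde{\omega}_X, \pi)$ is weakly isomorphic to $X$: taking $W = (X\times Y,\pi)$, both the identity map and the projection $\mathrm{proj}_X$ are measure-preserving and satisfy $\mathrm{proj}_X^\ast \omega_X = \tilde{\omega}_X$, fitting \Cref{def:weak_isomorphism}; symmetrically $\gamma(1) \sim Y$. For the geodesic property, since $\gamma(s)$ and $\gamma(t)$ share the underlying space $(X\times Y,\pi)$, \Cref{lem: lp comparison} gives $\dgwz(\gamma(s),\gamma(t)) \le \tfrac12 D_p(\omega_s,\omega_t)$, while the constant-speed property of $G$ yields the pointwise identity $d_Z(\omega_s,\omega_t) = |t-s|\,d_Z(\tilde{\omega}_X,\tilde{\omega}_Y)$, so that $D_p(\omega_s,\omega_t) = |t-s|\,\disp^Z(\pi) = 2|t-s|\,\dgwz(X,Y)$ by optimality of $\pi$; hence
\[
\dgwz(\gamma(s),\gamma(t)) \le |t-s|\,\dgwz(X,Y).
\]
By \cite[Lemma 1.3]{chowdhury2018explicit}, this upper bound together with the correct endpoints forces equality throughout, so $\gamma$ is a geodesic. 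A pleasant feature is that this last computation relies only on the pointwise scaling and \Cref{lem: lp comparison}, both of which hold verbatim for $p=\infty$ (with essential suprema replacing integrals), so the argument handles all $p \in [1,\infty]$ uniformly---in contrast to the path-connectivity and contractibility results, which genuinely fail at $p=\infty$.
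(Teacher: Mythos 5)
Your proposal is correct and follows essentially the same route as the paper: interpolate the kernels along $Z$-geodesics over the product space $(X\times Y,\pi)$ for an optimal coupling $\pi$, then bound $\dgwz(\gamma(s),\gamma(t))$ via the diagonal coupling to get the $|t-s|$ scaling. The one place you go beyond the paper is in justifying a \emph{measurable} geodesic selection via Kuratowski--Ryll-Nardzewski; the paper simply ``chooses'' a geodesic for each pair of kernel values without addressing measurability, so your added care there is a genuine (and welcome) tightening rather than a departure.
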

\begin{proof}
    Suppose that $(Z,d_Z)$ is geodesic. Let $X_i = (X_i,\omega_i,\mu_i) \in \M^{Z,p}$ for $i \in \{0,1\}$ and let $\pi$ be an optimal coupling realizing $\dgwz(X_0,X_1)$. For $t \in [0,1]$, let $X_t \in \M^{Z,p}$ be the $Z$-network
    \[
        X_t = (X_0 \times X_1, \omega_t, \pi),
    \]
    with $\omega_t:(X_0 \times X_1) \times (X_0 \times X_1) \to Z$ defined as follows. For $(x_0,x_1),(x_0',x_1') \in X_0 \times X_1$, choose
    \[
        \omega_\bullet ((x_0,x_1),(x_0',x_1')): [0,1] \to Z: t \mapsto \omega_t ((x_0,x_1),(x_0',x_1'))
    \]
    to be a geodesic joining $\omega_0(x_0,x_0')$ to $\omega_1(x_1,x_1')$ in $Z$; in particular, $\omega_i((x_0,x_1),(x_0',x_1')) = \omega_i(x_i,x_i')$ for $i \in \{0,1\}$. It is enough to prove that $\dgwz(X_s,X_t)\leq |s-t|\dgwz(X_0,X_1)$ for any $s,t\in [0,1]$.

    Let $X = X_0\times X_1$ and construct a coupling of $\pi$ and $\pi$ on $X\times X$ by $\Delta_* \pi $ where $\Delta: X\to X\times X$ is the standard diagonal map, $\Delta(x)=(x,x)$. Then, for any $s,t\in [0,1]$, and for $p \in [1,\infty)$,
    \begin{align}
        &2^p\dgwz(X_s, X_t)^p \\
        &\quad \leq \int_{X\times X}\int_{X\times X}d_Z(\omega_s(x,x'), \omega_t(y,y'))^p\Delta_*\pi(dx\times dy)\Delta_*\pi(dx'\times dy')                      \\ &\quad= \int_{X}\int_{X}d_Z(\omega_s(x,x'), \omega_t(x,x'))^p\pi(dx) \pi(dx') \label{eqn:geodesic_proof_1} \\
                          &\quad = |s-t|^p\int_{X}\int_{X}d_Z(\omega_0(x,x'),\omega_1(x,x'))^p \pi(dx)\pi(dx')   \label{eqn:geodesic_proof_2}                                                        \\
                          &\quad =|s-t|^p\int_{X_0\times X_1}\int_{X_0\times X_1}d_Z(\omega_0(x_0,x_0'),\omega_1(x_1,x_1'))^p \pi(dx_0\times dx_1)\pi(dx_0'\times dx_1') \label{eqn:geodesic_proof_3} \\
                          &\quad = |s-t|^p 2^p\dgwz(X_0,X_1)^p. \label{eqn:geodesic_proof_4}
    \end{align}
    The argument uses the change of variable formula in \eqref{eqn:geodesic_proof_1}, that $\omega_t$ is a geodesic in \eqref{eqn:geodesic_proof_2}, the definition of $\omega_0$ and $\omega_1$ in \eqref{eqn:geodesic_proof_3} and that $\pi$ is an optimal coupling in \eqref{eqn:geodesic_proof_4}. The $p=\infty$ case follows from the observation that the proof can be rewritten in terms of $L^p$ norms, in which case it applies directly.
\end{proof}

It is currently an open question if we have the geodesic property of $\MZp$ when $Z$ is not geodesic. Although not a counterexample, the following example partially explains how the Gromov-Wasserstein distance for a discrete $Z$ behaves.
\begin{example}
    Fix $p=1$. Let $Z=\{0,1\}$ with a discrete metric and consider $Z$-networks $X_0=([0,1],\omega_0,\mathscr{L})$ and $X_1= ([0,1],\omega_1,\mathscr{L})$ where $\omega_0$ and $\omega_1$ are constant functions returning the values $0$ and $1$, respectively. The path $X_t = ([0,1],\omega_t,\mathscr{L})$ where 
    \begin{equation}
        \omega_{t}(u,v) = \begin{cases}
            0, & v\leq t \\ 1, & \textrm{otherwise}
        \end{cases}
    \end{equation}
    defines a geodesic between $X_0$ and $X_1$ because, for $0\leq s\leq t\leq 1$, by \Cref{lem: lp comparison}, we have $\mathrm{GW}_1^Z(X_s,X_t) \leq \frac{1}{2} D_1(\omega_s,\omega_t)$, and
    \begin{align}
        D_1(\omega_s,\omega_t) =\int_{0}^{1}\int_{0}^{1}d_Z(\omega_0(u,v),\omega_1(u,v))dudv =\int_{s}^{t}\int_{0}^{1}dudv = t-s
    \end{align}
    Now, since $\omega_0,\omega_1$ are constant, $X_0$ and $X_1$ are weakly isomorphic to $(\{0\},\bar{\omega}_0,\delta_{0})$ and $(\{1\},\bar{\omega}_1,\delta_{1})$, respectively, where $\bar{\omega}_0,\bar{\omega}_1$ here are again the constant function with values $0,1$, respectively. As the product measure is the only coupling between Diracs, we can explicitly calculate $\mathrm{GW}_1^Z(X_0, X_1)=\frac{1}{2}$. Combining together with the previous result, we have $\mathrm{GW}_1^Z(X_s,X_t)\leq (t-s)\mathrm{GW}_1^Z(X_0,X_1)$, which is sufficient to show that $X_t$ is a geodesic.
\end{example}
As we can see from this example, although $Z=\{0,1\}$ is discrete, we can find a geodesic connecting between $X_0$ and $X_1$. Moreover, it is crucial that $p=1$ because of the following.
\begin{proposition}
Let $Z=\{0,1\}$ endowed with the discrete metric. Then, $\MZp$ is not geodesic for $p>1$.   
\end{proposition}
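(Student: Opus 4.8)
The plan is to reuse the two one-point networks from the preceding example and show that no geodesic can join them. Recall $X_0 = (\{0\},\bar\omega_0,\delta_0)$ and $X_1 = (\{1\},\bar\omega_1,\delta_1)$ with $\bar\omega_0\equiv 0$, $\bar\omega_1\equiv 1$, for which $\dgwz(X_0,X_1)=\tfrac12$. The invariant I would track along any candidate geodesic is the mass of ``$1$-valued'' edges, $m(X):=(\mu_X\otimes\mu_X)(\omega_X^{-1}(\{1\}))$. Since $d_Z$ is the discrete metric, $m$ is well defined on weak-isomorphism classes (measure-preserving pullbacks preserve it) and equals $\mathrm{size}_{p,z}(X)^p$ for the reference point $z=0$; here $m(X_0)=0$ and $m(X_1)=1$.

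The crux is a lower bound strictly stronger than the generic size estimate \eqref{eqn:size_bound}, namely
\[
\dgwz(X,Y)\ \ge\ \tfrac12\,|m(X)-m(Y)|^{1/p}.
\]
To establish it, fix $\pi\in\mathcal{C}(\mu_X,\mu_Y)$ and use that $d_Z$ is $\{0,1\}$-valued, so $d_Z^p=d_Z=\mathbf{1}[\,\cdot\neq\cdot\,]$ and $\disp^Z(\pi)^p$ is simply the $(\pi\otimes\pi)$-mass of $\{\omega_X(x,x')\neq\omega_Y(y,y')\}$. Pushing $\pi\otimes\pi$ forward under $(x,y,x',y')\mapsto(\omega_X(x,x'),\omega_Y(y,y'))$ produces a coupling $\eta$ on $\{0,1\}^2$ whose marginals are $(\omega_X)_\ast(\mu_X\otimes\mu_X)$ and $(\omega_Y)_\ast(\mu_Y\otimes\mu_Y)$, i.e.\ Bernoulli measures with parameters $m(X)$ and $m(Y)$. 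Then $\disp^Z(\pi)^p=\eta(\{(0,1)\})+\eta(\{(1,0)\})\ge|m(X)-m(Y)|$ by the elementary fact that $\mathbb{P}(U\neq V)\ge|\mathbb{P}(U{=}1)-\mathbb{P}(V{=}1)|$ for any coupling of two Bernoulli variables; taking the infimum over $\pi$ and a $p$-th root yields the bound.

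With this bound, I would argue by contradiction: suppose a geodesic $\gamma:[0,1]\to\MZp$ joins $X_0$ to $X_1$ and write $m(t):=m(\gamma(t))$, so $m(0)=0$, $m(1)=1$. The geodesic identity $\dgwz(\gamma(s),\gamma(t))=\tfrac12|s-t|$ and the bound above force $|m(s)-m(t)|\le|s-t|^p$ for all $s,t$. Evaluating at the midpoint and using the triangle inequality,
\[
1=|m(1)-m(0)|\le|m(1)-m(\tfrac12)|+|m(\tfrac12)-m(0)|\le 2\big(\tfrac12\big)^p=2^{1-p},
\]
which is $<1$ precisely when $p>1$, a contradiction. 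Hence $X_0$ and $X_1$ admit no geodesic, and $\MZp$ is not geodesic for $p>1$.

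The main obstacle — and the only nonroutine point — is the need for the sharpened exponent. The generic size bound \eqref{eqn:size_bound} gives only $\dgwz(X,Y)\ge\tfrac12|m(X)^{1/p}-m(Y)^{1/p}|$, which merely makes $t\mapsto m(t)^{1/p}$ be $1$-Lipschitz and is therefore perfectly consistent with the existence of a geodesic; it cannot produce a contradiction. The improvement to $\tfrac12|m(X)-m(Y)|^{1/p}$ is exactly what the pushforward-to-a-Bernoulli-coupling computation buys, and it is what makes the midpoint estimate collapse for $p>1$. Once that bound is in place, the rest is immediate.
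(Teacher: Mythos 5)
Your proof is correct, and its skeleton is the same as the paper's: both arguments track the quantity $m(X)=(\mu_X\otimes\mu_X)(\omega_X^{-1}(\{1\}))$ between the two one-point networks and a putative midpoint, and both land on the contradiction $1\le 2^{1-p}$. Where you differ is in how the key inequality is obtained. The paper exploits the fact that the endpoints carry Dirac measures, so the only coupling with a candidate midpoint $M$ is the product measure; this yields the \emph{exact} values $\dgwz(X_0,M)=\tfrac12\,m(M)^{1/p}$ and $\dgwz(X_1,M)=\tfrac12\,(1-m(M))^{1/p}$, and the midpoint equations are then immediately incompatible for $p>1$. You instead establish the general lower bound $\dgwz(X,Y)\ge\tfrac12|m(X)-m(Y)|^{1/p}$ via the pushforward-to-a-Bernoulli-coupling computation; this is precisely the second lower bound (SLB) of \Cref{thm:lower_bounds} specialized to $Z=\{0,1\}$, where the Wasserstein $p$-distance between Bernoulli$(a)$ and Bernoulli$(b)$ equals $|a-b|^{1/p}$, so you could have cited that result rather than re-deriving it. Your route buys a reusable quantitative estimate valid for arbitrary pairs of $\{0,1\}$-networks, and your observation that the generic size bound \eqref{eqn:size_bound} is too weak to force a contradiction is a genuinely useful remark; the paper's route is shorter because the exact computation against Dirac endpoints renders any lower bound unnecessary. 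Both arguments are complete.
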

\begin{proof}
    Consider $Z$-networks $X=(\{0\},\omega_0, \delta_0), Y= (\{0\}, \omega_1, \delta_0)$ where $\omega_0, \omega_1$ are the functions with $\omega_0(0,0)=0$ and $\omega_1(0,0)=1$.  Suppose that $\MZp$ is geodesic. Then, there is a \emph{midpoint} between them~\citep[Lemma 2.4.8]{burago2001course}. That is, there exists a $Z$-network $M= (M,\omega_M, \mu_M)$ such that
    \begin{equation}
        \dgwz(X,M)=\dgwz(Y,M)=\frac{1}{2}\dgwz(X,Y).
    \end{equation}
We will now calculate each term in the equation. Since the only coupling between $\delta_0$ and $\mu_M$ is the product measure $\delta_0\otimes \mu_M$, it is automatically the optimal coupling, and we can calculate $\dgwz(X,M)$ as
\begin{equation}
    \dgwz(X,M) = \frac{1}{2}\left(\int_{M}\int_{M}d_Z(0,\omega_M(m,m'))^p\mu_M(dm)\mu_M(dm')\right)^{1/p}
\end{equation}
Since $d_Z(0,\omega_M(m,m'))$ is nonzero if and only if $\omega_M(m,m')=1$, we have 
\begin{equation}
    \dgwz(X,M) =\frac{1}{2} \mu_M\otimes \mu_M(\omega_M^{-1}(\{1\}))^{1/p}
\end{equation}
Similarly, we have $\dgwz(Y,M)=\frac{1}{2}\mu_M\otimes \mu_M(\omega_M^{-1}(\{0\}))^{1/p}$ and $\dgwz(X,Y)=\frac{1}{2}$. Therefore, we obtain
\begin{equation}
    \frac{1}{2}\mu_M\otimes \mu_M(\omega_M^{-1}(\{1\}))^{1/p}=\frac{1}{2}\mu_M\otimes \mu_M(\omega_M^{-1}(\{0\}))^{1/p} = \frac{1}{4}
\end{equation}
This equation implies $1=\mu_M\otimes \mu_M(M\times M) = \mu_M\otimes \mu_M(\omega_M^{-1}(\{0\})) +\mu_M\otimes \mu_M(\omega_M^{-1}(\{1\})) = \frac{1}{2^{p-1}}$, but this is contradiction since $p>1$.
\end{proof}
These two results suggest that the geodesicity of $\dgwz$ differs between $p=1$ and $p>1$ cases. In fact, the Wasserstein distance has such a property: the Wasserstein $p$-space on any Polish metric space $Z$ is geodesic if $p=1$, but geodesicity depends on the underlying metric if $p>1$ \citep[Theorem 3.16, Remark 3.18]{MEMOLI2023102006}. Considering this result, we conclude this subsection by posing the following questions:

\begin{question}\label{question:always_geodesic}
For $p=1$, is $\MZp$ geodesic for any Polish metric space $Z$?
\end{question}

\begin{question}\label{question:converse_geodesic}
For $p>1$,  does $\MZp$ being geodesic imply that $Z$ is geodesic?
\end{question}

\section{Lower Bounds and Computational Aspects}\label{sec:approximations}

As discussed by  \citet{memoli2011} and \citet{chowdhury2019gromov}, the exact computation of the Gromov-Wasserstein distance is equivalent to solving non-convex quadratic programming, which is NP-hard. However, we can still obtain computationally tractable lower bounds on the distance through \textbf{invariants} of $Z$-networks. An invariant of a $Z$-network is defined in terms of some map $\phi:\MZp \to (I,d_I)$, where $(I,d_I)$ is a relatively simple pseudometric space, with the property that $\dgwz(X,Y) = 0$ implies $\phi(X) = \phi(Y)$. The associated invariant of $X$ is then $\phi(X)$. Example target spaces $(I,d_I)$ include the real line, the Wasserstein space, or $\MZp$ with simpler pseudometrics defined by optimal transport problems. Through such invariants, we will be able to transform the problem of estimating the GW distance into the problem of calculating the distance in $(I, d_I)$.

In this section, we will generalize previous computational results on invariants of the standard Gromov-Wasserstein distance to our framework. We also provide a new result about approximating the $Z$-GW distance, for arbitrary $Z$, by the $\mathbb{R}^n$-GW distance.

\subsection{Hierarchy of Lower Bounds}
Following the previous works (\citealt[Section 3]{chowdhury2019gromov}; \citealt[Section 6]{memoli2007}; \citealt[Section 6]{memoli2011}), we consider the lower bounds and invariants of the $Z$-network GW distance. We first define the generalizations of invariants given by \citet[Section 3]{chowdhury2019gromov}.

\begin{definition}[$Z$-Network Invariants]\label{def:invariants}
    Let $X=(X,\omega_X,\mu_X)$ and $Y=(Y, \omega_Y,\mu_Y)$ be  $(Z,p)$-networks. We define the following invariants:
    \begin{enumerate}
        \item The \define{size} of $X$ given a base point $z_0\in Z$ is
              \begin{equation}
                \label{def:size}
                  \textnormal{size}_{p,z_0}(X) = \| d_Z(\omega_{X}(\cdot,\cdot),z_0)\|_{L^p(\mu_X\otimes \mu_X)}
              \end{equation}
        \item The \define{outgoing joint eccentricity function} $\eccout : X\times Y \to \mathbb{R}_{+}$ is defined by
              \begin{equation}
                  \eccout(x,y) = \inf_{\pi\in \mathcal{C}(\mu_X,\mu_Y)}\|d_Z(\omega_{X}(x,\cdot),\omega_Y(y,\cdot))\|_{L^p(\pi)}
              \end{equation}
        \item The \define{incoming joint eccentricity function} $\eccin : X\times Y \to \mathbb{R}_{+}$ is defined by
              \begin{equation}
                  \eccin(x,y) = \inf_{\pi\in \mathcal{C}(\mu_X,\mu_Y)}\|d_Z(\omega_{X}(\cdot,x),\omega_Y(\cdot,y))\|_{L^p(\pi)}
              \end{equation}
        \item The \define{outgoing eccentricity} of $X$ given a base point $z_0$ is
              \begin{equation}
                  \eccoutpX(x) = \|d_Z(\omega_{X}(x,\cdot),z_0)\|_{L^p(\mu_X)}
              \end{equation}
        \item The \define{incoming eccentricity} of $X$ given a base point $z_0$ is
              \begin{equation}
                  \eccinpX(x) = \|d_Z(\omega_{X}(\cdot,x),z_0)\|_{L^p(\mu_X)}
              \end{equation}
    \end{enumerate}
\end{definition}
We note that many of the invariants are defined for a base point $z_0\in Z$. For the case $Z=\mathbb{R}$ considered by \citet{chowdhury2019gromov}, the base point was implicitly chosen to be $0$. However, in the general case, there is no canonical choice of the base point. One of the major results by \citet{chowdhury2019gromov} was the hierarchy of lower bounds for the GW distance for $\mathbb{R}$-networks. We generalize this result to the $Z$-network case along with the definitions of the lower bounds.

\begin{theorem}[Hierarchy of lower bounds]
    \label{thm:lower_bounds}
    Let $X=(X,\omega_X,\mu_X), Y=(Y,\omega_Y,\mu_Y)$ be $Z$-networks. We fix $z_0\in Z$ and define a cost function $C:X\times Y\to \mathbb{R}_{+}$ by $C(x,y) = W_p(\omega_X(x,\cdot)_*\mu_X, \omega_Y(y,\cdot)_* \mu_Y)$. Then we have the following, for $p\in [1,\infty]$ and $z_0\in Z$:
    \begin{align}
        \dgwz(X,Y) & \geq \frac{1}{2}\inf_{\pi \in \mathcal{C}(\mu_X,\mu_Y)} \|\eccout\|_{L^p(\pi)} \tag{TLB} \label{ineq:TLB}                                         \\ &=\frac{1}{2}\inf_{\pi \in \mathcal{C}(\mu_X,\mu_Y)}\|C\|_{L^p(\pi)} \tag{$Z$-TLB} \label{ineq: Z-TLB} \\ &\geq \frac{1}{2}\inf_{\pi \in \mathcal{C}(\mu_X,\mu_Y)}\|\eccoutpX -\eccoutpY\|_{L^p(\pi)} \tag{FLB} \label{ineq:FLB} \\
                   & = \frac{1}{2}\mathrm{W}_p((\eccoutpX)_*\mu_X, (\eccoutpY)_*\mu_Y) \tag{$Z$-FLB} \label{ineq: Z-FLB}                                                        \\
                   & \geq \frac{1}{2}|\textnormal{size}_{p,z_0}(X)-\textnormal{size}_{p,z_0}(Y)| \tag{SzLB}    \label{ineq:SzLB}                                       \\
        \dgwz(X,Y) & \geq \frac{1}{2}\inf_{\pi \in \mathcal{C}(\mu_X\otimes \mu_X,\mu_Y\otimes \mu_Y)}\|d_Z(\omega_X,\omega_Y)\|_{L^p(\pi)} \tag{SLB} \label{ineq:SLB} \\ &= \frac{1}{2}\mathrm{W}_p((\omega_X)_*(\mu_X\otimes \mu_X), (\omega_Y)_*(\mu_Y\otimes \mu_Y)) \tag{$Z$-SLB} \label{ineq: Z-SLB}
    \end{align}
    and similar inequalities for the incoming eccentricity functions.
\end{theorem}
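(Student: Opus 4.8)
The plan is to reduce the entire hierarchy to three ingredients: a single \emph{pushforward identity} for optimal transport, the triangle inequality for $\mathrm{W}_p$, and elementary manipulations of the infimum defining $\dgwz$. The pushforward identity I would isolate first is the following: for measurable maps $f\colon X\to Z$ and $g\colon Y\to Z$ and probability measures $\mu,\nu$ on $X,Y$,
\[
\inf_{\pi\in\mathcal{C}(\mu,\nu)}\left\|d_Z(f(\cdot),g(\cdot))\right\|_{L^p(\pi)} = \mathrm{W}_p(f_*\mu,\,g_*\nu).
\]
The inequality ``$\geq$'' is immediate, since $(f\times g)_*\pi$ is a coupling of $f_*\mu$ and $g_*\nu$ whose cost equals that of $\pi$. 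For ``$\leq$'' I would take a coupling $\gamma$ of $f_*\mu$ and $g_*\nu$ and glue it (via the Gluing Lemma) with the graph couplings $(\mathrm{id}\times f)_*\mu$ and $(g\times\mathrm{id})_*\nu$ along the two $Z$-factors, then project to $X\times Y$; the resulting $\pi$ lies in $\mathcal{C}(\mu,\nu)$ and pushes forward to $\gamma$ because the graph couplings force $z=f(x)$ and $z'=g(y)$ almost surely. Taking $\gamma$ optimal yields ``$\leq$''. This single identity accounts for the three equalities Z-TLB, Z-FLB, and Z-SLB (applied respectively to $f=\omega_X(x,\cdot),g=\omega_Y(y,\cdot)$ on $(\mu_X,\mu_Y)$; to the real-valued $\eccoutpX,\eccoutpY$ with target $\R$; and to $\omega_X,\omega_Y$ on the product measures).

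With the identity in hand, I would traverse the hierarchy from the top. For $\dgwz\geq$ TLB, fix $\pi\in\mathcal{C}(\mu_X,\mu_Y)$, write $\disp^Z(\pi)^p$ as the iterated integral over $(x,y)$ and then $(x',y')$, and bound the inner integral below by $\eccout(x,y)^p$, using that $\pi$ is \emph{one} competitor in the infimum defining $\eccout$; integrating and taking the infimum over $\pi$ gives TLB, and the pushforward identity upgrades it to the equality TLB $=$ Z-TLB. For Z-TLB $\geq$ FLB I would observe that $\eccoutpX(x)=\mathrm{W}_p(\omega_X(x,\cdot)_*\mu_X,\delta_{z_0})$ (the only coupling with a Dirac being the product), so the triangle inequality for $\mathrm{W}_p$ gives the pointwise bound $C(x,y)=\eccout(x,y)\geq|\eccoutpX(x)-\eccoutpY(y)|$, which survives taking $L^p(\pi)$-norms and infima. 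Then FLB $=$ Z-FLB is the pushforward identity on $\R$, and for Z-FLB $\geq$ SzLB I would note $\size(X)=\|\eccoutpX\|_{L^p(\mu_X)}=\mathrm{W}_p((\eccoutpX)_*\mu_X,\delta_0)$ and apply the triangle inequality for $\mathrm{W}_p$ once more.

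The second branch is shorter: for $\dgwz\geq$ SLB, the key observation is that for any $\pi\in\mathcal{C}(\mu_X,\mu_Y)$, the product $\pi\otimes\pi$, reindexed as a measure on $(X\times X)\times(Y\times Y)$, is a coupling of $\mu_X\otimes\mu_X$ and $\mu_Y\otimes\mu_Y$. Hence the infimum defining SLB ranges over a family of couplings that includes every $\pi\otimes\pi$, giving the inequality directly; SLB $=$ Z-SLB is once again the pushforward identity, now with $f=\omega_X,g=\omega_Y$ on the product measures. The stated inequalities for the incoming eccentricities are identical after replacing $\omega_X(x,\cdot),\omega_Y(y,\cdot)$ with $\omega_X(\cdot,x),\omega_Y(\cdot,y)$.

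I expect the main obstacle to be the rigorous justification of the pushforward identity, specifically its ``$\leq$'' direction: the gluing construction must be arranged so that the lifted plan $\pi$ genuinely pushes forward to $\gamma$, which requires care with iterated applications of the Gluing Lemma and the almost-sure identifications $z=f(x)$, $z'=g(y)$. A secondary technical point is measurability of $\eccout$ and $C$ on $X\times Y$ (needed for the $L^p(\pi)$-norms to be well-defined), which should follow from separability of $Z$. Finally, the $p=\infty$ case would be handled throughout by replacing $L^p$-integrals with essential suprema; since every step is phrased in terms of norms and the triangle inequality, the arguments transfer verbatim.
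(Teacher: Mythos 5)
Your proposal is correct and traverses the hierarchy in the same order as the paper, but it proves the central pushforward identity by a genuinely different route. The paper isolates this identity as its Lemma 28 (that $T_*\mathcal{C}(\mu_X,\mu_Y)=\mathcal{C}(f_*\mu_X,g_*\mu_Y)$ for $T=(f,g)$) and proves the hard inclusion by lifting a coupling through the surjective Borel map $\bar T(x,y)=(x,f(x),g(y),y)$ using Varadarajan's lemma on analytic sets---an argument the authors introduce precisely to repair a gap in the original proof from the measure-network literature. Your double-gluing construction (glue $\gamma$ with the graph couplings $(\mathrm{id}\times f)_*\mu$ and $(g\times\mathrm{id})_*\nu$ along the two $Z$-marginals, then project to $X\times Y$) achieves the same conclusion with only the Gluing Lemma: the resulting four-fold measure is concentrated on $\{z=f(x)\}\cap\{z'=g(y)\}$, so its $(z,z')$-marginal $\gamma$ coincides with $(f\times g)_*\pi$. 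This is more elementary and sidesteps the descriptive-set-theoretic machinery entirely; what the paper's approach buys is the slightly stronger set-level statement $T_*\mathcal{C}(\mu_X,\mu_Y)=\mathcal{C}(f_*\mu_X,g_*\mu_Y)$ in one package, though your construction in fact yields the same equality of sets. The remaining steps are equivalent to the paper's up to cosmetic reformulation: where the paper applies the pointwise reverse triangle inequality for $d_Z$ followed by the reverse Minkowski inequality to get \eqref{ineq:FLB} and \eqref{ineq:SzLB}, you apply the triangle inequality for $\mathrm{W}_p$ against the Dirac masses $\delta_{z_0}$ and $\delta_0$, using that the unique coupling with a Dirac is the product; these give the same pointwise bounds $\eccout(x,y)\geq|\eccoutpX(x)-\eccoutpY(y)|$ and $|\mathrm{size}_{p,z_0}(X)-\mathrm{size}_{p,z_0}(Y)|$. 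Your derivations of \eqref{ineq:TLB} (bounding the inner integral by the eccentricity since $\pi$ is one competitor) and of \eqref{ineq:SLB} (reindexing $\pi\otimes\pi$ as a coupling of the product measures) match the paper's. The measurability caveat you raise for $\eccout$ and $C$ is legitimate but is not addressed in the paper either, so it is not a gap relative to the published argument.
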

We note that the terminology FLB, SLB and TLB stands for ``first", ``second" and ``third" lower bound, respectively---this is in reference to the prior conventions \citep{memoli2011,chowdhury2019gromov}.
To prove this theorem, we follow the same strategy as \citet[Theorem 24]{chowdhury2019gromov}. Therefore we first introduce the following lemma, whose proof is provided in \Cref{app:lemma28}.

\begin{lemma}[{\citealt[Lemma 28]{chowdhury2019gromov}}]
\label{lem:lemma28}
    Let $X,Y,Z$ be Polish, and let $f:X\to Z$ and $g:Y\to Z$ be measurable. Let $T:X\times Y\to Z\times Z$ be the map $(x,y)\mapsto (f(x),g(y))$. Then we have:
    \begin{equation}
        T_* \mathcal{C}(\mu_X,\mu_Y) = \mathcal{C}(f_*\mu_X,g_*\mu_Y)
    \end{equation}
    Consequently,
    \begin{equation}
        \mathrm{W}_p(f_*\mu_X,g_*\mu_Y) = \inf_{\pi\in \mathcal{C}(\mu_X,\mu_Y)}\|d_Z(f,g)\|_{L^p(\pi)}
    \end{equation}
\end{lemma}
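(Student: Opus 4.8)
The plan is to first establish the set-level identity $T_\ast\,\mathcal{C}(\mu_X,\mu_Y) = \mathcal{C}(f_\ast\mu_X, g_\ast\mu_Y)$ and then deduce the Wasserstein identity as a formal consequence of the change-of-variables formula for pushforwards. The easy inclusion $T_\ast\,\mathcal{C}(\mu_X,\mu_Y) \subseteq \mathcal{C}(f_\ast\mu_X, g_\ast\mu_Y)$ is a marginal computation: writing $\mathrm{pr}_1,\mathrm{pr}_2$ for the coordinate projections on $Z\times Z$ and $\mathrm{pr}_X,\mathrm{pr}_Y$ for those on $X\times Y$, functoriality of the pushforward gives, for any $\pi\in\mathcal{C}(\mu_X,\mu_Y)$, the identity $(\mathrm{pr}_1)_\ast T_\ast\pi = (f\circ \mathrm{pr}_X)_\ast\pi = f_\ast\mu_X$, and symmetrically $(\mathrm{pr}_2)_\ast T_\ast\pi = g_\ast\mu_Y$, so that $T_\ast\pi$ is a coupling of $f_\ast\mu_X$ and $g_\ast\mu_Y$.

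The substantive direction is $\mathcal{C}(f_\ast\mu_X,g_\ast\mu_Y)\subseteq T_\ast\,\mathcal{C}(\mu_X,\mu_Y)$, for which I would invoke the disintegration theorem, which is available because $X,Y,Z$ are Polish. Disintegrating $\mu_X$ along $f$ and $\mu_Y$ along $g$ yields families $\{\mu_X^z\}_{z\in Z}$ and $\{\nu_Y^w\}_{w\in Z}$ of probability measures with $\mu_X^z$ concentrated on $f^{-1}(z)$ and $\nu_Y^w$ on $g^{-1}(w)$, satisfying $\mu_X = \int_Z \mu_X^z\,(f_\ast\mu_X)(dz)$ and $\mu_Y=\int_Z \nu_Y^w\,(g_\ast\mu_Y)(dw)$. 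Given $\lambda\in\mathcal{C}(f_\ast\mu_X,g_\ast\mu_Y)$, I would glue these fibres against $\lambda$ by setting $\pi := \int_{Z\times Z}\mu_X^z\otimes\nu_Y^w\,\lambda(dz\times dw)$. Its $X$-marginal collapses, after integrating out the $Y$-fibre, to $\int_{Z\times Z}\mu_X^z\,\lambda(dz\times dw) = \int_Z\mu_X^z\,(f_\ast\mu_X)(dz)=\mu_X$ using that the first marginal of $\lambda$ is $f_\ast\mu_X$; symmetrically the $Y$-marginal is $\mu_Y$, so $\pi\in\mathcal{C}(\mu_X,\mu_Y)$. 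To verify $T_\ast\pi=\lambda$ I would test on product sets $A\times B$: since $\mu_X^z$ lives on $f^{-1}(z)$ one has $\mu_X^z(f^{-1}(A))=\mathbf{1}_A(z)$ for $(f_\ast\mu_X)$-a.e.\ $z$, and likewise $\nu_Y^w(g^{-1}(B))=\mathbf{1}_B(w)$, whence $T_\ast\pi(A\times B)=\int_{Z\times Z}\mathbf{1}_A(z)\mathbf{1}_B(w)\,\lambda(dz\times dw)=\lambda(A\times B)$, and a monotone-class argument upgrades this to all Borel sets.

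With the set identity in hand, the Wasserstein formula is immediate. By definition $\mathrm{W}_p(f_\ast\mu_X,g_\ast\mu_Y)=\inf_{\lambda}\|d_Z\|_{L^p(\lambda)}$ over $\lambda\in\mathcal{C}(f_\ast\mu_X,g_\ast\mu_Y)$; substituting $\lambda=T_\ast\pi$ and applying the change-of-variables identity $\|d_Z\|_{L^p(T_\ast\pi)}=\|d_Z\circ T\|_{L^p(\pi)}=\|d_Z(f(\cdot),g(\cdot))\|_{L^p(\pi)}$ turns the infimum over $\lambda$ into one over $\pi\in\mathcal{C}(\mu_X,\mu_Y)$, yielding the stated identity (the factor $\tfrac12$ on the right-hand side of the lemma statement appears to be a typographical artifact, since its removal is exactly what makes $\eccout=C$ in the proof of \Cref{thm:lower_bounds}).

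I expect the main obstacle to be the rigorous handling of the disintegration in the $\supseteq$ inclusion: one must check the measurability of the maps $z\mapsto\mu_X^z$ and $w\mapsto\nu_Y^w$ and carefully justify the Fubini-type interchanges underlying the marginal computations and the evaluation of $T_\ast\pi$ on product sets. Once these measure-theoretic bookkeeping points are secured, the remainder of the argument is purely formal manipulation of pushforward measures.
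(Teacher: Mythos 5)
Your proof is correct, but it takes a genuinely different route from the one in the paper. For the nontrivial inclusion $\mathcal{C}(f_\ast\mu_X,g_\ast\mu_Y)\subseteq T_\ast\,\mathcal{C}(\mu_X,\mu_Y)$, the paper does not disintegrate: it invokes a lifting result of Varadarajan (\Cref{lem:varadarajan}), which says that for a surjective Borel map between analytic subsets of Polish spaces, every Borel probability measure on the target is a pushforward of one on the source. Since applying that directly to $T$ yields only a probability measure on $X\times Y$ with the right pushforward but no control on its marginals (this is exactly the gap in the original argument of Chowdhury--M\'emoli that the paper flags), the paper augments $T$ to $\bar{T}(x,y)=(x,f(x),g(y),y)$ and lifts $\mu_X\otimes\pi\otimes\mu_Y$, so that the coupling constraints are forced by the lifted identity. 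Your construction instead builds the preimage coupling explicitly: disintegrate $\mu_X$ along $f$ and $\mu_Y$ along $g$ and glue the fibres against the given $\lambda$, setting $\pi=\int_{Z\times Z}\mu_X^z\otimes\nu_Y^w\,\lambda(dz\times dw)$, exactly in the spirit of the standard gluing lemma. Both arguments lean on the Polish hypothesis (existence of regular conditional probabilities for you, properties of analytic sets for them), but yours has the advantage of being constructive and of making the marginal and pushforward verifications transparent: the identities $\mu_X^z(f^{-1}(A))=\mathbf{1}_A(z)$ for $f_\ast\mu_X$-a.e.\ $z$ and the monotone-class upgrade from rectangles are complete once the measurability of $z\mapsto\mu_X^z$ supplied by the disintegration theorem is cited, which is precisely the bookkeeping you identify. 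You are also right that the factor $\tfrac12$ in the displayed Wasserstein identity is spurious: the change of variables gives $\|d_Z\|_{L^p(T_\ast\pi)}=\|d_Z(f,g)\|_{L^p(\pi)}$ with no constant, and the way the lemma is used in the proof of \Cref{thm:lower_bounds} (to identify $\eccout$ with $C$ and \eqref{ineq:FLB} with \eqref{ineq: Z-FLB}) requires the version without the $\tfrac12$.
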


\begin{proof}of \Cref{thm:lower_bounds}.
    The inequality \eqref{ineq:TLB} is obtained through
    \begin{align}
        \dgwz(X,Y) &\geq \frac{1}{2}\inf_{\pi, \nu\in \mathcal{C}(\mu_X,\mu_Y)}\|d_Z(\omega_X,\omega_Y)\|_{L^p(\pi \otimes \nu)} \\ 
        &= \frac{1}{2}\inf_{\pi\in  \mathcal{C}(\mu_X,\mu_Y)}\inf_{\nu\in  \mathcal{C}(\mu_X,\mu_Y)}\|\|d_Z(\omega_X(x,\cdot),\omega_Y(y,\cdot))\|_{L^p(\nu)}\|_{L^p(\pi)} \\
        &\geq \frac{1}{2}\inf_{\pi\in  \mathcal{C}(\mu_X,\mu_Y)} \left\| \inf_{\nu\in  \mathcal{C}(\mu_X,\mu_Y)}\|d_Z(\omega_X(x,\cdot),\omega_Y(y,\cdot))\|_{L^p(\nu)}\right\|_{L^p(\pi)}\\
        &= \frac{1}{2}\inf_{\mu\in \mathcal{C}(\mu_X,\mu_Y)}\|\eccout\|_{L^p(\pi)}.
    \end{align}
    The equality \eqref{ineq:TLB} $=$ \eqref{ineq: Z-TLB} is obtained by applying \Cref{lem:lemma28}, by setting $f = \omega_X(x,\cdot), g = \omega_Y(y,\cdot)$. The inequality \eqref{ineq:TLB} $\geq$ \eqref{ineq:FLB} is obtained by applying the reverse triangle inequality 
    \[
    d_Z(\omega_X(x,\cdot), \omega_Y(x,\cdot)) \geq |d_Z(\omega_X(x,\cdot),z_0) - d_Z(\omega_Y(y,\cdot),z_0)|
    \]
    and the reverse Minkowski inequality 
    \[
    \|d_Z(\omega_X(x,\cdot),z_0) - d_Z(\omega_Y(y,\cdot),z_0)\|_{L^p(\pi)} \geq \left| \|d_Z(\omega_X(x,\cdot),z_0)\|_{L^p(\pi)}-\|d_Z(\omega_Y(y,\cdot),z_0)\|_{L^p(\pi)}\right|.
    \]
    The equality \eqref{ineq:FLB} $=$ \eqref{ineq: Z-FLB} is again obtained via \Cref{lem:lemma28} by setting $f = \eccoutpX, g=\eccoutpY$. The inequality \eqref{ineq:FLB} $\geq$ \eqref{ineq:SzLB} is obtained by another application of the reverse Minkowski inequality, 
    \[
    \|\eccoutpX-\eccoutpY\|_{L^p(\pi)} \geq \left|\|\eccoutpX\|_{L^p(\pi)}-\|\eccoutpY\|_{L^p(\pi)}\right|,
    \]
    and the fact that 
    \[
    \|\eccoutpX\|_{L^p(\pi)}=\|\eccoutpX\|_{L^p(\mu)}=\size(X).
    \]
    Finally, the inequality \eqref{ineq:SLB} is obtained by considering an optimal coupling $\pi \in \mathcal{C}(\mu_X,\mu_Y)$ and applying the following chain of inequalities for $\pi\otimes \pi\in \mathcal{C}(\mu_X\otimes \mu_X, \mu_Y\otimes \mu_Y)$:
    \begin{equation}
        \dgwz(X,Y) = \frac{1}{2}\|d_Z(\omega_X,\omega_Y)\|_{L^p(\pi\otimes\pi)} \geq \frac{1}{2}\inf_{\sigma \in \mathcal{C}(\mu_X\otimes \mu_X, \mu_Y\otimes \mu_Y)}\|d_Z(\omega_X,\omega_Y)\|_{L^p(\sigma)}
    \end{equation}
    and the equality \eqref{ineq:SLB} $=$ \eqref{ineq: Z-SLB} is obtained via \Cref{lem:lemma28} by setting $f = \omega_X, g = \omega_Y$.
\end{proof}

\subsection{Approximation of \texorpdfstring{$Z$}{Z}-Network GW Distances via \texorpdfstring{$\mathbb{R}^n$}{Rn}-Networks}
In the previous subsection, we note that \eqref{ineq:FLB} was obtained by the reverse triangle inequality $|d_Z(\omega_X,z_0)-d_Z(\omega_Y,z_0)|\leq d_Z(\omega_X,\omega_Y)$. This inequality can be seen as relating the $Z$-networks $(X, \omega_X, \mu_X)$ and $(Y,\omega_Y, \mu_Y)$ to the $\mathbb{R}$-networks $(X, d(\omega_X,z_0), \mu_X)$ and $(Y, d(\omega_X,z_0), \mu_Y)$, respectively. We will now generalize this idea to $\mathbb{R}^n$-networks.

\begin{theorem}
    \label{thm: approximation_Rn}
    Suppose $Z$ is a bounded metric space. Let $X=(X,\omega_X,\mu_X)$, $Y=(Y,\omega_Y,\mu_Y)$ be $Z$-networks. We fix $Q=(q_1,\ldots,q_n)\in Z^n$  and define $\mathbb{R}^n$-networks by 
    \[
    X_{Q}=(X,\omega_{X_Q},\mu_X) \quad \mbox{and} \quad Y_{Q}=(Y, \omega_{Y_Q},\mu_Y),
    \]
    where 
    \[
    \omega_{X_Q} = (d_Z(\omega_X,q_i))_{i=1}^{n} \quad \mbox{and} \quad  \omega_{Y_Q}=(d_Z(\omega_Y,q_i))_{i=1}^{n}.
    \]
    Then we obtain
    \begin{equation}
        n^{-1/r}\dgwRn(X_{Q},Y_{Q}) \leq \dgwz(X, Y) \leq \dgwRn(X_{Q}, Y_{Q}) + \textnormal{H}(Z,Q),
    \end{equation}
    where $\textnormal{H}(Z,Q)$ is the Hausdorff distance between $Z$ and $Q$, the distance on $\mathbb{R}^n$ is the $\ell^{r}$ distance with $r\in [1,\infty]$, and we set $n^{-1/\infty} = 1$. 
\end{theorem}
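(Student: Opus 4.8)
The plan is to reduce everything to two elementary pointwise comparisons between the $Z$-valued kernel distortion and the $\ell^r$-distortion of the feature-vector representation, and then to integrate. Write $\epsilon = \textnormal{H}(Z,Q)$ and, for $z \in Z$, set $F(z) = (d_Z(z,q_1),\dots,d_Z(z,q_n)) \in \mathbb{R}^n$, so that $\omega_{X_Q} = F \circ \omega_X$ and $\omega_{Y_Q} = F \circ \omega_Y$. The first observation I would record is that, since $X_Q$ and $X$ share the underlying Polish space and measure $\mu_X$ (and likewise $Y_Q$ and $Y$), both distances are optimized over the \emph{same} coupling set $\mathcal{C}(\mu_X,\mu_Y)$; it therefore suffices to compare the two integrands coupling-by-coupling. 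Boundedness of $Z$ guarantees both that $\omega_{X_Q},\omega_{Y_Q}$ are bounded, hence genuine $\mathbb{R}^n$-network kernels, and that $\epsilon < \infty$.

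The first step is the two pointwise bounds. For any $z,w \in Z$, the reverse triangle inequality gives $|d_Z(z,q_i)-d_Z(w,q_i)| \leq d_Z(z,w)$ for each $i$, so summing over $i$ and taking $r$-th roots yields $\|F(z)-F(w)\|_{\ell^r} \leq n^{1/r}\, d_Z(z,w)$ (the convention $n^{-1/\infty}=1$ covering $r=\infty$). Conversely, choosing an index $i$ with $d_Z(z,q_i) \leq \epsilon$ (possible since $Q$ is $\epsilon$-dense in $Z$), two applications of the triangle inequality give $d_Z(z,w) \leq d_Z(z,q_i) + d_Z(w,q_i) \leq 2\epsilon + |d_Z(w,q_i)-d_Z(z,q_i)| \leq 2\epsilon + \|F(z)-F(w)\|_{\ell^r}$, where the final step uses $\|\cdot\|_{\ell^\infty} \leq \|\cdot\|_{\ell^r}$.

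The second step is to integrate. Applying the first pointwise bound with $z=\omega_X(x,x')$ and $w=\omega_Y(y,y')$ and taking the $L^p(\pi\otimes\pi)$ norm shows $\disp^{\mathbb{R}^n}(\pi) \leq n^{1/r}\disp^Z(\pi)$ for every $\pi \in \mathcal{C}(\mu_X,\mu_Y)$; taking the infimum over $\pi$ and dividing by $2$ gives the left inequality $n^{-1/r}\dgwRn(X_Q,Y_Q) \leq \dgwz(X,Y)$. For the right inequality, applying the second pointwise bound and then Minkowski's inequality (the additive constant $2\epsilon$ contributing its $L^p(\pi\otimes\pi)$-norm, which equals $2\epsilon$ because $\pi\otimes\pi$ is a probability measure) yields $\disp^Z(\pi) \leq \disp^{\mathbb{R}^n}(\pi) + 2\epsilon$ for every $\pi$; multiplying by $\tfrac12$ and taking the infimum gives $\dgwz(X,Y) \leq \dgwRn(X_Q,Y_Q) + \epsilon$.

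The one point deserving care—and really the crux of the constant-tracking—is that the pointwise lower bound loses an additive $2\epsilon$, but this factor of two is cancelled exactly by the $\tfrac12$ normalization built into the definition of the $Z$-GW distance, leaving the clean error term $\textnormal{H}(Z,Q)$ rather than $2\,\textnormal{H}(Z,Q)$. Finally, the $p=\infty$ and $r=\infty$ cases are handled with no new ideas, by replacing the relevant $L^p$ integrals with essential suprema and the $\ell^r$ sums with maxima throughout.
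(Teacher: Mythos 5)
Your proposal is correct and follows essentially the same route as the paper: the same two pointwise comparisons (the reverse triangle inequality for the lower bound, and a $2\,\textnormal{H}(Z,Q)$-padded triangle inequality for the upper bound), integrated against an arbitrary coupling, with the factor of $2$ absorbed by the $\tfrac12$ in the GW normalization. The only cosmetic difference is that you treat general $r$ directly, whereas the paper proves the $r=\infty$ case and then invokes the norm equivalence $n^{-1/r}\|x\|_{\ell^r}\leq\|x\|_{\ell^\infty}\leq\|x\|_{\ell^r}$.
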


\begin{remark}
 Before proving the theorem, we provide a few remarks on the statement and its interpretation. 
 
\smallskip\noindent {\bf Generalization to norms other than $\ell^r$ norms.}  Since all norms on a finite-dimensional vector space are equivalent, \Cref{thm: approximation_Rn} can be generalized to the case where we equip $\mathbb{R}^n$ with a norm $\|\cdot\|$ other than the $\ell^r$ norm. The resulting inequality is as follows:
\begin{equation}
        A\cdot\dgwRn(X_{Q},Y_{Q}) \leq \dgwz(X, Y) \leq B\cdot\dgwRn(X_{Q}, Y_{Q}) + \textnormal{H}(Z,Q),
\end{equation}
Here, $A$ and $B$ are constants such that $A\|\cdot\|\leq \|\cdot\|_{\ell^{\infty}}\leq B\|\cdot\|$.

\smallskip
\noindent {\bf Necessity of compactness for practical approximation.}
Although \Cref{thm: approximation_Rn} is true assuming only the boundedness of $Z$, it is necessary to impose a stronger condition, such as compactness, to make the Hausdorff distance between $Z$ and $Q$  arbitrarily small as $n \to \infty$. 

\smallskip
\noindent {\bf The relation between the theorem and the Fr\'{e}chet-Kuratowski theorem.}
In \Cref{thm: approximation_Rn}, setting $r=\infty$ gives no constant factor to the GW distance, and thus the difference between the $\mathbb{R}^n$-GW distance and the $Z$-GW distance admits the following simple Hausdorff distance bound:
\begin{equation}
\label{eqn:gw_approx_H}
\big|\dgwRn(X_{Q},Y_{Q})-\dgwz(X, Y)\big|\leq \textnormal{H}(Z,Q).
\end{equation} 
The inequality shows that the approximation of $\dgwz(X,Y)$ by $\dgwz(X_Q,Y_Q)$ is as accurate as the approximation of $Z$ by $Q$. In other words, it establishes  \emph{consistency} and \emph{stability} of the approximation. This result is observed specifically in the case $r=\infty$, which can be understood through the Fr\'{e}chet-Kuratowski theorem \citep[Proposition 1.17]{Ostrovskii+2013}. This theorem states that any separable metric space isometrically embeds into $\ell^{\infty}(\mathbb{R})$, the space of bounded sequences on $\mathbb{R}$. The embedding $Z \hookrightarrow \ell^{\infty}(\mathbb{R})$ for bounded $Z$ is provided by the mapping $z\mapsto \{d_Z(z,z_i)\}_{i=1}^{\infty}$ where $\{z_i\}_{i=1}^{\infty}$ is any dense subset of $Z$. Consequently, $\omega_{X_Q}=\{d_Z(\omega_X,q_i)\}_{i=1}^{n}$ serves as an approximation of $\omega_X$, identified as $\{d_Z(\omega_X,z_i)\}_{i=1}^{\infty}$, which parallels the Hausdorff approximation of $Z$ by $Q$. On the other hand, for $r<\infty$, the mapping $z\mapsto \{d_Z(z,z_i)\}_{i=1}^{\infty}$ is not necessarily an isometric embedding. As a result, the approximation of $\omega_X$ by $\omega_{X_Q}$ no longer holds.

\smallskip
\noindent {\bf Error bounds for the GW distance approximation.}
Continuing in the $r=\infty$ case, by \eqref{eqn:gw_approx_H}, an error bound for the Hausdorff distance is automatically an error bound for the approximation of the GW distance.
In the compact case, previous works \citep[Theorem 34]{JMLR:v11:carlsson10a}, \citep[Theorem 2]{pmlr-v32-chazal14} show bounds on the convergence rate to 0 of the Hausdorff distance $\textnormal{H}(Z, Q)$ as $n\to\infty$ when $Q=\{q_1,\cdots, q_n\}$ is a random sample from $Z$ (with respect to a measure satisfying certain technical conditions).
\end{remark}

\begin{proof}of \Cref{thm: approximation_Rn}.
    We consider the case $r=\infty$, since other cases follow by using the equivalence of the norm $n^{-1/r}\|x\|_{\ell^r}\leq \|x\|_{\ell^{\infty}} \leq \|x\|_{\ell^r}$. We first prove the lower bound $\dgwRn(X_{Q},Y_{Q}) \leq \dgwz(X, Y)$. Since $|d_Z(\omega_X,q_i)-d_Z(\omega_Y,q_i)|\leq d_Z(\omega_X,\omega_Y)$ for any $i$, we obtain 
    \[
    \|\omega_{X_Q}-\omega_{Y_Q}\|_{\ell^{\infty}} = \max_{i}|d_Z(\omega_X,q_i)-d_Z(\omega_Y,q_i)|\leq d_Z(\omega_X,\omega_Y).
    \]
    Therefore, we obtain the lower bound by integrating both sides against a coupling $\pi\in\mathcal{C}(\mu_X,\mu_Y)$ and taking the infimum in terms of $\pi$. For the upper bound, notice that, for any $z,z'\in Z$, we have  
    \begin{equation}
        d_Z(z,z') \leq \max_{i}|d_Z(z, q_i)-d_Z(z',q_i)| + 2\sup_{z}\min_{i}d_Z(z,q_i)
    \end{equation}
    To verify this, we assume that $d_Z(z,q_i)\geq d_Z(z',q_i)$ without loss of generality because of symmetry. Then, we have
    \begin{equation}
        \label{eqn:ZGWapprox_intermediate}
        d_Z(z,z') \leq |d_Z(z, q_i)-d_Z(z',q_i)| + 2d_Z(z',q_i)
    \end{equation}
    because the right hand side is equal to $d_Z(z,q_i)-d_Z(z',q_i)+2d_Z(z',q_i) = d_Z(z,q_i)+d_Z(z',q_i)$. By the triangle inequality, we obtain the inequality. Thus, taking the maximum in terms of $i$, we have
    \begin{equation}
        d_Z(z,z') \leq \max_{i}|d_Z(z, q_i)-d_Z(z',q_i)| + 2d_Z(z',q_i)
    \end{equation}
    Since the above is true for any $i$, we obtain
    \begin{equation}
        d_Z(z,z') \leq \max_{i}|d_Z(z, q_i)-d_Z(z',q_i)| + 2\min_{i}d_Z(z',q_i).
    \end{equation}
    Taking the supremum in terms of $z'$,
    \begin{equation}
        d_Z(z,z') \leq \max_{i}|d_Z(z, q_i)-d_Z(z',q_i)| + 2\sup_{z'}\min_{i}d_Z(z',q_i),
    \end{equation}
    which is equivalent to \eqref{eqn:ZGWapprox_intermediate}.
    We briefly point out that the boundedness assumption on $Z$ is used here to ensure that $\sup_{z'}\min_{i}d_Z(z',q_i)$  is finite. In addition, the second term $\sup_{z'}\min_{i}d_Z(z',q_i)=\sup_{z}d_Z(z,Q)$ can be bounded by the Hausdorff distance $\textnormal{H}(Z,Q)=\max\{\sup_{z}d_Z(z,Q), \sup_{i}d_Z(q_i,Z)\}$. Now, substituting $z=\omega_X, z'=\omega_Y$, we obtain
    \begin{equation}
        d_Z(\omega_X,\omega_Y) \leq \max_{i}|d_Z(\omega_X, q_i)-d_Z(\omega_Y,q_i)| + 2\textnormal{H}(Z,Q).
    \end{equation}
    For $\pi \in \mathcal{C}(\mu_X,\mu_Y)$, this yields
    \begin{align}
        \|d_Z(\omega_X,\omega_Y)\|_{L^p(\pi\otimes \pi)} & \leq \left\|\max_{i}|d_Z(\omega_X, q_i)-d_Z(\omega_Y,q_i)|+ 2\mathrm{H}(Z,Q)\right\|_{L^p(\pi\otimes \pi)} \\
                                                       & \leq \left\|\max_{i}|d_Z(\omega_X, q_i)-d_Z(\omega_Y,q_i)|\right\|_{L^p(\pi\otimes \pi)} +2\textnormal{H}(Z,Q),
    \end{align}
    where the last inequality follows from Minkowski's inequality and the fact that $2\textnormal{H}(Z,Q)$ is constant.
    Dividing both sides by $2$ and taking the infimum in terms of $\pi$, we obtain the proposition.
\end{proof}

\subsection{Numerical Algorithm}\label{sec:algorithm}
Although the focus of the paper is on theoretical aspects of the $Z$-GW distance, let us briefly detour to sketch a numerical scheme which adapts existing algorithms. 

We start by introducing some notation. Namely, for a 4-dimensional array $(L_{ijkl})_{ijkl}$ and a matrix $(T_{ij})_{ij}$,  we define the product $L\otimes T$ via
\begin{equation}
    L \otimes T = \left(\sum_{kl}L_{ijkl}T_{kl}\right)_{ij}.
\end{equation}
Next, we consider two finite $Z$-networks $(\{1,\cdots, m\},\omega_X,\mu_X)$ and $(\{1,\cdots, n\},\omega_Y,\mu_Y)$. In other words, $\omega_X ,\omega_Y$ are $Z$-valued $m\times m$ and $n\times n$ matrices, respectively, and $\mu_X$, $\mu_Y$ are $m$-dimensional and $n$-dimensional probability vectors,  respectively.
To numerically estimate an optimal coupling $T$---that is, an $\mathbb{R}$-valued $m\times n$ matrix---we perform the following iterations:
\begin{align}
T^{(l+1)} &= \min_{T\in \mathcal{C}(\mu_X,\mu_Y)}
\langle d_Z(\omega_X,\omega_Y)^p\otimes T^{(l)}, T\rangle - \epsilon H(T),\\
\text{where } d_Z(\omega_X,\omega_Y)_{ijkl} &= d_Z\big(\omega_X(i,k), \omega_Y(j,l)\big), \quad
H(T) = -\sum_{i,j} T_{ij}\big(\log T_{ij}-1\big), \quad \epsilon>0 .
\end{align}
Here, for matrices $A,B$ of the same size, $\langle A,B\rangle = \sum_{ij}A_{ij}B_{ij}$. The main difference between applying this algorithm to estimate the $Z$-GW distance as compared to the classical GW distance is the calculation of the product $d_Z(\omega_X,\omega_Y)^p\otimes T^{(l)}$. If $Z=\mathbb{R}$, $d_Z(x,y)=|x-y|$, and $p=2$, the numerical routine of~\cite{peyre2016gromov} reduces the time complexity of the product calculation to $O(n^2m+m^2n)$. In general, the naive calculation of the product costs $O(n^2m^2)$ in time. We also need $O(n^2m^2)$ space to store the 4-dimensional array $d_Z(\omega_X,\omega_Y)$. To alleviate this issue, we can use the approximation by $\mathbb{R}^n$-networks proven in \Cref{thm: approximation_Rn}. In particular, setting $p=r=2$ recovers the situation in \cite{peyre2016gromov} and allows an $O(N(n^2m+m^2n))$ time algorithm where $N$ is the size of $Q$ in Theorem \ref{thm: approximation_Rn}. This is still a cubic-time algorithm, but we believe that a linear-time algorithm is possible by using the Sampled GW algorithm, cf.~\cite{Kerdoncuff2021SampledGW}. We defer addressing these issues and developing a comprehensive numerical framework to future work.

\section{Discussion}\label{sec:discussion}

The $Z$-Gromov-Wasserstein framework defines a very general setting for reasoning about the mathematical properties of the rich and varied GW-like distances which have appeared in the recent literature. These properties have frequently been derived independently in each instance; in contrast, the work here provides a high-level unified perspective, showing that the metric properties of a $Z$-GW distance are closely related to those of the space $Z$. This initial paper on the topic is meant to lay out its foundations, and we envision several directions for future research, both theoretical and applications-oriented in nature. We conclude the paper by outlining some important future goals:

\smallskip
\noindent{\it Geodesic structure and curvature bounds.} The geodesic structure of the $Z$-GW space $\MZp$, and how it relates to that of $Z$, has so far only been  partially characterized in \Cref{thm:geodesic}. Answers to the related open questions, \Cref{question:always_geodesic} and \Cref{question:converse_geodesic}, would provide a more complete characterization.  Moreover, it would be interesting to give conditions under which the geodesics of $Z$-GW distance are always of the form described explicitly in the proof of~\Cref{thm:geodesic}. For example, a result of this form is demonstrated by Sturm for the standard GW distance by~\citet{sturm2023space}, and this is generalized to certain GW variants by~\citet{vayer2020fused,chowdhury2023hypergraph,zhang2024geometry,zhang2024topological}. Such a characterization would be a first step in establishing bounds on the Alexandrov curvature of the $Z$-GW space. Understanding the relationship between the curvature of the space~$Z$ and that of the $Z$-GW space would be very interesting from a theoretical perspective, and could have practical implications to algorithm design. Indeed, a main motivation for the  curvature bounds established by~\citet{sturm2023space} in the standard GW setting was to enable the application of general tools from Alexandrov geometry to study gradient flows of certain functionals on the space of metric measure spaces. On the practical side, curvature estimates can be used to inform algorithms for computing Fr\'{e}chet means of point clouds in a metric space---see, e.g.,~\citet{turner2014frechet,chowdhury2020gromov,zhang2024geometry} for the case of lower bounds, or~\citet{feragen2011means,bacak2014computing} for the case of upper bounds. Additionally, the realization of the $Z$-GW space as a quotient of an $L^p$-space,  described in \Cref{thm:quotient_rep_Lp}, suggests an alternative approach to the theory and numerical modeling of gradient flows in the $Z$-GW space. We plan to explore these ideas in future work.

\smallskip
\noindent{\it Topological Questions.} We showed in \Cref{thm:contractible} that, when $p < \infty$, the $Z$-GW space is contractible, perhaps surprisingly, independently of the topology of $Z$. However, the dependence of the topology of $\mathcal{M}_\sim^{Z,\infty}$ on that of $Z$ has not been resolved---see \Cref{question:path_connectivity} and \Cref{question:contractibility}. We show in \Cref{ex:no_path_continuity} and the ensuing discussion that if $Z$ is not path-connected, then neither is $\mathcal{M}_\sim^{Z,\infty}$, so the answer to these questions is potentially subtle. We remark here that the special case of $p=\infty$ is conceptually important---the distance $\mathrm{GW}_\infty^Z$ is essentially a notion of Gromov-Hausdorff distance for $Z$-valued networks, and Gromov-Hausdorff distance for $\R$-valued networks is already a rich area of study, with ties to areas such as topological data analysis~\citep{chowdhury2018functorial,chowdhury2023distances}. Other interesting topological questions remain open. For example, Gromov's celebrated \emph{precompactness theorem} (see~\citealt[Chapter 7]{burago2022course}) says that a family of metric spaces with certain uniform bounds on geometrical properties of its elements has compact closure in the Gromov-Hausdorff topology, and this is generalized to the Gromov-Wasserstein setting by~\citet[Theorem 5.3]{memoli2011}. A natural question is whether there are interesting precompact families for $Z$-networks in the $Z$-GW topology.

\smallskip
\noindent{\it Structure of Optimal Couplings.} There has recently been significant interest in characterizing the structure of optimal couplings in the GW framework; in particular, several recent articles address the problem of finding general classes of measure networks under which the GW distance is realized by a measure-preserving map (e.g.,~\citealt{memoli2022comparison,delon2022gromov,sturm2023space,dumont2024existence,clark2024generalized}). The $Z$-GW framework gives a more general setting for addressing these questions. The added flexibility in the model might allow for more tractable versions of the problem to be attacked. 

\smallskip
\noindent{\it Computational Pipeline.} The focus of this paper is on the basic theory of $Z$-GW distances, but the results in~\Cref{sec:approximations} point toward more practical considerations. The lower bounds in~\Cref{thm:lower_bounds} are polynomial-time computable in the standard GW setting; the computational efficiency for some of these lower bounds relies on the fact that Wasserstein distances between distributions on the real line can be computed via an explicit formula. From an applications perspective, it would be useful to characterize other spaces $Z$ where the Wasserstein distance is efficiently and explicitly computable (e.g., the circle as done by~\citealt{rabin2011transportation}),  and to utilize this for efficient $Z$-GW distance lower bound computation. Another interesting line of research in this direction is to determine classes of $Z$-networks for which the lower bounds are \emph{injective} in the sense that vanishing of the lower bound implies that the input $Z$-networks are weakly isomorphic---this question is quite subtle in the standard GW setting~\citep{memoli2022distance}. Finally, an important consequence of~\Cref{thm: approximation_Rn} is that, for arbitrary $Z$, $Z$-GW distances can be estimated via solvers for the $\R^n$-GW distance, as we outlined in \Cref{sec:algorithm}. A serious implementation of this approximation scheme will be the subject of a follow-up paper.

\smallskip
\noindent {\it Applications.} The surplus of examples of $Z$-GW distances provided in~\Cref{sec:examples} suggests the wide applicability of this framework. Once the computational pipeline described in the previous paragraph is in place, we plan to apply it to various problems involving analysis of complex and non-standard data. We are particularly interested in exploring the more novel settings described in~\Cref{sec:other_examples}, such as probabilistic metric spaces, shape graphs and connection graphs. 

\acks{We would like to thank Samir Chowdhury for several enlightening discussions about the potential for $Z$-GW distances. T.N. was partially supported by NSF grants DMS--2107808, DMS--2324962 and CIF--2526630. F.M. acknowledges funding from the NSF under grants IIS--1901360, DMS--2301359 and CCF--2310412, and the BSF under grant 2020124. M.B. was partially supported by NSF grants DMS--2324962 and IIS--2426549 and the BSF under grant 2022076.}


\appendix

\section{Proofs of Technical Results}\label{sec:appendix}

In this section, we collect proofs of various technical results whose statements appeared in the main body of the paper. 

\subsection{Proof of \texorpdfstring{\Cref{prop: lp complete_separable}}{Proposition \ref{prop: lp complete_separable}}}\label{app:lp complete_separable}

    \begin{enumerate}[leftmargin=*]
        \item The proof of this part of the proposition can be found in  \citet[Section 1.1]{Korevaar1993SobolevSA}. 
        \item The main idea of the proof is due to \citet{majerMO}. By the theorem of Banach \citep[Chapter XI, $\S$8, Theorem 10]{banach1987theory}, $Y$ can be isometrically embedded onto a subset of a separable Banach space $C([0,1])$ of real-valued continuous functions on $[0,1]$ with the sup-norm. By Pettis measurability theorem \citep[Theorem 1.1.6]{hytonen2016analysis}, all measurable functions with values in $Y$ are strongly measurable by separability of $Y$, so $L^p(X,\mu_X;Y)$ is a subset (or a closed subspace) of the Lebesgue-Bochner space $L^p(X,\mu_X;C([0,1]))$. Since $X$ is separable, its Borel $\sigma$-algebra is countably generated, so $L^p(X,\mu_X;C([0,1]))$ is separable \citep[Theorem 1.2.29]{hytonen2016analysis}. Therefore, $L^p(X,\mu_X;Y)$ is separable.
        \item Let $\epsilon>0$ be given. We first consider the case when $f$ is bounded. By Lusin's theorem, there is a compact set $K\subset X$ such that $\mathscr{L}^d(X\setminus K)<\epsilon$ and $f$ is continuous on $K$. Since $f$ is continuous on a compact set $K$, it is uniformly continuous on $K$. Therefore, there exists $\delta$ such that for any $x,y\in K$, $\|x-y\|_{\mathbb{R}^d}<\delta$ implies $d_Y(f(x),f(y))<\epsilon$. Now, let $h>0$ be small enough so that the maximal distance of each hypercube in the grid is smaller than $\delta>0$. Then, on each cube, take an arbitrary value of $f$ and define a piecewise constant function $g$ that is constant on each cube. Then, for any $x\in K$, $f,g$ satisfies $d_Y(f(x),g(x))<\epsilon$ so that $\sup_{x\in K}d_Y(f(x),g(x))\leq \epsilon$. Now, since $f$ is bounded, there exists $M>0$ such that $d(f(x),f(x'))\leq M$ for any $x,x'\in X$. Therefore,
              \begin{align}
                  D_p(f,g)^p & = \int_{X}d_Y(f(x),g(x))^p\mathscr{L}^d(dx) \\
                  &= \int_{K}d_Y(f(x),g(x))^p\mathscr{L}^d(dx)  + \int_{X\setminus K}d_Y(f(x),g(x))^p\mathscr{L}^d(dx) \\ &\leq \epsilon^p\mathscr{L}^d (K) + M^p\mathscr{L}^d (X\setminus K) \leq \epsilon^p + M^p\epsilon
              \end{align}
              Since $\epsilon$ is arbitrary, we have the proposition for the bounded $f$. We will now consider unbounded $f$. Fix $y_0\in Z$ and define a function $f_n$ by
              \begin{equation}
                  f_n(x) = \begin{cases}
                      f(x) & \textnormal{if } d_Y(f(x),z_0)\leq n \\
                      z_0  & \textnormal{otherwise}
                  \end{cases}
              \end{equation}
              Then, $f_n(x)\to f(x)$ as $n\to \infty$ for any $x\in [0,1]^d$. By definition, we have $d_Y(f_n(x),z_0) \leq d_Y(f(x),z_0)$ so that $d_Y(f_n(x),f(x))^p \leq 2^p d_Y(f(x),z_0)^p$ by triangle inequality, and the $L^p$ assumption on $f$ allows us to dominate the function $d_Y(f_n(x),f(x))^p$ by an integrable function $2^p d_Y(f(x), z_0)$. Therefore, by Lebesgue's dominated convergence theorem, we have $D_p(f_n,f)\to 0$ as $n\to \infty$. Thus, taking a bounded function $f_n$ such that $D_p(f_n,f)<\epsilon/2$ and a piecewise constant function $g$ such that $D_p(f_n,g)<\epsilon/2$, we have $D_p(f,g)<\epsilon$.
    \end{enumerate}
    \hfill $\blacksquare$

\subsection{Proof of \texorpdfstring{\Cref{lem: santam1.8general}}{Lemma \ref{lem: santam1.8general}}}\label{app: santam1.8general}

    We start from the case when $c$ is bounded. Suppose $0\leq c\leq M$ for some $M\geq 0$. We note that $X\times X$ and $Y\times Y$ are again Polish and applying Lusin's theorem to $a$ and $b$, we observe that there are compact sets $K_X\subset X\times X$ and $K_Y\subset Y\times Y$ such that $\mu\otimes \mu(X\times X \setminus K_X)<\delta$ and $\nu\otimes \nu(Y\times Y \setminus K_Y)<\delta$. Now, consider the homeomorphism $s:X\times X\times Y\times Y\ni (x,x',y,y') \mapsto (x,y,x',y')\in X\times Y \times X\times Y$ and the set $K = s(K_X\times K_Y)$. By using $s$, we see that $c(a,b)$, seen as a function on $X\times Y\times X\times Y$, is continuous on $K$. Using that $c$ satisfies $c\leq M$,
    \begin{align}
        \int_{X\times Y}\int_{X\times Y}c(a,b)d(\gamma_n\otimes \gamma_n) & \leq  \int_{(X\times Y)^2}1_{K}\cdot c(a,b)d(\gamma_n\otimes \gamma_n) + \int_{(X\times Y)^2}1_{K^c}\cdot Md(\gamma_n\otimes \gamma_n) \\  &=\int_{(X\times Y)^2}1_{K}\cdot c(a,b)d(\gamma_n\otimes \gamma_n) +M \gamma_n\otimes \gamma_n(K^c)
    \end{align}
    By definition of $K$ and $s$, we have
    \begin{align}
        \gamma_n\otimes \gamma_n(K^c) & = \gamma_n\otimes \gamma_n(s((K_X\times K_Y)^c)) \\ &=\gamma_n\otimes \gamma_n(s([K_X^c\times (Y\times Y)]\cup [(X\times X)\times K_Y^c] )) \\ &= \gamma_n\otimes \gamma_n (s(K_X^c\times (Y\times Y))\cup s((X\times X) \times K_Y^c)) \\ &\leq \gamma_n\otimes \gamma_n(s(K_X^c\times (Y\times Y))) + \gamma_n\otimes \gamma_n (s((X\times X)\times K_Y^c))
    \end{align}
    We note that $(x,y,x',y')\in s(K_X^c\times (Y\times Y))$ if and only if $(x,x')\in K_X^c$, so we have $1_{s(K_X^c\times (Y\times Y))}=1_{K_X^c}(x,x')$ for any $y,y'\in Y$. Therefore,
    \begin{align}
        \gamma_n\otimes \gamma_n(s(K_X^c\times (Y\times Y))) & = \int_{X\times Y}\int_{X\times Y} 1_{K_X^c}(x,x')d(\gamma_n\otimes \gamma_n) \\
                                                             & = \int_{X}\int_{X} 1_{K_X^c}(x,x')d(\mu\otimes \mu)                            \\
                                                             & = \mu\otimes \mu(K_X^c) < \delta
    \end{align}
    We can argue similarly for $K_Y^c$. Thus,
    \begin{align}
        \int_{X\times Y}\int_{X\times Y}c(a,b)d(\gamma_n\otimes \gamma_n) \leq \int_{(X\times Y)^2}1_{K}\cdot c(a,b)d(\gamma_n\otimes \gamma_n) + 2M\delta
    \end{align}
    We note that $1_{K}\cdot c$ is an upper semicontinuous function on $X\times Y$ because it is a nonnegative product of two upper semicontinuous functions. $1_K$ is upper semicontinuous because $K$ is closed. Now recall that the weak convergence $m_k\to m$ of probability measures is equivalent to $\int fm \geq \limsup \int f m_k$ for any upper semicontinuous function $f$ bounded from above. As $1_K c$ satisfies this constraint, using the fact that $\gamma_n \otimes \gamma_n \to \gamma \otimes \gamma $ weakly, we have
    \begin{align}
        \limsup_{n}\int_{X\times Y}\int_{X\times Y}c(a,b)d(\gamma_n\otimes \gamma_n) & \leq  \limsup_{n}\int_{X\times Y}\int_{X\times Y}1_{K}c(a,b)d(\gamma_n\otimes \gamma_n) +2M\delta \\ &\leq \int_{X\times Y}\int_{X\times Y}1_{K}c(a,b)d(\gamma\otimes \gamma) +2M\delta\\ &\leq \int_{X\times Y}\int_{X\times Y}c(a,b)d(\gamma\otimes \gamma) + 2M \delta
    \end{align}
    Since $\delta$ is arbitrary, we have that
    \begin{equation}
        \limsup_{n}\int_{X\times Y}\int_{X\times Y}c(a,b)d(\gamma_n\otimes \gamma_n) \leq \int_{X\times Y}\int_{X\times Y}c(a,b)d(\gamma\otimes \gamma)
    \end{equation}
    implying that the integral functional is upper semicontinuous. We can apply the same argument to $M-c$ to obtain that it is also lower semicontinuous. Overall, we have that the functional is continuous with respect to the weak convergence when $c$ is bounded. Now, we will consider the case when $c$ is unbounded. We can approximate the integral functional by the supremum of continuous functionals by replacing $c$ by monotone sequence $c_n = \min(c,n)$, so the functional is lower semicontinuous. For the upper semicontinuity, the assumption that $c(a,b)\leq f(a)+g(b)$ with $\int (f\circ a)d(\mu\otimes \mu), \int (g\circ b)d(\mu\otimes \mu)<+\infty$ allows us to replace $c$ by $f(a)+g(b)-c(a,b)\geq 0$ and argue the same. Therefore, the continuity is proven for unbounded cases. 
    \hfill $\blacksquare$

\subsection{Proof of \texorpdfstring{\Cref{lem:one_point_Holder}}{Lemma \ref{lem:one_point_Holder}}}\label{app:one_point_Holder}

    Observe that there is a unique coupling $\pi$ between $\mu_X$ and $\delta_\star$, characterized by
    \[
    \pi(dx \times d\star) = \mu_X(dx).
    \]
    The integrals $I_{\pi \pi}$, $I_{X\pi}$ and $I_{\pi Y}$ in \eqref{eqn:proof_of_continuity_paths} can then be computed explicitly by applying the definitions:
    \begin{align}
        I_{\pi \pi} &= \iint_{(X \amalg \{\star\})^4} d_Z(\omega_X \amalg \omega_Y(u,u'), \omega_X \amalg \omega_Y(v,v'))^p \pi(du \times dv) \pi(du' \times dv') \\
        &= \iint_{(X \times \{\star\})^2} d_Z(\omega_X \amalg \omega_Y(x,x'), \omega_X \amalg \omega_Y(\star,\star))^p \pi(dx \times d\star) \pi(dx' \times d\star) \\
        &= \iint_{X \times X} d_Z(\omega_X(x,x'),z)^p \mu_X(dx)\mu_X(dx') = \mathrm{size}_{p,z}(X)^p,
    \end{align}
    \begin{align}
        I_{X\pi} &= \iint_{(X \amalg \{\star\})^4} d_Z(\omega_X \amalg \omega_Y(u,u'), \omega_X \amalg \omega_Y(v,v'))^p \Delta^X_\ast \mu_X(du \times dv) \pi(du' \times dv') \\
        &= \int_{X \times \{\star\}} \int_X d_Z(\omega_X \amalg \omega_Y(x,x'), \omega_X \amalg \omega_Y(x,\star))^p \mu_X(dx) \pi(dx' \times d\star) \\
        &= \iint_{X \times X} d_Z(\omega_X(x,x'), z)^p \mu_X(dx) \mu_X(dx') = \mathrm{size}_{p,z}(X)^p,
    \end{align}
    and, similarly,
    \begin{align}
       I_{\pi Y} &=  \iint_{(X \amalg \{\star\})^4} d_Z(\omega_X \amalg \omega_Y(u,u'), \omega_X \amalg \omega_Y(v,v'))^p \pi(du \times dv) \Delta^Y_\ast \mu_Y(du' \times dv') \\
       &= \int_{X \times \{\star\}} \int_{\{\star\}} d_Z(\omega_X \amalg \omega_Y(x,\star), \omega_X \amalg \omega_Y(\star,\star))^p \pi(dx \times d\star) \delta_\star(d\star) \\
       &= \int_{X \times \{\star\}} \int_{\{\star\}} d_Z(z,z)^p \pi(dx \times d\star) = 0.
    \end{align}
    The estimate \eqref{eqn:proof_of_continuity_paths} therefore becomes
    \[
    2^p \dgwz(X_s,X_t)^p \leq |t-s| \cdot 3 \cdot \mathrm{size}_{p,z}(X)^p,
    \]
    and this can be rewritten as \eqref{eqn:Holder_estimate_one_point}.
    \hfill $\blacksquare$

\subsection{Proof of \texorpdfstring{\Cref{lem:lemma28}}{Lemma \ref{lem:lemma28}}}\label{app:lemma28}

To prove the lemma,  we use the following result regarding \define{analytic sets}, i.e., continuous images of Polish spaces. This result can be found in \citep[Lemma 2.2]{Varadarajan1963Groups} or \citep[Lemma 27]{chowdhury2019gromov}

\begin{lemma}
    \label{lem:varadarajan}
    Let $X, Y$ be analytic subsets of Polish spaces equipped with the relative Borel $\sigma$-fields. Let $f:X\to Y$ be a surjective, Borel-measurable map. Then for any $\nu\in \textnormal{Prob}(Y)$, there exists $\mu\in \textnormal{Prob}(X)$ such that $\nu=f_*\mu$. Here, $\textnormal{Prob}(X)$ is the set of Borel probability measures on $X$.
\end{lemma}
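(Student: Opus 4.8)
The plan is to prove the lemma by a measurable-selection argument: construct a right inverse (section) of $f$ and push $\nu$ forward through it. First I would consider the graph of $f$, with coordinates swapped, namely
\[
B = \{(y,x) \in Y \times X : f(x) = y\}.
\]
Writing $X \subseteq \hat{X}$ and $Y \subseteq \hat{Y}$ for the ambient Polish spaces, the function $(x,y) \mapsto d_{\hat{Y}}(f(x),y)$ is Borel (composition of the Borel map $f$ with the continuous metric), so $B$ is Borel in $X \times Y$; being the intersection of a Borel set with the analytic set $X \times Y$, it is itself analytic as a subset of the Polish product $\hat{Y} \times \hat{X}$. Surjectivity of $f$ is used precisely to guarantee that the projection $\mathrm{proj}_{\hat{Y}}(B) = f(X) = Y$ is all of $Y$.

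The first key step is to \emph{uniformize} $B$ over $Y$. For this I would invoke the Jankov–von Neumann uniformization theorem (see, e.g., Kechris, \emph{Classical Descriptive Set Theory}, Thm.\ 18.1): since $B$ is analytic and projects onto $Y$, there is a $\sigma(\boldsymbol{\Sigma}^1_1)$-measurable---in particular universally measurable---map $g : Y \to X$ whose graph lies in $B$, i.e.\ $f(g(y)) = y$ for every $y \in Y$. Thus $g$ is a genuine section of $f$, but only universally measurable rather than Borel; this gap is the main obstacle, since an arbitrary Borel surjection need not admit a Borel section.

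The second step converts $g$ into a map that transports a Borel measure to a Borel measure. Fix $\nu \in \mathrm{Prob}(Y)$; then $g$ is in particular $\nu$-measurable, and since $X$ is separable and metrizable, a standard fact (a $\nu$-measurable map into a second-countable space agrees $\nu$-a.e.\ with a Borel map, via Lusin's theorem and a countable-cover approximation) yields a Borel map $\tilde{g} : Y \to X$ with $g = \tilde{g}$ $\nu$-almost everywhere; adjusting $\tilde{g}$ on a null set if necessary, we may assume it takes values in $X$ everywhere. I would then define $\mu := \tilde{g}_\ast \nu \in \mathrm{Prob}(X)$, a bona fide Borel probability measure.

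Finally I would verify $f_\ast \mu = \nu$. Because $f \circ g = \mathrm{id}_Y$ holds everywhere and $g = \tilde{g}$ $\nu$-a.e., we have $f \circ \tilde{g} = \mathrm{id}_Y$ $\nu$-a.e., whence
\[
f_\ast \mu = f_\ast(\tilde{g}_\ast \nu) = (f \circ \tilde{g})_\ast \nu = (\mathrm{id}_Y)_\ast \nu = \nu,
\]
using that two maps agreeing $\nu$-a.e.\ induce the same pushforward. I expect the selection step to be the crux: the descent to \emph{universal} measurability through Jankov–von Neumann, followed by the $\nu$-dependent a.e.\ Borel modification, is exactly what forces the conclusion to be stated existentially for each fixed $\nu$ rather than as a single global Borel section.
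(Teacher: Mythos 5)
The paper does not actually prove this lemma --- it is imported verbatim from Varadarajan (Lemma 2.2) and Chowdhury--M\'{e}moli (Lemma 27) and used as a black box in the proof of \Cref{lem:lemma28} --- so there is no internal proof to compare against. Your argument is correct, and it is essentially the classical proof of this selection result: the reversed graph $B$ is analytic (a relatively Borel subset of the analytic set $X \times \hat{Y}$ is the trace of a Borel set, and Borel $\cap$ analytic is analytic), Jankov--von Neumann uniformization produces a $\sigma(\boldsymbol{\Sigma}^1_1)$-measurable section $g$ with $f \circ g = \mathrm{id}_Y$, and surjectivity of $f$ is exactly what makes the projection of $B$ all of $Y$. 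The two genuine subtleties are handled properly: first, analytic sets are universally measurable, so $g$ is measurable for the $\nu$-completion of the relative Borel $\sigma$-field (one extends $\nu$ to $\hat{Y}$ by pushing forward along the inclusion, which makes the trace argument go through even though $Y$ need not be Borel); second, the $\nu$-a.e.\ Borel modification $\tilde{g}$ exists because $X$ is second countable, and the disagreement set $\{y : f(\tilde{g}(y)) \neq y\}$ is Borel (preimage of the complement of the diagonal under the Borel map $y \mapsto (f(\tilde{g}(y)), y)$) and $\nu$-null, so $f_\ast \tilde{g}_\ast \nu = \nu$ on Borel sets without any issue about non-measurable subsets of null sets. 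Your closing remark is also apt: the passage through universal measurability and the $\nu$-dependent modification is precisely why the conclusion is existential for each fixed $\nu$ rather than furnishing a single global Borel section, which in general does not exist for Borel surjections.
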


\begin{proof}of \Cref{lem:lemma28}.
    The $\subset$ direction is standard \citep{chowdhury2019gromov}. For the opposite direction, we briefly point out that the proof by \citet{chowdhury2019gromov} contains a gap; to fix it, take a coupling $\pi \in \mathcal{C}(f_*\mu_X, g_*\mu_Y)$, and our goal is to find a measure $\sigma\in \mathcal{C}(\mu_X, \mu_Y)$ such that $T_* \sigma = \pi$. The strategy by \citet{chowdhury2019gromov} was to apply \Cref{lem:varadarajan} directly to the mapping $T$ so that we have a probability measure $\sigma$ such that $\pi=T_*\sigma$, but the issue here is that \Cref{lem:varadarajan} only ensures that $\sigma$ is a probability measure and not necessarily a coupling. Indeed, if $f,g$ are constant maps, $\mathcal{C}(f_*\mu_X, g_*\mu_Y)$ only consists of a Dirac measure, and importantly, any probability measure $\sigma$ (not necessarily a coupling) on $X\times Y$ satisfies $\pi=T_* \sigma$ for the element $\pi\in\mathcal{C}(f_*\mu_X, g_*\mu_Y)$. To impose an additional restriction that $\sigma$ is a coupling, we consider the augmented $T$ mapping $\bar{T}:X\times Y \to X\times Z\times Z\times Y$,
    \begin{equation}
    \bar{T}(x,y)=(\pi_{X}(x,y),T(x,y), \pi_{Y}(x,y)) = (x,f(x),g(y),y)
    \end{equation}
    where $\pi_{X}:X\times Y\to X$ is the projection onto the first component and $\pi_{Y}$ similarly. Intuitively, we added the projection functions $\pi_{X},\pi_{Y}$ so that the condition $\mu_{X}\otimes \pi\otimes \mu_{Y}=\bar{T}_{*}\sigma =(\pi_{X}, T, \pi_{Y})_* \sigma$ we will obtain from \Cref{lem:varadarajan} induces $(\pi_{X})_* \sigma=\mu_X, (\pi_{Y})_* \sigma = \mu_Y$, i.e., the coupling condition.

    Since $X,Y,Z$ are Polish, $X\times Y$ and $X\times Z\times Z\times Y$ are again Polish, and $\bar{T}$ can be made surjective by restricting the range to $\bar{T}(X\times Y)=X\times T(X\times Y)\times Y$. The measurability of $f,g$ implies $\bar{T}$ is measurable, so we apply \Cref{lem:varadarajan} to the measure $\mu_X\otimes \pi|_{T(X\times Y)}\otimes \mu_Y$ where $\pi|_{T(X\times Y)}(A)=\pi(A\cap T(X\times Y))$ and the mapping $\bar{T}$ to obtain a probability measure $\sigma$ on $X\times Y$ such that $\mu_X\otimes \pi|_{T(X\times Y)}\otimes \mu_Y=\bar{T}_* \sigma$. $\sigma$ belongs to $\mathcal{C}(\mu_X,\mu_Y)$ because, by definition, for any measurable $A\subset X$, we have 
    \begin{equation}
    \bar{T}_*\sigma(A\times T(X\times Y)\times Y)=\sigma(A\times Y) = \mu_X(A)\pi|_{T(X\times Y)}(T(X\times Y))\mu_Y(Y)=\mu_X(A)
    \end{equation}
     and similarly for $Y$. Here, the first equality comes from the definition of $\bar{T}^*$ and pushforward, the second is the condition $\mu_X\otimes \pi|_{T(X\times Y)}\otimes \mu_Y=\bar{T}_* \sigma$, and the last one is $\mu_{Y}(Y)=1$ and that $\pi|_{T(X\times Y)}(T(X\times Y)) = \pi(T(X\times Y))=\pi(f(X)\times g(Y))=1$ since $\pi \in \mathcal{C}(f_*\mu_X, g_*\mu_Y)$. 
     
     Now, we have $\pi|_{T(X\times Y)}=T_* \sigma$ because, for any $B\subset T(X\times Y)$, we have $\bar{T}_*\sigma(X\times B\times Y) = T_* \sigma(B) = \pi|_{T(X\times Y)}(B)$. Finally, notice that $\pi(B\cap T(X\times Y)) = \pi(B)$ for any measurable $B\subset Z\times Z$, so $T_*\sigma = \pi|_{T(X\times Y)} = \pi$.
\end{proof}

\vskip 0.2in
\bibliography{bib}

\end{document}